\documentclass[10pt]{article}

\usepackage{amssymb,amsthm,amsmath}
\usepackage{mathtools}
\usepackage[thmtools-compat]{keytheorems}

\usepackage[margin=1in]{geometry}

\usepackage{hyperref}
\usepackage[english]{babel}
\usepackage[style=numeric, giveninits=true, maxbibnames=99]{biblatex}
\usepackage{csquotes}

\usepackage{lmodern}
\usepackage{enumitem}

\usepackage{tikz}
    \usetikzlibrary{shapes.geometric, arrows, calc, positioning}
    \usepackage{varwidth}
    \tikzstyle{startstop} = [rectangle, rounded corners, draw=black, text centered, inner sep=0.5em]
    \tikzstyle{process} = [rectangle, draw=black, text centered, inner sep=0.5em]
    \tikzstyle{longprocess} = [rectangle, draw=black, text centered, text width=4cm, inner sep=0.5em]
    \tikzstyle{io} = [trapezium, 
    trapezium stretches=true, trapezium left angle=70, trapezium right angle=110, draw=black, text centered, inner sep=0.5em]
    \tikzstyle{longio} = [trapezium, 
    trapezium stretches=true, trapezium left angle=70, trapezium right angle=110, draw=black, text centered, text width=3cm, inner sep=0.5em]
    \tikzstyle{decision} = [diamond, aspect=2, draw=black, text centered, inner sep=0.2em]
    \tikzstyle{longdecision} = [diamond, aspect=2, draw=black, text centered, text width=3cm, inner sep=0.2em]
    \tikzstyle{arrow} = [thick,->,>=stealth]
    \usepackage{pgfplots}
    \pgfplotsset{compat=1.18}
    
\usepackage{accsupp} % this allows alttext for images

\usepackage{todonotes}

\newcommand\RR{\mathbb{R}}
\newcommand\ZZ{\mathbb{Z}}
\newcommand\NN{\mathbb{N}}
\newcommand{\cA}{{\cal A}}
\newcommand{\cF}{{\cal F}}
\newcommand{\cG}{{\cal G}}

\newcommand{\Prob}[1]{{\mathbf P}\!\left[#1\right]}
\newcommand{\EV}[1]{{\mathbf E}\!\left[#1\right]}

\DeclareMathOperator{\diam}{diam}
\DeclareMathOperator{\diag}{diag}
\DeclareMathOperator{\supp}{supp}
\DeclareMathOperator{\tr}{tr}
\DeclarePairedDelimiter{\abs}{|}{|}
\DeclarePairedDelimiter{\norm}{\|}{\|}
\DeclarePairedDelimiter{\angleb}{\langle}{\rangle}

\makeatletter
\newcommand*\wt[2][0.1ex]{%
        \begingroup
        \mathchoice{\wt@helper{#1}{#2}{\displaystyle}{\textfont}}
                   {\wt@helper{#1}{#2}{\textstyle}{\textfont}}
                   {\wt@helper{#1}{#2}{\scriptstyle}{\scriptfont}}
                   {\wt@helper{#1}{#2}{\scriptscriptstyle}{\scriptscriptfont}}%
        \endgroup
        #2%
}
\newcommand*\wt@helper[4]{%
        \def\currentfont{\the#41}%
        \def\currentskewchar{\char\the\skewchar\currentfont}%
        \setbox\tw@\hbox{\currentfont#2\currentskewchar}%
        \dimen@ii\wd\tw@
        \setbox\tw@\hbox{\currentfont#2{}\currentskewchar}%
        \advance\dimen@ii-\wd\tw@
        \rlap{\raisebox{-#1}{$\m@th#3\kern\dimen@ii\widetilde{\phantom{#2}}$}}%
}
\makeatother
\renewcommand{\tilde}[1]{\smash{\wt{#1}}}

\newcommand*{\1}{\text{\usefont{U}{bbold}{m}{n}1}}
\newcommand{\pn}[1]{\textsf{#1}}

\title{On the diameter and distribution of Sidon sets}
\date{2026}
\author{
Daniel~Carter\thanks{Princeton University, \href{mailto:dc65@princeton.edu}{\tt dc65@princeton.edu}.},\quad
Kevin~O'Bryant\thanks{City University of New York, College of Staten Island and The Graduate Center, \href{mailto:kevin.obryant@csi.cuny.edu}{\tt kevin.obryant@csi.cuny.edu}.},\quad
Terence~Tao\thanks{University of California, Los Angeles, \href{mailto:tao@math.ucla.edu}{\tt tao@math.ucla.edu}.}}

\declaretheorem[numberwithin=section]{theorem}
\declaretheorem[numbered=no, name=Theorem]{theorem*}

\declaretheorem[numbered=no, name=Claim]{claim*}
\declaretheorem{claim}

\declaretheorem[numberlike=theorem]{lemma,proposition,problem,corollary,conjecture,question}
\declaretheorem[numberlike=theorem, style=definition, qed=$\lozenge$]{definition,example,remark}

\begin{document}
\maketitle
\begin{abstract}
A Sidon set is a set $\cA$ of integers with the property that if $0\neq a-b=c-d$ and $a,b,c,d\in\cA$, then $(a,b)=(c,d)$. 
Our main result is an improved and explicit bound on the diameter of a Sidon set: if $\cA$ is a Sidon set with $k\ge 1$ elements, then
  \[\diam(\cA) \ge k^2 \left(\frac{1}{1 + 0.945/\sqrt{k}}-\frac{0.945}{\sqrt k}\right) -1;\]
in particular, $\diam(\cA)\ge k^2-1.89 k^{3/2}$. This implies that if $\cA\subseteq[1,N]$ is a Sidon set, then $\abs{\cA} \le N^{1/2}+0.945 \, N^{1/4}+0.447$. Our technique also gives an improved bound for the case of large $k$, namely $\diam(\cA) \ge k^2-1.88607 k^{3/2}$ for $k\ge 483$. A $\gamma$-Golomb ruler is a set $\cA$ of integers with the property that there are not $\gamma+1$ distinct pairs $(a_i,b_i)\in\cA^2$ with $0\neq a_1-b_1=\cdots=a_{\gamma+1}-b_{\gamma+1}$. Our diameter bounds extend in a natural way to the case of Golomb rulers. Our techniques can also be adapted to address a number of questions on the distribution of Golomb rulers: we provide upper bounds on the two quantities $\abs{a_\ell-a_1 -\frac{\ell-1}{k-1}(a_k-a_1)}$ and $\max_{1\le i<k} (a_{i+1}-a_i)$, where $\cA=\{a_1<\dots<a_k\}$ is a $\gamma$-Golomb ruler with $\diam(\cA)\le (k^2-bk^{3/2})/\gamma$ (where $b\in \RR$ is a free parameter).
\end{abstract}

\section{Introduction}\label{sec:introduction}
\subsection{Sidon sets}
A Sidon set is a set $\cA$ of integers with the property that if $a,b,c,d\in\cA$ and $a+b=c+d$, then $\{a,b\}=\{c,d\}$. An equivalent condition is that if $a,b,c,d\in\cA$ and $a-b=c-d$, then either $a-b=c-d=0$ or $(a,b)=(c,d)$. Sidon sets (also known as Golomb rulers, $1$-Golomb rulers, $B_2$-sets, and Babcock sets) have a vast literature: the annotated bibliography~\cite{2004.Obryant} has almost 200 works, and a new version will appear shortly with over 400. 

The fundamental problem is to bound the diameter in terms of the cardinality of finite Sidon sets. Our headline result is the following explicit bound:
\begin{corollary}[The minimum diameter of a Sidon set]
\label{cor:Sidon}
  If $\cA$ is a Sidon set with $k\ge 2$ elements, then
    \[\diam(\cA) \ge k^2\left(\frac{1}{1+0.945/\sqrt k}-\frac{0.945}{\sqrt k}\right)-1 \ge k^2-1.89\,k^{3/2}.\]
  If $\cA\subseteq[1,N]$ is a Sidon set, then
	\[|\cA| \le N^{1/2}+0.945\,N^{1/4}+0.447.\]
\end{corollary}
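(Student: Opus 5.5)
We derive the corollary from a Fourier-analytic diameter bound of Erdős--Turán / Lindström type. Since the main theorem is not yet stated at this point in the paper, we sketch the argument in a self-contained way.

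\textbf{Setup.} Write $\cA=\{a_1<\dots<a_k\}$, translated so that $a_1=0$, and put $D=\diam(\cA)=a_k$. Fix a window length $m\ge 1$ and an interval $I=[-m,D]$.

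\textbf{Counting argument.} For each $t\in I$ let $N(t)=|\cA\cap(t,t+m]|$. Every $a\in\cA$ lies in exactly $m$ of these windows, so
\[\sum_t N(t)=km.\]
Cauchy--Schwarz over the $D+m$ windows then gives
\[\sum_t N(t)^2\ge \frac{k^2m^2}{D+m}.\]
On the other hand, expanding $\sum_t N(t)^2$ counts ordered pairs $(a,b)$ lying in a common window:
\[\sum_t N(t)^2 = km+\sum_{a\neq b,\ |a-b|<m}\bigl(m-|a-b|\bigr).\]
By the Sidon property each nonzero difference $d$ occurs at most once as an ordered difference. Hence the sum on the right is at most $2\sum_{d=1}^{m-1}(m-d)=m(m-1)$. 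Putting the two bounds together yields
\[\frac{k^2m}{D+m}\le k+m-1.\]

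\textbf{Optimization, and where the obstacle lies.} Choosing $m\approx k^{3/2}$ in this inequality gives only $D\ge k^2-2k^{3/2}+O(k)$. That is the classical bound, and its constant $2$ is weaker than the required $1.89$. The main obstacle is therefore to gain the extra factor. The plan is to replace the sharp window $\1_{(0,m]}$ by a smoothed weight $f$ and work with the autocorrelation $f*\tilde f$. The Sidon bound on difference multiplicities then becomes a sum of $f*\tilde f$ over nonzero differences. Positivity of the Fourier transform of $f*\tilde f$, together with a careful treatment of boundary effects, should let us optimize over $f$ numerically and obtain the constant $c=0.945$ in a bound of the form
\[\frac{k^2}{D+1}\le 1+\frac{c}{\sqrt k}+\frac{ck^{1/2}}{D+1}\cdots\]
Rearranged, this is exactly the first displayed inequality of the corollary: $D+1\ge k^2\bigl(\tfrac{1}{1+c/\sqrt k}-\tfrac{c}{\sqrt k}\bigr)$. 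Certifying the specific test function and the constant $0.945$ uniformly for all $k\ge 2$ is the hard, computational part. Small values of $k$ would be checked against known optimal Golomb rulers.

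\textbf{Consequences.} Since $\tfrac{1}{1+x}\ge 1-x$, we get
\[k^2\Bigl(\tfrac{1}{1+c/\sqrt k}-\tfrac{c}{\sqrt k}\Bigr)\ge k^2-2ck^{3/2},\]
so $D\ge k^2-1.89k^{3/2}-1$. To remove the trailing $-1$, we keep the second-order term $c^2k$ in the expansion of $\tfrac{1}{1+x}$; it exceeds $1$ for $k\ge 2$.

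For the $N$-bound, take $\cA\subseteq[1,N]$ with $|\cA|=k$, so $N-1\ge D$. Setting $s=\sqrt k$, the first inequality gives
\[N\ge s^4\Bigl(\tfrac{1}{1+c/s}-\tfrac{c}{s}\Bigr).\]
This is a polynomial inequality in $s$. Suppose, for contradiction, that $s^2> N^{1/2}+cN^{1/4}+0.447$. Write $u=N^{1/4}$ and substitute into the inequality above. Showing that the resulting function of $u$ has the right sign for all $u\ge 1$ is a routine but fiddly calculus verification. The constant $0.447$ is roughly $c^2/2$ plus slack, which is what makes the expansion close.
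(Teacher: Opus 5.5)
Your proposal has a genuine gap at the step that carries the content of the corollary. The constant $0.945$ is asserted (``should let us optimize over $f$ numerically''), not derived, and the mechanism you describe cannot produce it.

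In your counting argument, Cauchy--Schwarz weights all $D+m$ window positions uniformly. In the paper's language that is the table $t=\1_{[-\upsilon,\sigma]}$, so $\norm{t}_2^2=\sigma+\upsilon$, i.e.\ $\alpha=\upsilon$. Meanwhile every window supported on $[0,\upsilon]$ with $\int w=1$ has $\norm{w}_2^2\ge 1/\upsilon$. Hence $\alpha\beta\ge 1$, and the constant is $2\sqrt{\alpha\beta}\ge 2$ however you smooth the window. Also, Fourier positivity of $f*\tilde f$ is automatic and is not the relevant condition: to insert the Sidon bound on multiplicities you need pointwise nonnegativity, $\EV{w*\tilde w}\ge 0$.

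The missing idea is a second test function. This is a table $t$ that equals $1$ in the middle but is tapered on the end pieces $[-\upsilon,0]$ and $[\sigma-\upsilon,\sigma]$. It is designed jointly with an asymmetric, possibly random, window so that $\EV{w*t}\ge 1$ still holds on $(0,\sigma)$. Let $F$ be the rescaled fattened indicator of $\cA$. The adjoint formula gives $\EV{\angleb{F*\tilde w,t}}\ge k$. Cauchy--Schwarz (applied twice), plus the rescaling of Lemma~\ref{lem:tau slide} that makes $\alpha'=\beta'=\sqrt{\alpha\beta}$, then yields
\[ k^2\le \bigl(k^{3/2}+(k-1)\sqrt{\alpha\beta}\bigr)\Bigl(\frac{D+1}{k^{3/2}}+\sqrt{\alpha\beta}\Bigr), \]
which implies the first displayed inequality with $c=\sqrt{\alpha\beta}$. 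The value $0.945$ comes from an exactly verified witness of the strong WT-triple $(3,1.630563,0.547610)$, for which $\sqrt{\alpha\beta}=0.944941$. No amount of ``optimizing $f$'' within your uniform-weight framework substitutes for this.

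Second, you cannot certify the bound ``uniformly for all $k\ge 2$'' by this route. The argument needs $\sigma'=(D+1)/k^{3/2}\ge 2\upsilon'$, so that Lemma~\ref{lem:big L} supplies a witness with the right $\sigma'$. Via Theorem~\ref{thm:ET}, this holds only for $k\ge 4(1+\upsilon\sqrt{\beta/\alpha})^2\approx 29.999$. The paper therefore uses the exhaustive optimal-ruler data for $k\le 28$. For $k=29$ it notes that the diameter is at least $586$ (one more than the optimal $28$-mark diameter $585$), which exceeds the bound.

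Your consequences paragraph matches the paper. The remainder term is $c^2k/(1+c/\sqrt k)$ rather than $c^2k$, but it still exceeds $1$ for $k\ge 2$. The $N$-bound is the same substitute-and-verify computation, with $0.447\approx c^2/2$.
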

In the case where $k$ is sufficiently large, we improve our numbers slightly:
\begin{corollary}\label{cor:asymptotic}
If $\cA\subseteq[1,N]$ is a Sidon set with $k\ge 483$ elements, then $\diam(\cA)\ge k^2-1.88607\, k^{3/2}$ and $k\le N^{1/2}+0.94304\,N^{1/4}+0.44466$.
\end{corollary}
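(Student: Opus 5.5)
The plan is to derive both claims from a single inequality of the refined shape
\[\diam(\cA)\ \ge\ \frac{k^2}{1+a/\sqrt{k}}-a\,k^{3/2}-1,\]
with $a$ slightly below $0.94304$, valid for all $k\ge 483$. This is the same shape as the bound in Corollary~\ref{cor:Sidon}, but with $0.945$ replaced by a smaller constant. I would take it from the paper's general diameter theorem for ($\gamma$-)Golomb rulers, specialised to $\gamma=1$. I expect that theorem to carry a free parameter, for example the scale of a test kernel or a truncation length in the counting of small differences, and an error term that decays in $k$. The first step is to write the constant in that theorem as an explicit function $a(k,\text{parameters})$ and choose the parameters so that $a(k,\cdot)\le a_0$ with $a_0\approx 0.943035$ for every $k\ge 483$. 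This matters because $2\cdot 0.94304=1.88608$ slightly exceeds $1.88607$.

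\textbf{Main obstacle.} The hardest step is this uniformity in $k$. Here is what I would try first. Show that the lower-order terms in $a(k,\cdot)$ are monotone decreasing in $k$ for a fixed parameter choice. Then it suffices to check the single value $k=483$ numerically, with rigorous rounding. The asymptotic optimum of the underlying extremal problem, presumably a one-dimensional optimisation over kernels or over the profile of the differences, should sit a little below $a_0$. The threshold $483$ is then just the point where the $k$-dependent corrections have dropped below the remaining slack.

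\textbf{Diameter bound.} Given the displayed inequality, use $1/(1+t)\ge 1-t$ with $t=a/\sqrt k$. This gives
\[\diam(\cA)\ge k^2-2a\,k^{3/2}-1.\]
The leftover $-1$ is absorbed by the slack $(1.88607\ldots-2a)k^{3/2}$. Alternatively, keep the positive term $a^2k-a^3k^{1/2}+\cdots$ that comes from the exact expansion of $k^2/(1+t)$, which dominates $1$ once $k\ge 483$. Either way we obtain $\diam(\cA)\ge k^2-1.88607\,k^{3/2}$.

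\textbf{Cardinality bound.} If $\cA\subseteq[1,N]$, then $\diam(\cA)\le N-1$, so
\[\frac{k^2}{1+a/\sqrt k}-a\,k^{3/2}\le N.\]
Put $s=N^{1/4}$ and suppose, for contradiction, that $k> s^2+as+b$ with $b=0.44466$. The left-hand side is increasing in $k$ in the relevant range, so it suffices to show that the left-hand side evaluated at $k=s^2+as+b$ exceeds $s^4$.

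To see where $b$ comes from, expand
\[\frac{k^2}{1+a/\sqrt k}=k^2-ak^{3/2}+a^2k-\cdots.\]
The left-hand side is then $k^2-2ak^{3/2}+a^2k+O(k^{1/2})$. Substituting $k=s^2+as+b$, the $s^3$ terms cancel, and the $s^2$ coefficient is
\[a^2+2b-3a^2+a^2=2b-a^2.\]
Hence the natural constant is $b=a^2/2\approx 0.44466$, which matches the statement.

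To make the inversion rigorous, I would clear denominators, substituting $\sqrt k=u$ so that everything becomes a polynomial inequality in $s$ and $u$. I would then verify positivity for all $s$ with $s^2\gtrsim 483$, where the condition $k\ge483$ keeps us in this range, using interval arithmetic on the finitely many lower-order coefficients. The decimal $0.44466$ is slightly above $a^2/2$ precisely to absorb the $O(s)$ and $O(1)$ error terms, and this is what the final check confirms.
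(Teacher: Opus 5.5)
Your diameter half is essentially the paper's argument. The cardinality half contains a step that fails.

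\textbf{Diameter bound.} The input is Theorem~\ref{thm:motivation} with $\gamma=1$ and the WT-triple $(10,0.944814,0.941259)$. It gives \eqref{eq:explicit} with a \emph{fixed} constant $c=\sqrt{\alpha\beta}\approx 0.943034$, valid for every $k\ge 4(1+10\sqrt{\beta/\alpha})^2\approx 482.4$. Since $(k-1)k^{-3/2}<k^{-1/2}$, this is exactly your displayed inequality with $a=c$. So there is no $k$-dependent constant to control, and your ``main obstacle'' does not arise: $483$ is simply the hypothesis of that theorem.

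Of your two ways to finish, only the second works. Here $1.88607-2c$ is at most a few times $10^{-6}$, so $(1.88607-2c)k^{3/2}<0.04$ at $k=483$ and cannot absorb the $-1$. One uses instead
\[\frac{k^2}{1+c/\sqrt k}-ck^{3/2}-1=k^2-2ck^{3/2}+\frac{c^2k}{1+c/\sqrt k}-1\ge k^2-2ck^{3/2}.\]

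\textbf{Cardinality bound: the gap.} Your expansion stops at the $s^2$ term. Set $F(k)=k^2/(1+c/\sqrt k)-ck^{3/2}$ and $k'=s^2+a's+b'$. One more order gives
\[F(k')-s^4=2(a'-c)s^3+\bigl(a'^2-3a'c+c^2+2b'\bigr)s^2-\bigl(c\,b'+\tfrac34c^3+O(a'-c)\bigr)s+O(1).\]
For $a'=0.94304$ and $b'=0.44466$, the first two coefficients are of order $10^{-5}$ and $10^{-6}$, while the third is about $-1.05$. Hence $F(k')<s^4=N$ for all $N$ up to several times $10^9$, and the interval-arithmetic check you describe would fail rather than confirm.

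The $O(s)$ term is not an error that the margin $0.44466-c^2/2$ (a few times $10^{-6}$) can absorb. The actual inverse of your inequality is
\[k\le \sqrt N+cN^{1/4}+\tfrac{c^2}{2}+\tfrac{5c^3}{8}N^{-1/4}+\cdots.\]
If you keep the factor $k-1$, the last coefficient becomes $c(5c^2-4)/8\approx 0.053$, matching the paper's expansion of $z_3$. In either case the correction is positive.

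Concretely, at $k=483$ your inequality forces only $N\ge 213681$, or $N\ge 213700$ if the factor $k-1$ is kept. But $483\le \sqrt N+0.94304N^{1/4}+0.44466$ requires $N\ge 213703$. So a $483$-element set in $[1,213700]$ is not excluded by the diameter bound, yet the claimed inequality would forbid it.

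\textbf{The paper has the same issue.} The paper's proof of this corollary performs exactly your substitution and asserts that the resulting inequality has no solutions with $N\ge 2\cdot 10^5$. By the computation above, that assertion is also false, for instance at $N=10^8$ and at $N\approx 2.137\cdot 10^5$. So the defect lies in the constant $0.44466$, not only in your route. What the diameter bound actually yields for all $k\ge 483$ is the same inequality with a slightly larger additive constant; keeping the factor $k-1$, a value near $0.448$ appears to suffice. Alternatively, it yields the stated constants only for much larger $N$.
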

\noindent The previous asymptotically best lower bound on the diameter was the non-explicit 
\[ \diam(\cA)\ge k^2 - 1.98183 \, k^{3/2}-O(k) \]
found in~\cite{2025.Carter&Hunter&Obryant}. While this paper was in preparation, Hou and Zhao~\cite{2026.Hou&Zhao} announced an improvement to
\[ |\cA| \le N^{1/2} + 0.9435\,N^{1/4}+O(1). \]

Define $b_k$ to be the largest number such that there is a Sidon set $\cA$ with $k$ elements and $\diam(\cA)=k^2-b_k k^{3/2}$, and let
\[ \overline{b_\infty}=\limsup_{k\to\infty} b_k. \]
An inequality in the work of Erd\H{o}s and Tur\'an~\cite{1941.Erdos&Turan} in 1941 establishes that $b_k\le 2$ for all $k\ge 1$, and Singer~\cite{1938.Singer} proved in 1938 that $\overline{b_\infty}\ge 0$. The aforementioned asymptotic results imply $\overline{b_\infty}\le 1.98183$ and $\overline{b_\infty}\le 1.887$, respectively, and our work improves this slightly to $\overline{b_\infty}\le 1.88607$, while also providing explicit bounds on $b_k$ for finite $k$.

The value of $b_k$ has been computed exactly for $1\le k \le 28$, with $b_{28}=\frac{199}{56 \sqrt{7}}\approx 1.343$ alone taking tens of thousands of machines more than $8$ years to complete~\cite{distributed.net}. Figure~\ref{fig:b_k values and bounds} shows the exact values, the upper bounds proved in this work, and the lower bounds computed by Shearer~\cite{shearer_ogr_table}.

We note that Shearer's technique (recently extended to $k\le 40\,200$ by Dogon and Rokicki~\cite{2016.Dogon&Rokicki}) happened to find the actual $b_k$ value for $17\le k \le 28$, suggesting that the lower bounds for $k\ge 29$ may be the truth. Eberhard and Manners~\cite{2023.Eberhard&Manners} give a plausible argument that the lower bound is the truth. On the other hand, Shearer's search relies on having a prime power near $k$, and as prime powers become scarcer perhaps other as-yet-unknown opportunities for dense Sidon sets arise. 

\begin{figure}
    \centering
    \begin{tikzpicture}[x=0.009\textwidth, y=0.15\textwidth]  
      % Gridlines (vertical)
      \draw[gray!50, line width=0.4pt] (20,0) -- (20,2.0);
      \draw[gray!50, line width=0.4pt] (40,0) -- (40,2.0);
      \draw[gray!50, line width=0.4pt] (60,0) -- (60,2.0);
      \draw[gray!50, line width=0.4pt] (80,0) -- (80,2.0);
      \draw[gray!50, line width=0.4pt] (100,0) -- (100,2.0);
    
      % Gridlines (horizontal)
      \draw[gray!50, line width=0.4pt] (0,0.5) -- (100,0.5);
      \draw[gray!50, line width=0.4pt] (0,1.0) -- (100,1.0);
      \draw[gray!50, line width=0.4pt] (0,1.5) -- (100,1.5);
      \draw[gray!50, line width=0.4pt] (0,2.0) -- (100,2.0);
    
      % Frame
      \draw[black, line width=0.6pt] (0,0) rectangle (100,2.0);
    
      % X tick labels
      \node[anchor=north, yshift=-2pt] at (0,0) {\small 0};
      \node[anchor=north, yshift=-2pt] at (20,0) {\small 20};
      \node[anchor=north, yshift=-2pt] at (40,0) {\small 40};
      \node[anchor=north, yshift=-2pt] at (60,0) {\small 60};
      \node[anchor=north, yshift=-2pt] at (80,0) {\small 80};
      \node[anchor=north, yshift=-2pt] at (100,0) {\small 100};
    
      % Y tick labels
      \node[anchor=east, xshift=-2pt] at (0,0.0) {\small 0.0};
      \node[anchor=east, xshift=-2pt] at (0,0.5) {\small 0.5};
      \node[anchor=east, xshift=-2pt] at (0,1.0) {\small 1.0};
      \node[anchor=east, xshift=-2pt] at (0,1.5) {\small 1.5};
      \node[anchor=east, xshift=-2pt] at (0,2.0) {\small 2.0};
    
      % Axis labels (placed at the ends of the axes, just past the frame)
      \node[anchor=west, xshift=4pt] at (100,0) {$k$};
      \node[anchor=south, yshift=4pt] at (0,2.0) {$b_k$};
    
      % Data: bars for 1<=k<=28 collapse to points (L1=L2), drawn as fixed-size
      % circular nodes so they stay round and legible no matter how \figwidthfactor
      % is set; bars (line segments) for k>=29.
      \node[fill=blue, circle, inner sep=0pt, minimum size=2.6pt] at (1,1.000000) {};
      \node[fill=blue, circle, inner sep=0pt, minimum size=2.6pt] at (2,1.060660) {};
      \node[fill=blue, circle, inner sep=0pt, minimum size=2.6pt] at (3,1.154701) {};
      \node[fill=blue, circle, inner sep=0pt, minimum size=2.6pt] at (4,1.250000) {};
      \node[fill=blue, circle, inner sep=0pt, minimum size=2.6pt] at (5,1.252198) {};
      \node[fill=blue, circle, inner sep=0pt, minimum size=2.6pt] at (6,1.292786) {};
      \node[fill=blue, circle, inner sep=0pt, minimum size=2.6pt] at (7,1.295878) {};
      \node[fill=blue, circle, inner sep=0pt, minimum size=2.6pt] at (8,1.325825) {};
      \node[fill=blue, circle, inner sep=0pt, minimum size=2.6pt] at (9,1.370370) {};
      \node[fill=blue, circle, inner sep=0pt, minimum size=2.6pt] at (10,1.423025) {};
      \node[fill=blue, circle, inner sep=0pt, minimum size=2.6pt] at (11,1.343096) {};
      \node[fill=blue, circle, inner sep=0pt, minimum size=2.6pt] at (12,1.419319) {};
      \node[fill=blue, circle, inner sep=0pt, minimum size=2.6pt] at (13,1.344081) {};
      \node[fill=blue, circle, inner sep=0pt, minimum size=2.6pt] at (14,1.317216) {};
      \node[fill=blue, circle, inner sep=0pt, minimum size=2.6pt] at (15,1.273781) {};
      \node[fill=blue, circle, inner sep=0pt, minimum size=2.6pt] at (16,1.234375) {};
      \node[fill=blue, circle, inner sep=0pt, minimum size=2.6pt] at (17,1.284012) {};
      \node[fill=blue, circle, inner sep=0pt, minimum size=2.6pt] at (18,1.414214) {};
      \node[fill=blue, circle, inner sep=0pt, minimum size=2.6pt] at (19,1.388569) {};
      \node[fill=blue, circle, inner sep=0pt, minimum size=2.6pt] at (20,1.308100) {};
      \node[fill=blue, circle, inner sep=0pt, minimum size=2.6pt] at (21,1.122263) {};
      \node[fill=blue, circle, inner sep=0pt, minimum size=2.6pt] at (22,1.240441) {};
      \node[fill=blue, circle, inner sep=0pt, minimum size=2.6pt] at (23,1.423338) {};
      \node[fill=blue, circle, inner sep=0pt, minimum size=2.6pt] at (24,1.284281) {};
      \node[fill=blue, circle, inner sep=0pt, minimum size=2.6pt] at (25,1.160000) {};
      \node[fill=blue, circle, inner sep=0pt, minimum size=2.6pt] at (26,1.387899) {};
      \node[fill=blue, circle, inner sep=0pt, minimum size=2.6pt] at (27,1.254489) {};
      \node[fill=blue, circle, inner sep=0pt, minimum size=2.6pt] at (28,1.343124) {};
      \draw[blue, line width=1.2pt] (29,1.395917) -- (29,1.632838);
      \draw[blue, line width=1.2pt] (30,1.338877) -- (30,1.752712);
      \draw[blue, line width=1.2pt] (31,1.239856) -- (31,1.755497);
      \draw[blue, line width=1.2pt] (32,1.325825) -- (32,1.756718);
      \draw[blue, line width=1.2pt] (33,1.213269) -- (33,1.756602);
      \draw[blue, line width=1.2pt] (34,1.099609) -- (34,1.760383);
      \draw[blue, line width=1.2pt] (35,1.149410) -- (35,1.762750);
      \draw[blue, line width=1.2pt] (36,1.347222) -- (36,1.763889);
      \draw[blue, line width=1.2pt] (37,1.199668) -- (37,1.763957);
      \draw[blue, line width=1.2pt] (38,1.272157) -- (38,1.767360);
      \draw[blue, line width=1.2pt] (39,1.104474) -- (39,1.769621);
      \draw[blue, line width=1.2pt] (40,1.257005) -- (40,1.770875);
      \draw[blue, line width=1.2pt] (41,1.432228) -- (41,1.771239);
      \draw[blue, line width=1.2pt] (42,1.348317) -- (42,1.770815);
      \draw[blue, line width=1.2pt] (43,1.212896) -- (43,1.773239);
      \draw[blue, line width=1.2pt] (44,1.164930) -- (44,1.774805);
      \draw[blue, line width=1.2pt] (45,1.119690) -- (45,1.775604);
      \draw[blue, line width=1.2pt] (46,1.323772) -- (46,1.775714);
      \draw[blue, line width=1.2pt] (47,1.256922) -- (47,1.778311);
      \draw[blue, line width=1.2pt] (48,1.253933) -- (48,1.777156);
      \draw[blue, line width=1.2pt] (49,1.291545) -- (49,1.778426);
      \draw[blue, line width=1.2pt] (50,1.148341) -- (50,1.779081);
      \draw[blue, line width=1.2pt] (51,1.128461) -- (51,1.781925);
      \draw[blue, line width=1.2pt] (52,1.157403) -- (52,1.781441);
      \draw[blue, line width=1.2pt] (53,1.197369) -- (53,1.783096);
      \draw[blue, line width=1.2pt] (54,1.368388) -- (54,1.784196);
      \draw[blue, line width=1.2pt] (55,1.046849) -- (55,1.784791);
      \draw[blue, line width=1.2pt] (56,0.980753) -- (56,1.784923);
      \draw[blue, line width=1.2pt] (57,1.106101) -- (57,1.786957);
      \draw[blue, line width=1.2pt] (58,1.161383) -- (58,1.786220);
      \draw[blue, line width=1.2pt] (59,1.257757) -- (59,1.787339);
      \draw[blue, line width=1.2pt] (60,1.250113) -- (60,1.788027);
      \draw[blue, line width=1.2pt] (61,1.232093) -- (61,1.788318);
      \draw[blue, line width=1.2pt] (62,1.288437) -- (62,1.790292);
      \draw[blue, line width=1.2pt] (63,1.155891) -- (63,1.789832);
      \draw[blue, line width=1.2pt] (64,1.111328) -- (64,1.791016);
      \draw[blue, line width=1.2pt] (65,1.205999) -- (65,1.791825);
      \draw[blue, line width=1.2pt] (66,1.117149) -- (66,1.792287);
      \draw[blue, line width=1.2pt] (67,1.221694) -- (67,1.792426);
      \draw[blue, line width=1.2pt] (68,1.191278) -- (68,1.794050);
      \draw[blue, line width=1.2pt] (69,1.074749) -- (69,1.793575);
      \draw[blue, line width=1.2pt] (70,1.166202) -- (70,1.794550);
      \draw[blue, line width=1.2pt] (71,1.188453) -- (71,1.795216);
      \draw[blue, line width=1.2pt] (72,1.163780) -- (72,1.795593);
      \draw[blue, line width=1.2pt] (73,1.308296) -- (73,1.797303);
      \draw[blue, line width=1.2pt] (74,1.135771) -- (74,1.797126);
      \draw[blue, line width=1.2pt] (75,0.989963) -- (75,1.798254);
      \draw[blue, line width=1.2pt] (76,1.036899) -- (76,1.797593);
      \draw[blue, line width=1.2pt] (77,1.073005) -- (77,1.798209);
      \draw[blue, line width=1.2pt] (78,1.139535) -- (78,1.800030);
      \draw[blue, line width=1.2pt] (79,1.186327) -- (79,1.800141);
      \draw[blue, line width=1.2pt] (80,1.169743) -- (80,1.800035);
      \draw[blue, line width=1.2pt] (81,1.157750) -- (81,1.801097);
      \draw[blue, line width=1.2pt] (82,1.225521) -- (82,1.801919);
      \draw[blue, line width=1.2pt] (83,1.149219) -- (83,1.801192);
      \draw[blue, line width=1.2pt] (84,1.165128) -- (84,1.802895);
      \draw[blue, line width=1.2pt] (85,1.039990) -- (85,1.803075);
      \draw[blue, line width=1.2pt] (86,1.077074) -- (86,1.803064);
      \draw[blue, line width=1.2pt] (87,1.061022) -- (87,1.804107);
      \draw[blue, line width=1.2pt] (88,1.210156) -- (88,1.803727);
      \draw[blue, line width=1.2pt] (89,1.361323) -- (89,1.804378);
      \draw[blue, line width=1.2pt] (90,1.326985) -- (90,1.804841);
      \draw[blue, line width=1.2pt] (91,0.851299) -- (91,1.805123);
      \draw[blue, line width=1.2pt] (92,0.959846) -- (92,1.806369);
      \draw[blue, line width=1.2pt] (93,1.029147) -- (93,1.806303);
      \draw[blue, line width=1.2pt] (94,1.044588) -- (94,1.806084);
      \draw[blue, line width=1.2pt] (95,1.142616) -- (95,1.806802);
      \draw[blue, line width=1.2pt] (96,1.164146) -- (96,1.807349);
      \draw[blue, line width=1.2pt] (97,1.101180) -- (97,1.807735);
      \draw[blue, line width=1.2pt] (98,1.177137) -- (98,1.807967);
      \draw[blue, line width=1.2pt] (99,1.280154) -- (99,1.808053);
      \draw[blue, line width=1.2pt] (100,1.169000) -- (100,1.809000);
    \end{tikzpicture}
    \caption{The known bounds on $b_k$ for $k \le 100$. Restricting to $k\le 100$ may give a false impression: the lower bound depends on the distribution of prime powers, and it is possible that $\liminf_k b_k = -\infty$ and $\limsup_k b_k = 0$.}
\label{fig:b_k values and bounds}
\end{figure}

In addition to providing explicit bounds, our approach is also flexible enough to address a number of variations, as we discuss in the rest of the introduction. In particular, it naturally handles the generalization from Sidon sets to $\gamma$-Golomb rulers. Also, we are able to give quantitative results on the distribution of nearly densest-possible $\gamma$-Golomb rulers (including Sidon sets), as well an explicit upper bound on the largest gap between elements that can occur in a dense Golomb ruler.

\subsection{\texorpdfstring{$\gamma$-Golomb rulers}{gamma-Golomb rulers}}
\begin{definition}[$\gamma$-Golomb ruler]
For $\gamma\ge 1$, a $\gamma$-Golomb ruler is a set $\cA$ of integers with the property that for each nonzero $n$, the equation $n=a-b$ has at most $\gamma$ solutions $(a,b)\in \cA\times \cA$. 
\end{definition}
Trivially, $1$-Golomb rulers are Sidon sets. The interval $\{1,2,\dots,k\}$ is a minimum-diameter $\gamma$-Golomb ruler if $k\le \gamma+1$. Some work, e.g.~\cites{2025.Carter&Hunter&Obryant,2023.Balogh&Furedi&Roy}, refer to $\gamma$-Golomb rulers as $\gamma$-thin Sidon sets.

Despite the naturalness of the definition, analysis of the $\gamma>1$ case has been relatively sparse. Atkinson and Hassenklover~\cite{1984.Atkinson&Hassenklover} introduced $\gamma$-Golomb rulers in 1984, with more computational work presented in 1986~\cite{1986.Atkinson&Santoro&Urrutia}. Interest was dormant until the 2015 work~\cite{2015.Caicedo&Martos&Trujillo} of Caicedo--Martos--Trujillo, who proved 
\[ \diam(\cG)\ge \frac1\gamma k^2+o(k^2) \]
for a $\gamma$-Golomb ruler $\cG$ (we give essentially the same result and proof, but in our language, as Theorem~\ref{thm:ET} below). They also proved that there are $\gamma$-Golomb rulers $\cG$ with $k$ arbitrarily large and 
\[ \diam(\cG)=\frac{1}{\gamma}k^2+O(1). \]
More constructions of dense rulers were given in~\cite{2021.Ojeda&Urbano&Solarte} in 2021 and by Gupta--O'Bryant in~\cite{2026.Gupta&Obryant} in 2026.

Balogh--F\"uredi--Roy~\cite{2023.Balogh&Furedi&Roy} note that their technique gives $\diam(\cA) \ge \frac1\gamma (k^2-(2-\epsilon)k^{3/2})+O(1)$, for some $\epsilon=\epsilon(\gamma)>0$ depending on $\gamma$. In~\cite{2025.Carter&Hunter&Obryant}, the explicit value $\epsilon(g)\ge \frac{1}{50\gamma^2}$ is given. In this work, we give $\epsilon>0.11$, the first value independent of $\gamma$.

\begin{corollary}[The minimum diameter of a $\gamma$-Golomb ruler]
    \label{cor:Golomb}
    Suppose that $k\ge\max\{483,\gamma+1\}$, and that $\cA$ is a $\gamma$-Golomb ruler with $k$ elements. Then
    \begin{align*}
        \diam(\cA) &\ge \frac{k^2}{\gamma}\left(\frac{1}{1+0.94304\,(k-\gamma)k^{-3/2}} - \frac{0.94304}{\sqrt{k}}\right)-1 \\
        &\ge\frac{1}{\gamma}\left(k^2 - 1.88607 \, k^{3/2}\right).
    \end{align*}
    If $\cA\subseteq[1,N]$, then
    \[ k \le \sqrt{\gamma N} + 0.94304 \, \sqrt[4]{\gamma N}+ 0.44466.\]
\end{corollary}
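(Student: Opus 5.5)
This corollary will be derived from the paper's main diameter theorem for $\gamma$-Golomb rulers. That theorem is not yet stated in the text above, so I am guessing its form. It should be a Fourier/convolution inequality in the spirit of Erd\H{o}s--Tur\'an and Theorem~\ref{thm:ET}. I expect it to say that if $\cA$ is a $\gamma$-Golomb ruler with $k$ elements and diameter $D$, then for a suitable test function one has an inequality of the shape
\[ \frac{k^2}{\gamma(D+1)} \le 1 + c\,\frac{k-\gamma}{k^{3/2}} \quad\text{combined with}\quad D+1 \ge \frac{k^2}{\gamma}\Bigl(\frac{1}{1+c(k-\gamma)k^{-3/2}}-\frac{c}{\sqrt k}\Bigr), \]
with an explicit constant $c=0.94304$ valid once $k\ge 483$. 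The threshold $k\ge 483$ is where the optimized auxiliary function's error terms are numerically controlled.

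\textbf{First inequality.} The plan is to apply that theorem with $c=0.94304$. The hypothesis $k\ge\gamma+1$ ensures $k-\gamma\ge 1$, so the quantity is well defined and positive. The first displayed inequality is then exactly the theorem's conclusion, rearranged.

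\textbf{Second inequality.} Put $x=c(k-\gamma)k^{-3/2}\ge 0$. Then $1/(1+x)\ge 1-x$, and $x\le c/\sqrt k$ because $k-\gamma\le k$. Hence the bracket is at least $1-2c/\sqrt k$, and
\[ \diam(\cA)\ge \frac{1}{\gamma}\bigl(k^2-2c\,k^{3/2}\bigr)-1. \]
Since $2c=1.88608$, absorbing the $-1$ requires $(1.88608-1.88607)k^{3/2}/\gamma\ge 1$. This fails for large $\gamma$, so the crude bound is not enough. I will instead keep the second-order term: $1/(1+x)\ge 1-x+x^2/2$ for $x\le 1$, giving a gain of roughly $c^2k(k-\gamma)^2/(2\gamma k^{2})$. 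That gain beats $1$ whenever $k-\gamma$ is large. When $k-\gamma$ is small, $x$ is small, and the bracket is about $1-c/\sqrt k$, so there is a large surplus. Splitting into these two cases is a routine but fiddly calculation, and I regard it as the main obstacle. It is also possible that the paper's theorem already gives a stronger form and the $-1$ is harmless.

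\textbf{Bound for $\cA\subseteq[1,N]$.} Here $\diam(\cA)\le N-1$. From the first inequality, $\gamma N\ge k^2 f(k)$, where $f(k)=\frac{1}{1+x}-\frac{c}{\sqrt k}$. Since $x\le c/\sqrt k$, we get $f(k)\ge \frac{1}{1+c/\sqrt k}-\frac{c}{\sqrt k}$, and this no longer depends on $\gamma$. Write $M=\gamma N$. It then suffices to show that $k> \sqrt M+cM^{1/4}+0.44466$ forces $k^2\bigl(\frac{1}{1+c/\sqrt k}-\frac{c}{\sqrt k}\bigr)>M$. The left side is increasing in $k$, so I substitute $k=\sqrt M+cM^{1/4}+d$ and expand in $M^{-1/4}$. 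The $M$ and $M^{3/4}$ terms cancel by the choice of the coefficient $c$. The constant-order term determines $d$: one needs $2d - \tfrac{c^2}{4}\cdot(\ldots)$ positive, which is where $0.44466$ comes from. The remaining terms are checked to be negligible, or nonnegative, for $M\ge 483^2/2$ or so.
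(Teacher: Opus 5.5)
The first inequality is handled as in the paper. Theorem~\ref{thm:motivation} is applied to the WT-triple $(10,0.944814,0.941259)$. Its threshold $4(1+\upsilon\sqrt{\beta/\alpha})^2\approx 482.4$ is where $483$ comes from, not from ``error terms.'' Its constant $c=\sqrt{\alpha\beta}\approx 0.9430348$ may be rounded up to $0.94304$, because the bound decreases in $c$.

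\textbf{Second inequality.} The gap is that you derive the remaining two claims from the \emph{rounded} bound, and they do not follow from it. Your sign is backwards here. With $2c=1.88608>1.88607$, the mismatch $10^{-5}k^{3/2}/\gamma$ is an extra deficit, not slack. For bounded $\gamma$, every gain your case split can produce is only $O(k/\gamma)$. Concretely, for $\gamma=1$ the first displayed bound (with $c=0.94304$) equals $k^2-1.88608k^{3/2}+c^2k+O(\sqrt k)$. This drops below $k^2-1.88607k^{3/2}$ once $k\gtrsim 8\cdot 10^9$, so the chain cannot be proved the way you set it up.

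The fix is to go back to the theorem with the unrounded $c$, for which $2c\approx 1.8860697\le 1.88607$. No case analysis is then needed. Put $x=c(k-\gamma)k^{-3/2}$, so that $x\le c/\sqrt k$. One has exactly
\[ k^2\Bigl(\frac{1}{1+x}-\frac{c}{\sqrt k}\Bigr)=k^2-2ck^{3/2}+c\sqrt k\,\frac{kx+\gamma}{1+x}\ge k^2-2ck^{3/2}+\frac{\gamma\, ck}{\sqrt k+c}. \]
Since $ck/(\sqrt k+c)\approx 19.9>1$, the last term absorbs the $-1$ uniformly in $\gamma$. This is exactly the surplus you discarded when you replaced $k-\gamma$ by $k$.

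\textbf{Bound for $\cA\subseteq[1,N]$.} The same issue breaks your plan here, even though the route matches the paper's: monotonicity of $g(k)=k^2\bigl(\frac{1}{1+c/\sqrt k}-\frac{c}{\sqrt k}\bigr)$ followed by substitution. Inverting gives
\[ g^{-1}(M)=\sqrt M+cM^{1/4}+\tfrac{c^2}{2}+\tfrac58c^3M^{-1/4}+O(M^{-1/2}). \]
So the additive constant must exceed $c^2/2$. With $c=0.94304$ this is $0.4446622>0.44466$, and the check fails for every large $M$. With the exact $c$ the constant term is fine, since $c^2/2\approx 0.4446573$. However, the only other slack is $(0.94304-c)M^{1/4}\approx 5\cdot 10^{-6}M^{1/4}$. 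This does not dominate the positive $M^{-1/4}$ term until $M$ is astronomically large.

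Hence your claim that the remaining terms are negligible for $M\gtrsim 483^2/2$ is false, and not only asymptotically. Take $\gamma=1$ and $k=483$. The theorem gives only $\diam(\cA)\ge 213698.26$, which is compatible with $N=213700$. Yet $\sqrt{213700}+0.94304\cdot 213700^{1/4}+0.44466\approx 482.998<483$. So this step cannot be closed near the threshold with these constants. The paper's one-line verification has the same defect. You would need either a larger lower bound on $k$ or a larger additive constant.
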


\subsection{On the distribution of \texorpdfstring{$\gamma$}{γ}-Golomb rulers}

The method that gives Corollary~\ref{cor:Golomb} can be modified to reveal some distribution information on $\gamma$-Golomb rulers that are within $O(k^{3/2})$ of optimal. One should expect that Golomb rulers are uniformly distributed, i.e., if $\{a_1<\dots<a_k\}$ is a $\gamma$-Golomb ruler with $a_1=0$, then one expects that $a_\ell \approx \frac{\ell}{k} a_k$. Erd\H{o}s and Freud~\cite{1991.Erdos&Freud} proved that roughly optimal Sidon sets are indeed uniformly distributed in this sense, and S.\ W.\ Graham~\cite{1996.Graham} gave a more quantitative, though still not explicit, version. In 2000, Cilleruelo~\cite{2000.Cilleruelo-b} gave the following fully explicit theorem:
\begin{theorem*}[Cilleruelo]\label{thm:Cilleruelo}
    Let $\cA\subseteq[N]$ be a Sidon set with $N^{1/2}-L$ elements. Then any interval $I$ of length $\delta N$ contains $\delta |\cA|+E_I$ elements of $A$, with
    \[|E_I| \le 52 N^{1/4} (1+\delta^{1/2}N^{1/8})(1+L_+^{1/2}N^{-1/8}), \quad L_+= \max\{0,L\}.\]
\end{theorem*}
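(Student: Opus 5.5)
The plan is to prove Cilleruelo's theorem by the Erdős–Turán counting method, carried out inside one interval $I$. We then turn the resulting variance bound into a discrepancy bound for intervals.

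\textbf{Setup.} Fix an interval $I=[x,x+\delta N)$ and a window length $u$, to be chosen later of size roughly $N^{3/4}$ (adjusted when $\delta$ is small). For each shift $t$ put
\[ r(t)=|\cA\cap[t,t+u)|. \]
Each element of $\cA$ lies in exactly $u$ windows, so $\sum_t r(t)=u|\cA|$. The Sidon property bounds the number of pairs: every nonzero difference $d$ occurs at most once and lies in at most $u-|d|$ windows. Hence
\[ \sum_t \binom{r(t)}{2}\le \sum_{0<d<u}(u-d)\approx \frac{u^2}{2}. \]
Combining these two facts with Cauchy–Schwarz controls the variance of $r(t)$ about its mean $u|\cA|/(N+u)$. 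Because $|\cA|=N^{1/2}-L$, the mean is close to the maximum the second moment allows. The slack in the second moment is then of order
\[ u^2 N^{-1/2}L_+ + uN^{1/2}+\text{boundary terms}. \]
This is where $L_+$ enters.

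\textbf{Localization.} Next we compare $|\cA\cap I|$ with $\frac1u\sum_{t}r(t)$, summing over the windows whose left endpoints run through $I$. The two differ only by edge effects: at most $|\cA\cap J|$ for two intervals $J$ of length $u$, and each such count is $O(u/N^{1/2}+\sqrt{u})$ by the trivial Sidon bound on short intervals.

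It then remains to show that the average of $r(t)$ over $t\in I$ is close to the global mean. By Cauchy–Schwarz, the deviation of that average is at most
\[ \Bigl(\frac{|I|}{N}\Bigr)^{-1/2} \times (\text{global standard deviation}). \]
Substituting the variance bound gives $E_I\lesssim$ (edge terms) $+\ \delta^{1/2}\cdot(\text{std}/u)\cdot N$. After optimizing $u$ this has the shape $N^{1/4}\bigl(1+\delta^{1/2}N^{1/8}\bigr)\bigl(1+L_+^{1/2}N^{-1/8}\bigr)$.

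\textbf{Main obstacle.} I expect the hardest part to be bookkeeping the constants explicitly so that they come to $52$. This means tracking the boundary windows near $0$ and $N$ in both moment sums, and choosing $u$ as an explicit function of $N,\delta,L$ that is integral and $\le N$. The exponent balance ($u\approx N^{3/4}$ versus $N^{5/8}$-type terms coming from $\delta^{1/2}$) would be checked first. Generous constants are then absorbed into $52$.
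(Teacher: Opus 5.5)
The paper does not prove this statement; it quotes it from Cilleruelo~\cite{2000.Cilleruelo-b}, so your outline has to stand on its own. Your second-moment step is sound. From $\sum_t r(t)=u|\cA|$ and $\sum_t\binom{r(t)}{2}\le\binom{u}{2}$ you get $V\coloneqq\sum_t (r(t)-\mu)^2\le u^3/N+2u^2L_+N^{-1/2}+u|\cA|$. At $u\asymp N^{3/4}$ the Cauchy--Schwarz term $u^{-1}(\delta N)^{1/2}V^{1/2}$ is $O\bigl(\delta^{1/2}N^{3/8}(1+L_+^{1/2}N^{-1/8})\bigr)$, which is the right shape.

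The genuine gap is the localization step. You bound the two edge intervals $J$ of length $u$ by the trivial Sidon bound $|\cA\cap J|\le\sqrt u+O(u^{1/4})$. At $u\asymp N^{3/4}$ this is $\asymp N^{3/8}$, but for $\delta=N^{-1/4}$ and $L\le 0$ the claimed bound is only $104N^{1/4}$. Adjusting $u$ for small $\delta$ does not rescue this. For $u\le N^{3/4}$ the estimate your method delivers is $\asymp u^{1/2}+\delta^{1/2}N^{3/4}u^{-1/2}$, whose minimum over $u$ is $\asymp\delta^{1/4}N^{3/8}$, which equals $N^{5/16}$ at $\delta=N^{-1/4}$. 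Taking $u>N^{3/4}$ only makes the edges worse. So the plan as written proves a strictly weaker estimate, and no bookkeeping will produce the constant $52$.

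The missing idea is to avoid estimating the edge intervals separately. Instead, use $r\ge 0$ to sandwich $|\cA\cap I|$ for $I=[x,x+\delta N)$:
\[ \sum_{x\le t\le x+\delta N-u} r(t)\;\le\; u\,|\cA\cap I|\;\le\;\sum_{x-u<t<x+\delta N} r(t). \]
This holds because every window on the left lies inside $I$, while every $a\in I$ lies in exactly $u$ of the windows on the right.

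Each side equals (number of shifts)$\cdot\mu$ plus a deviation of at most $(\delta N+u)^{1/2}V^{1/2}$. The number of shifts differs from $\delta N$ by less than $u$. So after dividing by $u$, the edge cost is at most $2\mu\approx 2u|\cA|/N\approx 2uN^{-1/2}$. This is the \emph{expected} count in a window, which is $\asymp N^{1/4}$ at $u\asymp N^{3/4}$, rather than the worst-case $\sqrt u$.

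With the single choice $u=\lceil N^{3/4}\rceil$, independent of $\delta$, this yields
\[ |E_I|\le(2+\sqrt2+o(1))\,N^{1/4}(1+\delta^{1/2}N^{1/8})(1+L_+^{1/2}N^{-1/8}), \]
well within $52$.

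The paper's own distribution result, Theorem~\ref{thm:ud}, avoids edge effects in the same spirit. It splits $f=f_1+f_2$ and uses $\norm{g+h}_2^2\ge\norm{g}_2^2+\norm{h}_2^2$ for nonnegative $g,h$, so the edge cost enters only through the table overhang $\alpha'$.
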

Note that as the Sidon set because more sparse, i.e., as $L$ increases, the control weakens. Also, as the length of the interval shrinks, i.e., as $\delta$ shrinks, the control strengthens.

We give a result along these lines in Corollary~\ref{cor:ud}. Our result is not fully explicit, in that we have a $O(|\cA|^{3/2})$ term, but there is no theoretical barrier to making our bound explicit. In that regime, however, our asymptotic constant is much sharper that Cilleruelo's. Further, we express the property as a bound on
    \[\abs*{a_\ell -\frac{\ell}{k} a_k}\] 
(assuming $a_1=0$), rather than as bounds on $\abs*{|\cA \cap I| - \frac{\lambda(I)}{\diam(\cA)}|\cA|}$, where $\lambda(I)$ is the length of the interval $I$. And finally, our result applies to $\gamma$-Golomb rulers, not only Sidon sets.

\begin{corollary}
\label{cor:ud}
  Let $b\in\RR$, $\gamma\in\NN$, $\delta\in(0,\frac12]$ be fixed, and let $\cA =\{0=a_1<a_2<\dots<a_k\}$ be a $\gamma$-Golomb ruler with $k$ elements and $\diam(\cA) \le (k^2 - bk^{3/2})/\gamma$. Let $\ell$ be an integer with $1\le \ell\le k$ and $\min\{\ell,k-\ell\}\le \delta k$. Then
  \[\abs*{a_\ell -\frac{\ell}{k} a_k} < \sqrt{\delta(1-\delta)(2.668-b)} \; \frac{k^{7/4}}{\gamma}+ O_{\gamma,b,\delta}(k^{3/2}).\]
\end{corollary}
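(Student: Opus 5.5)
Write $D=\diam(\cA)=a_k$ and $\Delta=a_\ell-\frac{\ell}{k}a_k$. By the reflection $a\mapsto a_k-a$, which preserves the $\gamma$-Golomb property and the diameter, we may assume $\ell\le\delta k$. Reflection sends $a_\ell$ to $a_k-a_{k+1-\ell}$, so this case reduction costs only $O(D/k)=O(k)$. The plan is to rerun the Fourier/counting argument behind Corollary~\ref{cor:Golomb}, but with the indicator of $\cA$ replaced by a test function that separates $\cA_1=\{a_1,\dots,a_\ell\}$ from $\cA_2=\{a_{\ell+1},\dots,a_k\}$. The displacement $\Delta$ then shows up as an extra mass term in the inequality.

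The key steps are as follows.

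\textbf{Step 1: differences of the two blocks.} I would use the Erd\H{o}s--Tur\'an style count (Theorem~\ref{thm:ET}) separately on the differences inside $\cA_1$, inside $\cA_2$, and between the two blocks. For a window length $u\approx k^{1/2}\cdot(D/k)$, I count the pairs $(a,a')$ with $0<a'-a\le u$, split by block. Each difference $n$ is hit at most $\gamma$ times.

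\textbf{Step 2: compare against interval lengths.} The block $\cA_1$ has $\ell$ elements in an interval of length $a_\ell=\frac{\ell}{k}D+\Delta$, and $\cA_2$ has $k-\ell$ elements in length $\frac{k-\ell}{k}D-\Delta$. By Cauchy--Schwarz, the number of short differences within a block with $m$ elements in length $L$ is at least about $m^2u/L$. Summing the two blocks gives a quadratic form in $\Delta$:
\[
\frac{\ell^2u}{\frac{\ell}{k}D+\Delta}+\frac{(k-\ell)^2u}{\frac{k-\ell}{k}D-\Delta}\approx\frac{ku}{D}\Bigl(k+\frac{k^2\Delta^2}{\ell(k-\ell)D}\cdot\frac{k}{D}\Bigr).
\]
The gain over the unsplit bound is therefore of order $\frac{k^3u\,\Delta^2}{\ell(k-\ell)D^2}$.

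\textbf{Step 3: play the gain against the slack.} Step 1 adds the gain from Step 2 to the lower bound on $\gamma D$ used in the proof of Corollary~\ref{cor:Golomb}. The hypothesis $\gamma D\le k^2-bk^{3/2}$ leaves slack of only $(c-b)k^{3/2}$, where $c$ is the best constant achievable by the method. This forces
\[
\Delta^2\lesssim\frac{\ell(k-\ell)}{k^2}(c-b)\frac{D^2}{k^{1/2}}.
\]
With $D\approx k^2/\gamma$ and $\ell(k-\ell)/k^2\le\delta(1-\delta)$, this gives $\abs{\Delta}\lesssim\sqrt{\delta(1-\delta)(c-b)}\,k^{7/4}/\gamma$. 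All lower-order terms are absorbed into $O_{\gamma,b,\delta}(k^{3/2})$.

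\textbf{Main obstacle.} The hardest step is making the constant come out as $c=2.668$ rather than the cruder value the naive splitting gives. In the unsplit problem the optimal weight (the measure or smoothing kernel of width $\asymp k^{3/2}/\gamma$ used for Corollary~\ref{cor:Golomb}) yields $1.886$. Here the weight cannot be optimized freely, because it must also detect the cross-block imbalance. I would first try the same family of kernels, optimize the parameter jointly with the $\Delta^2$ term, and treat the cross-block differences by the trivial bound. I expect $2.668$ to be the resulting optimum of a one-variable calculus problem. Verifying that all error terms are genuinely $O(k^{3/2})$ uniformly in $\ell\le\delta k$ is routine but must be checked.
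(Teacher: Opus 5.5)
Your skeleton matches the paper's. You split $f=f_1+f_2$ at $\ell$, bound each block's energy below by Cauchy--Schwarz, bound the total energy above by the $\gamma$-Golomb property, and read off a quadratic inequality in $\Delta$. The reflection reduction and the final step $\ell(k-\ell)/k^2\le\delta(1-\delta)$ are fine.

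\textbf{The gap is the constant.} It is the only content beyond the $k^{7/4}$ shape. You leave $2.668$ as an expectation, and Step~2 discards exactly the terms that determine it: the edge terms in the two denominators. Here is how the paper gets it.
\begin{itemize}
\item Take the strong WT-triple $(10,0.944814,0.941259)$. Use Lemma~\ref{lem:big L} to get one nonnegative window $w$ with two tables $t_1,t_2$ of lengths $\sigma_1\approx a_\ell k^{-3/2}$ and $\sigma_2\approx(a_k-a_{\ell+1})k^{-3/2}$.
\item Because $w\ge 0$, the cross term can be dropped: $\EV{\norm{f\ast\tilde w}_2^2}\ge\EV{\norm{f_1\ast\tilde w}_2^2}+\EV{\norm{f_2\ast\tilde w}_2^2}$. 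This is where a \emph{strong} triple is needed.
\item CSB gives $\EV{\norm{f_1\ast\tilde w}_2^2}\ge\ell^2/(\sigma_1+\alpha')$ and $\EV{\norm{f_2\ast\tilde w}_2^2}\ge(k-\ell)^2/(\sigma_2+\alpha')$. These are compared against the unsplit upper bound $\gamma k^{3/2}+\beta'k$.
\end{itemize}
Each block pays its own edge cost $\alpha'$. So the denominators total $Dk^{-3/2}+2\alpha'$ instead of $Dk^{-3/2}+\alpha'$. The baseline the split argument certifies is therefore only $\gamma D\gtrsim k^2-(\beta'/\gamma+2\alpha'\gamma)k^{3/2}$. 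Hence the $c$ of your Step~3 is
$\min_\tau(\beta\tau/\gamma+2\gamma\alpha/\tau)=\sqrt{8\alpha\beta}=\sqrt2\cdot 1.88607\approx 2.6673$,
attained at $\tau=\gamma\sqrt{2\alpha/\beta}$. That is where $2.668$ comes from.

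\textbf{Your account of the constant is off.}
\begin{itemize}
\item The $\sqrt2$ loss is not because the weight ``must also detect the cross-block imbalance''. The window is literally the one used for Corollary~\ref{cor:Golomb}. The loss comes from cutting the ruler, which creates two extra table edges.
\item Step~3 read literally (adding the gain on top of the lower bound from Corollary~\ref{cor:Golomb}) would give $c=1.886$. That is exactly what the paper can only conjecture at the end of Section~\ref{sec:applications}.
\item With the Erd\H{o}s--Tur\'an indicator window of Step~1 you would get $\sqrt8\approx 2.83$, which is too weak.
\end{itemize}

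\textbf{The error terms are not purely routine.} A table cannot be shorter than $2\upsilon'$. So a block spanning less than $2\upsilon'k^{3/2}$ must be padded (the $m_1,m_2$ in the proof of Theorem~\ref{thm:ud}), which adds $\gamma(m_1+m_2)$ inside the square root. The paper removes this by splitting into two cases at $\min\{\ell,k-\ell\}=k^{7/8}$:
\begin{itemize}
\item In one case, Theorem~\ref{thm:motivation} applied to the sub-rulers shows both blocks are long, so the padding vanishes for large $k$.
\item In the other case, $\ell(k-\ell)/k^2=o(1)$.
\end{itemize}
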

The constant $2.668$ here is $\sqrt{2}$ times our bound $1.88607$ on $\overline{b_\infty}$.

Another distribution problem that we can address is that of the largest gap in a Golomb ruler. While $a_\ell$ could possibly be $\Omega(k^{7/4})$ away from its expected position, the largest size of $a_{\ell+1}-a_\ell$ must actually be at most $O(k^{3/2})$:
\begin{corollary}
\label{cor:largest gap}
  Let $b\in\RR$, and suppose that $k\ge \max\{9,2\gamma+2\}$. If $\cA=\{a_1<\dots<a_k\}$ is a $\gamma$-Golomb ruler with $\diam(\cA)\le (k^2-b k^{3/2})/\gamma$ then
  \[\max_{1\le \ell < k} (a_{\ell+1}-a_\ell) \le (4-b)\frac{k^{3/2}}{\gamma}+2. \]
\end{corollary}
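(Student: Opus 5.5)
We bound the largest gap by combining an Erdős--Turán-type counting inequality with the hypothesis on the diameter. Suppose the gap $g=a_{\ell+1}-a_\ell$ is maximal. Write $N=\diam(\cA)$ and $u=\lfloor\sqrt{k}\rfloor$ or a comparable window length $u\asymp k^{3/2}$, to be optimised. The classical counting argument places windows $[x,x+u)$ for $x$ ranging over an interval of length $N+u$. It then counts pairs $(a,a')\in\cA^2$ with $a<a'$ lying in a common window. Each such pair contributes $u-(a'-a)$ windows. Since each nonzero difference $d<u$ occurs at most $\gamma$ times, the sum of all these contributions is at most $\gamma\sum_{0<d<u}(u-d)\approx\gamma u^2/2$. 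By Cauchy--Schwarz, the window occupancies $n_x$ satisfy $\sum_x n_x = ku$, hence $\sum_x\binom{n_x}{2}\ge \frac{(ku)^2}{2(N+u)}-\frac{ku}{2}$. This is the inequality underlying Theorem~\ref{thm:ET}. Combining the two gives $N\gtrsim k^2/\gamma - O(k^{3/2}/\gamma)$, with the $2k^{3/2}$ loss coming from the boundary term $u$ and the diagonal term $ku/2$.

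To exploit the gap, I will only use windows that avoid $(a_\ell,a_{\ell+1})$. Every window $[x,x+u)$ with $x\in(a_\ell,a_{\ell+1}-u)$ contains no elements, so it contributes nothing to $\sum n_x$. The effective range of $x$ over which the mass $ku$ is spread is then $N+u-(g-u)_+$ rather than $N+u$. Cauchy--Schwarz therefore yields
\[
\frac{\gamma u^2}{2}\;\ge\;\frac{(ku)^2}{2(N+2u-g)}-\frac{ku}{2}.
\]
Rearranging gives $g\le N+2u-\frac{k^2u}{\gamma u+k}$. I then substitute $N\le (k^2-bk^{3/2})/\gamma$ and expand $\frac{k^2u}{\gamma u+k}=\frac{k^2}{\gamma}\bigl(1-\frac{k}{\gamma u}+\frac{k^2}{\gamma^2u^2}-\cdots\bigr)$. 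This gives
\[
g\le -\frac{b k^{3/2}}{\gamma}+2u+\frac{k^3}{\gamma^2 u}+\text{(error)}.
\]
Choosing $u=k^{3/2}/\gamma$ balances the two main terms and gives $g\le (3-b)k^{3/2}/\gamma+\dots$, with the error absorbing into the slack up to $4-b$. The stated constant $4$ suggests the authors use a cruder exact inequality rather than expansions. So I will do the algebra exactly: I use $\frac{k^2u}{\gamma u+k}\ge \frac{k^2}{\gamma}-\frac{k^3}{\gamma^2 u}$, which holds exactly. With $u=\lceil k^{3/2}/\gamma\rceil$ this yields $g\le (3-b)k^{3/2}/\gamma+2$-ish, and then a remaining $k^{3/2}/\gamma$ of room covers integer rounding and the $d<u$ versus $d\le u$ conventions. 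The $+2$ comes from the ceiling and the strict/non-strict endpoints.

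The main obstacle is the bookkeeping at the gap. The windows that are partially inside the gap still contain elements, so the clean statement ``the effective range shrinks by $g-u$'' must be verified. Also, $u$ must not exceed $g$, since otherwise the bound is trivially true anyway. I also need $u\le N$ and the requirement that within-window pairs have distinct-difference multiplicity at most $\gamma$; the hypotheses $k\ge 9$ and $k\ge 2\gamma+2$ should be exactly what guarantee $u=\lceil k^{3/2}/\gamma\rceil$ is a legitimate choice (e.g. $u\ge 1$, $u< N$, and the case split $g\le u$ being harmless). I would handle the case $g<u$ separately as trivial, and in the main case carefully count $\sum_x n_x$ over $x\in[a_1-u+1,a_k]\setminus(a_\ell,a_{\ell+1}-u]$.
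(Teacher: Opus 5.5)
Your argument is correct, and it reaches the bound by a more direct route than the paper.

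\textbf{The paper's route.} The paper obtains the corollary by inserting the Erd\H{o}s--Tur\'an triple $(1,1,1)$ into Theorem~\ref{thm:Largest Gap}. It splits $f=f_1+f_2$ at the gap, uses $w\ge 0$ to get $\|f\ast\tilde w\|_2^2\ge\|f_1\ast\tilde w\|_2^2+\|f_2\ast\tilde w\|_2^2$, applies CSB to each piece against its own table, and recombines with Cauchy's inequality. A general witness needs $\sigma_i\ge 2\upsilon'$, so a short piece must be padded; these are the terms $m_1,m_2$. Bounding $m_1+m_2\le 2\upsilon'$ is what makes the constant $\sqrt{8\beta(\alpha+\upsilon)}=4$. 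It also forces the hypothesis $k\ge 9$, which is used to exclude the case $m_1+m_2>2\upsilon'$.

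\textbf{Your route.} Your single Cauchy--Schwarz over the windows meeting $\cA$ is the same inequality with $m_1=m_2=0$. This is legitimate because an indicator table works at every length, so no padding is needed. Done exactly, your computation needs no slack. One has $N-\frac{k^2u}{\gamma u+k}\le -\frac{bk^{3/2}}{\gamma}+\frac{k^3}{\gamma^2u}$. With $u=\lceil k^{3/2}/\gamma\rceil$ you have $2u<2k^{3/2}/\gamma+2$ and $\frac{k^3}{\gamma^2u}\le k^{3/2}/\gamma$. Hence $g<(3-b)k^{3/2}/\gamma+2$ outright. Neither $k\ge 9$ nor $k\ge 2\gamma+2$ is used, so your guess about the role of those hypotheses is off.

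\textbf{Minor points.} The empty windows are those with $x\in(a_\ell,a_{\ell+1}-u]$, not the open interval you first wrote; you state it correctly at the end. The number of windows meeting $\cA$ is at most $N+2u-g$ even when $g<u$, since it is then at most $N+u$. So no separate case is needed. Also, $u=k^{3/2}/\gamma$ does not balance $2u$ against $k^3/(\gamma^2u)$. The balancing choice $u=\lceil k^{3/2}/(\sqrt2\,\gamma)\rceil$ gives $g\le(2\sqrt2-b)k^{3/2}/\gamma+2$. That is the constant (Cilleruelo's claimed $2\sqrt2$) which the paper leaves as a conjecture for $(1,1,1)$.

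\textbf{Comparison.} The paper's framework buys uniformity with Theorem~\ref{thm:ud} and applies to any strong WT-triple. But by Theorem~\ref{thm:Delta lb}, no triple improves on $(1,1,1)$ in Theorem~\ref{thm:Largest Gap}. So for this particular statement, your direct Erd\H{o}s--Tur\'an counting is the sharper tool.
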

The constant $4-b$ can be slightly improved with our methods, though doing so involves quite a bit more calculation, so we do not write out a full proof. In the case of Sidon sets (that is, $\gamma=1$), Cilleruelo~\cite{2000.Cilleruelo-b} gave a result of this form without an explicit constant, and later~\cite{2010.Cilleruelo} claimed the stronger bound $\max_{1\le\ell<k}(a_{\ell+1}-a_\ell)\le (2\sqrt{2}-b)k^{3/2}+O(k)$ with an incorrect proof. We suspect that the latter result is still true, and that a variant of our argument could extend that result to $\gamma$-Golomb rulers. This is discussed further at the end of Section~\ref{sec:applications}.

\subsection{Structure of the paper}

In Section~\ref{sec:Theorems}, we explain the framework we used to obtain our main theorems, which are stated in terms of ``WT-triples''. Plugging explicit WT-triple values into our main theorems gives the various corollaries listed earlier in the introduction. In that section, we also give some preliminary results that will be required for the proofs.

In Section~\ref{sec:BIGproof}, we prove our first main result, on the diameter of Golomb rulers. Then, in Section~\ref{sec:applications}, we modify our method to prove our results about the distribution of Golomb rulers.

In Section~\ref{sec:lowerbounds}, we prove some \emph{lower} bounds on WT-triples, which represent limits to what our methods can prove. In Section~\ref{sec:optimization}, we then turn to the computational problem of finding good WT-triples. Finally, in Section~\ref{sec:questions}, we give some open questions.

\subsection{Tool and computational resource disclosure}
This work grew out of efforts to use \pn{AlphaEvolve}~\cite{AlphaEvolve} to improve the solution of the optimization problem arising in~\cite{2025.Carter&Hunter&Obryant}. Under the pressure of reformulating the problem in a manner appealing to \pn{AlphaEvolve}, and while interacting with \pn{AlphaEvolve}, we were led to the new theoretical framework that underpins this work. This framework then resulted in an optimization problem that can be solved with standard optimization software. Our final optimization and verification code is in \pn{Python}, and it as well as all data relevant to this paper is available at \url{https://github.com/dcartermath/sidon-error}. This code is completely human-written, and the relevant details are described in length in Section~\ref{sec:optimization}. 

Section~\ref{sec:optimization}, describing the computation in detail, was also given to \pn{Claude Opus 4.8}, initiating a vibe coding implementation in \pn{Julia}, whose output was checked with human-coded verification in \pn{Mathematica 15}. The results were roughly the same quality, both in terms of computational speed and the consequent bounds.

\pn{Mathematica 15} was used to generate some images in this paper, and \pn{Claude Sonnet} was used to draw Figure~\ref{fig:b_k values and bounds} in \pn{Ti\emph{k}Z}.

\section{Statements of Theorems} \label{sec:Theorems}
All of our theorems are stated in terms of WT-triples (``WT'' stands for ``Window and Table''), and all of our results rely upon their accurate computation. We define WT-triples in Subsection~\ref{subsec:WT-triples}, and discuss the rigorous computation of them in Section~\ref{sec:optimization}. Each of the corollaries given in Section~\ref{sec:introduction} is obtained by straightforward substitution of a particular WT-triple into one of our theorems, except Corollary~\ref{cor:Sidon}, which uses some additional computation.

\subsection{Notation and results from the literature} \label{subsec:setup}
First, we fix some notation. A random function $f$ consists of deterministic functions $f_1,\dots,f_K$ and nonnegative real numbers $p_1,\dots,p_K$ with $\sum p_i =1$, where the probability that $f=f_i$ is $p_i$. If $K=1$ we say that $f$ is \emph{deterministic}. The functions $f_i$ are called the \emph{atoms} of $f$. We have
    \[\EV{f(x)}= \EV{f}(x) = \sum_{i=1}^K p_i f_i(x), \qquad\text{and} \qquad \EV{\norm{f}_2^2} = \sum_{i=1}^K p_i\left(\int_{\RR} f_i(x)^2\,dx\right).\]
Different random functions are not assumed to be independent. That is, by ``random functions $ w, t$'', we mean that there are two finite lists of functions $w_1,\dots,w_K$, $t_1,\dots,t_K$, and a list of probabilities $p_1,\dots,p_K$, so that $\Prob{w=w_i\text{ and } t=t_i} = p_i$.

All of our functions will be bounded with compact support, so issues of convergence are not relevant. We denote the reverse of a function with a tilde, i.e., $\tilde f(x) \coloneqq f(-x)$. For any pair of functions, we have the inner product
    \[\langle f,g\rangle \coloneqq \int_{\RR} f(x)g(x)\,dx.\]
We denote convolution with an asterisk:
    \[f\ast g (x) \coloneqq \int_{\RR} f(y) g(x-y)\,dy.\]
which is commutative, associative, and satisfies the Adjoint Formula
    \[\langle f\ast \tilde g, h\rangle = \langle f, h\ast g \rangle.\]
Further, by Fubini's Theorem
    \[\int_{\RR} f \ast g(x) \,dx = \left(\int_{\RR}f(x)\,dx\right)\left(\int_{\RR}g(x)\,dx\right),\]
and in particular
    \[\int_{\RR} g \ast \tilde g(x) \,dx = \left(\int_{\RR}g(x)\,dx\right)^2.\]
Moreover, for all $x$ the autocorrelation $w\ast \tilde w$ is majorized by the value at 0. To see this, let $h(t)=g(x+t)$ and use Cauchy--Schwarz--Bunyakovsky Inequality\footnote{Cauchy gave the inequality for finite sums in 1821, and this was extended to integrals by his former student Bunyakovsky in 1859. Schwarz extended the inequality to inner product spaces in 1888.} (hereafter \emph{CSB}) as follows:
    \[ |g\ast \tilde g (x)| 
    = |\langle g, h\rangle|
    \le \norm{g}_2 \cdot \norm{h}_2 = \norm{g}_2^2 = g\ast \tilde g(0).\]

\subsection{WT-triples}
\label{subsec:WT-triples}
WT-triples are defined below. As a rule, we want WT-triples for which $\sqrt{\alpha\beta}$ is as small as possible. For explicit results, we want $\upsilon\sqrt{\alpha/\beta}$ to also be sufficiently small.

\begin{definition}[WT-triple]
\label{def:WT-triple}
    We call a triple of positive real numbers $(\upsilon ,\alpha,\beta)$ a \emph{WT-triple} if there is a real number $\sigma \ge 2\upsilon$ and a pair of random integrable functions $w,t$, supported on $[0,\upsilon]$ and $[-\upsilon,\sigma]$ respectively, and 
    \begin{enumerate}[label=({\roman*}),noitemsep]
        \item $t(x)=1$ for $x \in (0,\sigma-\upsilon)$.\label{cond: t flat}
        \item $\int w =1$.\label{cond: w normalized}
    \end{enumerate}
    \begin{enumerate}[resume,label=({\roman*}),noitemsep]
        \item $\EV{ \norm{t}_2^2} \le \sigma + \alpha$.\label{cond: def alpha}
        \item $\EV{\norm{w}_2^2} \le \beta$. \label{cond: def beta}
    \end{enumerate}
    \begin{enumerate}[resume,label=({\roman*}),noitemsep]
    \item $\EV{w\ast \tilde w(x)} \ge 0$ for $x\in \RR$. \label{cond: w positive}
    \item $\EV{ w \ast t(x)} \ge 1$ for $x\in (0,\sigma)$. \label{cond: t*w >1}
    \end{enumerate}
    We say that $(\sigma;w,t)$ \emph{witnesses} the WT-triple $(\upsilon ,\alpha,\beta)$. 

    If $(\upsilon,\alpha,\beta)$ is a WT-triple with witness $(\sigma,w,t)$ whose window $w$ and table $t$ are nonnegative, we call $(\upsilon,\alpha,\beta)$ a \emph{strong WT-triple}.
\end{definition}

The computation of WT-triples is a substantive problem that we address with care in Section~\ref{sec:optimization}. The WT-triples that we actually use are stated in Lemma~\ref{lem:WT triples} and in a table in Table~\ref{tab:WT examples} for the readers' convenience. They are proved by simply exhibiting a witness $(\sigma;w,t)$ that satisfies the definition, and so Lemma~\ref{lem:WT triples} is logically independent of the theorems. The actual witnesses are depicted in Appendix~\ref{sec:special triples}, and they are available in the GitHub repo \url{https://github.com/dcartermath/sidon-error} in infinite precision.

\begin{lemma}[WT-triples]
\label{lem:WT triples}
    The following are strong WT-triples: 
    \[ (1,1,1), \quad(3,1.630563,0.547610), \quad(10,0.944814,0.941259). \] 
\end{lemma}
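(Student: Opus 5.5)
The plan is to exhibit, for each of the three triples, an explicit witness $(\sigma;w,t)$ with $w,t\ge 0$ and verify conditions \ref{cond: t flat}--\ref{cond: t*w >1} of Definition~\ref{def:WT-triple}. Because the lemma only asserts existence, it is logically independent of the theorems. Proving it is a finite certification task.

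For $(1,1,1)$ I would write the witness down by hand. Take $\upsilon=1$, $\sigma=2$, and the deterministic window $w=\1_{[0,1]}$, so $\int w=1$ and $\norm{w}_2^2=1=\beta$. The autocorrelation $w\ast\tilde w$ is a tent function, hence nonnegative, which gives \ref{cond: w positive}. For the table take $t=\1_{[-1,2]}$. Then $t=1$ on $(0,\sigma-\upsilon)=(0,1)$, giving \ref{cond: t flat}, and $\norm{t}_2^2=3=\sigma+1$, giving \ref{cond: def alpha} with $\alpha=1$. Finally, $w\ast t(x)=\lambda([x-1,x]\cap[-1,2])=1$ for $x\in(0,2)$, giving \ref{cond: t*w >1}. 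This case is immediate.

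For $(3,1.630563,0.547610)$ and $(10,0.944814,0.941259)$ I would take deterministic (or few-atom) witnesses that are piecewise constant on a fine grid of mesh $h$ (with $\upsilon/h$ and $\sigma/h$ integers). The point of piecewise-constant functions is that every quantity in the definition is then exactly computable in rational arithmetic:
\begin{itemize}[noitemsep]
\item $\int w$ and $\norm{w}_2^2$ are finite sums;
\item $\norm{t}_2^2$ is a finite sum;
\item $w\ast\tilde w$ and $w\ast t$ are continuous piecewise-linear functions with breakpoints on the grid.
\end{itemize}
So the pointwise conditions \ref{cond: w positive} and \ref{cond: t*w >1} reduce to checking finitely many grid values. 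For \ref{cond: t*w >1}, linearity between grid points makes the infimum over $(0,\sigma)$ equal to the minimum over grid points in $[0,\sigma]$, taking one-sided limits at the endpoints where needed. The steps, in order, are:
\begin{enumerate}[noitemsep]
\item Fix $\sigma$.
\item Obtain candidate values of $w,t$ on the grid by numerically solving the convex problem of minimizing $\norm{t}_2^2-\sigma$ subject to a bound on $\norm{w}_2^2$ and the linear constraints. This is a biconvex problem (linear in $t$ for fixed $w$), so I would alternate between the two variables or use the paper's optimization from Section~\ref{sec:optimization}.
\item Round the solution to rationals.
\item Enforce the normalization exactly by rescaling $w$ and slightly inflating $t$ on the non-flat part, so that $w\ast t\ge1$ holds with a small margin.
\item Verify all six conditions in exact arithmetic, and check that the reported $\alpha,\beta$ are upper bounds for the certified values.
\end{enumerate}

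The main obstacle is condition \ref{cond: w positive}. Nonnegativity of $w$ and $t$ makes strongness automatic, and with $w\ge0$ the autocorrelation $w\ast\tilde w$ is automatically nonnegative, so \ref{cond: w positive} is in fact free here. The real difficulty is therefore robustness of \ref{cond: t*w >1} after rounding, and getting $\alpha,\beta$ as small as stated. The constraint $w\ast t\ge1$ is tight on much of $(0,\sigma)$ for an optimizer, so naive rounding breaks it. I would first solve the problem with the target $1+\varepsilon$ in place of $1$, then certify exactly and absorb the $\varepsilon$ loss into the last digits of $\alpha$ and $\beta$. The witnesses themselves are the ones recorded in Appendix~\ref{sec:special triples}.
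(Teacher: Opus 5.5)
Your proposal is correct and is essentially the paper's proof. The triple $(1,1,1)$ is witnessed by exactly $w=\1_{[0,1]}$, $t=\1_{[-1,2]}$, $\sigma=2$, and the other two triples are certified by exhibiting rational step-function witnesses on a uniform grid (those of Appendix~\ref{sec:special triples}), verifying every condition in exact arithmetic, and using the stated decimals of $\alpha,\beta$ as upper bounds for the certified values. One caution: do not expect deterministic witnesses to reach the second and third triples---the paper reports that deterministic witnesses give worse values of $2\sqrt{\alpha\beta}$, and its certified witnesses are genuinely random, with $K=5$ and $K=6$ atoms respectively.
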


\begin{table}
    \centering
    \begin{tabular}{ccc|cc|l}
        $\upsilon$ & $\alpha$ & $\beta$ & $2\sqrt{\alpha\beta}$ & $4(1+\upsilon\sqrt{\frac \beta\alpha})^2$ & Comment \\ \hline
        1 & 1 & 1 & 2 & 16 & Erd\H{o}s--Tur\'an's result, for Corollary~\ref{cor:largest gap}\\
        % 2 & 0.943972 & 0.949864 & 1.8939 & 36.1 & smallest deterministic $2\sqrt{\alpha\beta}$ \\
        %22/13 & 0.9474961 & 0.9474927 & 1.8950 & 28.99 & $k_0<29$ \\
        %10 & 0.923432 & 0.9662 & 1.88915 & 504.4 & smallest $2\sqrt{\alpha\beta}$ \\
        3 & 1.630563 & 0.547610 & 1.88989 & 29.999 & $k_0<30$, for Corollary~\ref{cor:Sidon}\\
        10 & 0.944814 & 0.941259 & 1.88607 & 482.4 & smallest $2\sqrt{\alpha\beta}$, for Corollaries~\ref{cor:asymptotic} and~\ref{cor:ud}
    \end{tabular}
    \caption{Examples of strong WT-triples.}
    \label{tab:WT examples}
\end{table}

\subsection{The theorems}
We are now positioned to state our main theorem, which gives a lower bound on the diameter of a $\gamma$-Golomb ruler (equivalently, an upper bound on the density).

\begin{restatable}[Bound on $\gamma$-Golomb rulers]{theorem}{BigBound}
\label{thm:motivation}
Fix a positive integer $\gamma$. If $\cA$ is a $\gamma$-Golomb ruler with $k\ge \gamma+1$ elements, and $(\upsilon,\alpha,\beta)$ is a WT-triple, and \mbox{$k\ge 4\big(1+\upsilon {\sqrt{\beta/\alpha}} \big)^2$}, then
\begin{equation}\label{eq:explicit}
    \diam(\cA) \ge \frac{k^2}{\gamma} \left(\frac{1}{1 + (k-\gamma)k^{-3/2}\sqrt{\alpha\beta}}-\frac{\sqrt{\alpha\beta}}{ k^{1/2}}\right) -1.
\end{equation}
In particular, if $\alpha\beta\le1$ also, then
\begin{equation}\label{eq:puiseux}
\begin{split}
    \diam(\cA) &\ge \frac 1\gamma \cdot k^2 -\frac{2\sqrt{\alpha\beta}}{\gamma} \cdot k^{3/2} +\frac{\alpha\beta}{\gamma} \cdot k \\
    &\qquad + \frac{\sqrt{\alpha\beta}(\gamma-\alpha\beta)}{\gamma}\cdot \sqrt{k} + \frac{\alpha\beta(\alpha\beta-2\gamma)-\gamma}{\gamma}.
    %- O(k^{-1/2}).
\end{split}
\end{equation}
If $\cA\subseteq [1,N]$ (so $N \ge \diam(\cA)-1$), we have
\begin{equation}\label{eq:inverse puiseux}
k \le \sqrt{\gamma N} + \sqrt{\alpha\beta}\sqrt[4]{\gamma N}+\frac{\alpha\beta}{2}+O(N^{-1/4}).
\end{equation}
\end{restatable}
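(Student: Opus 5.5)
The plan is an Erd\H{o}s--Tur\'an style double counting, with smoothing done by a rescaled witness $(\sigma;w,t)$ of the WT-triple. Write $\cA=\{a_1<\dots<a_k\}$ and $D=\diam(\cA)$, and translate so that $a_1=0$. To avoid Riemann-sum errors, I replace the point masses by unit blocks: let $f=\sum_i \1_{[a_i,a_i+1)}$. Then $f$ is supported on $[0,D+1]$, $\int f=k$, and $\norm{f}_2^2=k$. Also $f\ast\tilde f(x)=\sum_{n} r(n)\,\Lambda(x-n)$, where $\Lambda$ is the unit tent function and $r(n)=\#\{(a,b)\in\cA^2:a-b=n\}$. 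Note that $r(0)=k$ and $r(n)\le\gamma$ for $n\ne0$. This continuous replacement is where the final ``$-1$'' in \eqref{eq:explicit} comes from.

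Fix a scale $\lambda>0$ and set $w_\lambda(x)=\lambda^{-1}w(x/\lambda)$ and $t_\lambda(x)=t(x/\lambda)$. Then $\int w_\lambda=1$, $\EV{\norm{w_\lambda}_2^2}\le\beta/\lambda$, and $\EV{\norm{t_\lambda}_2^2}\le\lambda(\sigma+\alpha)$. By condition \ref{cond: t*w >1}, $\EV{w_\lambda\ast t_\lambda}\ge1$ on $(0,\lambda\sigma)$. I choose $\lambda$ so that $\lambda\sigma=D+1$, which is possible since $\sigma$ is a free parameter $\ge 2\upsilon$ once the witness is rescaled. Then
\[
k=\int f\le \EV{\langle f,w_\lambda\ast t_\lambda\rangle}=\EV{\langle f\ast\tilde w_\lambda,t_\lambda\rangle}\le \EV{\norm{f\ast\tilde w_\lambda}_2^2}^{1/2}\,\EV{\norm{t_\lambda}_2^2}^{1/2},
\]
using the adjoint formula and then CSB, applied jointly over $x$ and the randomness.

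For the window factor, $\norm{f\ast\tilde w_\lambda}_2^2=\langle f\ast\tilde f, w_\lambda\ast\tilde w_\lambda\rangle$. Let $W=\EV{w_\lambda\ast\tilde w_\lambda}$, which is $\ge0$ by \ref{cond: w positive}. Since $r(n)\le\gamma$ for $n\ne 0$, positivity of $W$ gives
\[
\langle f\ast\tilde f,W\rangle\le (k-\gamma)\langle\Lambda,W\rangle+\gamma\sum_{n\in\ZZ}\langle\Lambda(\cdot-n),W\rangle.
\]
The sum over $n$ equals $\gamma\int W=\gamma$, because the integer translates of $\Lambda$ sum to $1$; this is exactly why I passed to $f$. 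The first term is at most $(k-\gamma)W(0)\le(k-\gamma)\beta/\lambda$, by the majorization of autocorrelations at $0$. Altogether
\[
k^2\le \lambda(\sigma+\alpha)\Big(\gamma+\frac{(k-\gamma)\beta}{\lambda}\Big).
\]

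The remaining work is the choice of the scale and the algebra. Since $\lambda\sigma=D+1$, the bound reads $k^2\le (D+1+\lambda\alpha)\big(\gamma+(k-\gamma)\beta/\lambda\big)$. The free parameter is really $\lambda$, with $\sigma=(D+1)/\lambda$ constrained by $\sigma\ge2\upsilon$, i.e.\ $\lambda\le (D+1)/(2\upsilon)$. Optimizing gives $\lambda\approx\sqrt{\beta/\alpha}\,k^{1/2}$. I would take exactly $\lambda=\sqrt{\beta/\alpha}\,k^{1/2}$ and solve the resulting inequality for $D$, which should give \eqref{eq:explicit} after rearranging.

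The main obstacle is the admissibility constraint $\lambda\upsilon\le(D+1)/2$. I would handle it by a bootstrap: the case $\upsilon=1$-type trivial bound (or a crude a priori bound such as $D+1\ge k^2/(4\gamma)$-ish from the same argument) together with the hypothesis $k\ge4(1+\upsilon\sqrt{\beta/\alpha})^2$ should guarantee $D+1\ge2\upsilon\sqrt{\beta/\alpha}\,k^{1/2}$. If the bound fails, then $D$ is already so small that \eqref{eq:explicit} contradicts the elementary estimate. Checking that this threshold matches the stated constant is the delicate step.

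Finally, \eqref{eq:puiseux} follows by expanding $1/(1+x)$ to a few terms with $x=(k-\gamma)k^{-3/2}\sqrt{\alpha\beta}$, using $\alpha\beta\le1$ to sign the remainder. Then \eqref{eq:inverse puiseux} is obtained by inverting that expansion with $N\ge D-1$.
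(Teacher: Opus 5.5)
Your core inequality is correct. $k^2\le (D+1+\lambda\alpha)\big(\gamma+(k-\gamma)\beta/\lambda\big)$ is the paper's key inequality written in unscaled coordinates. The paper instead compresses $f$ by $k^{3/2}$ and rescales the triple by $\tau=\gamma\sqrt{\alpha/\beta}$ via Lemma~\ref{lem:tau slide}, which corresponds to your $\lambda=k^{3/2}/\tau$.

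The proof breaks at the choice of scale. With $\lambda=\sqrt{\beta/\alpha}\,k^{1/2}$, the second factor is about $\sqrt{\alpha\beta}\,k^{1/2}$. You then only get $D\gtrsim k^{3/2}/\sqrt{\alpha\beta}$, which is of the wrong order and is not \eqref{eq:explicit}. Expanding gives $D+1\gtrsim k^2/\gamma-\big(\lambda\alpha+k^3\beta/(\gamma^2\lambda)\big)$, so the right choice is $\lambda=\sqrt{\beta/\alpha}\,k^{3/2}/\gamma$. The window must have length of order $k^{3/2}$, as in Erd\H{o}s--Tur\'an. With this $\lambda$ you get $\lambda\alpha=\sqrt{\alpha\beta}\,k^{3/2}/\gamma$ and $\beta/\lambda=\gamma\sqrt{\alpha\beta}\,k^{-3/2}$, and solving for $D$ gives \eqref{eq:explicit} exactly.

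The corrected scale also changes the admissibility step you left open. The condition $\sigma=(D+1)/\lambda\ge 2\upsilon$, which is what lets Lemma~\ref{lem:big L} stretch the table, now reads $D+1\ge 2\upsilon\sqrt{\beta/\alpha}\,k^{3/2}/\gamma$. So you need an a priori lower bound on $D$ of order $k^2/\gamma$ with the correct second-order term.

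A crude bound $D+1\ge k^2/(4\gamma)$ is not enough. It gives admissibility only for $k\ge 64\upsilon^2\beta/\alpha$, which is worse than the stated threshold whenever $\upsilon\sqrt{\beta/\alpha}>1/3$. That includes the triple $(3,1.630563,0.547610)$ needed to reach $k_0<30$.

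The paper uses Theorem~\ref{thm:ET}, $D\ge(k^2-2k^{3/2})/\gamma$. This is your own argument run with the classic witness $w=\1_{[0,1]}$, $t=\1_{[-1,\sigma]}$, which satisfies $w\ast t=1$ on $[0,\sigma]$ and $\norm{t}_2^2=\sigma+1$ for every $\sigma$. So there is no admissibility constraint there and no circularity. Plugging it in, admissibility becomes $k^{1/2}-2\ge 2\upsilon\sqrt{\beta/\alpha}$, which is exactly $k\ge 4(1+\upsilon\sqrt{\beta/\alpha})^2$.

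With these two repairs, your argument coincides with the paper's proof. Your outlines for \eqref{eq:puiseux} and \eqref{eq:inverse puiseux} are fine. The paper writes the remainder exactly as $c^2E(\gamma,c,k)/(c(k-\gamma)+k^{3/2})$ with $c=\sqrt{\alpha\beta}$, and checks $E\ge 0$ using $c\le 1$ and $\gamma<k$.
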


For the reader who wants the shortest route to Corollaries~\ref{cor:Sidon} and~\ref{cor:asymptotic}, our headline results, we quickly give its derivation from Theorem~\ref{thm:motivation} using specific WT-triples.

\begin{proof}[Proof of Corollary~\ref{cor:Sidon}]
Invoke Theorem~\ref{thm:motivation} with $\gamma=1$ and the WT-triple 
\[ (3,1.630563,0.547610) \]
from Lemma~\ref{lem:WT triples}. This gives
\begin{equation}\label{eq:Sidon bound}
\diam(\cA) 
    \ge k^2\left(\frac{1}{1 + 0.944941/\sqrt k}-\frac{0.944941}{ \sqrt k} \right) -1
\end{equation}
for $k\ge 4(1+3\sqrt{0.547610/1.630563})^2 \approx 29.999$. This gives the first inequality of Corollary~\ref{cor:Sidon} for $k\ge 30$. For $1\le k \le 28$, we appeal to the explicit and massive computation of optimal Golomb rulers performed by Distributed.net~\cite{distributed.net}, whose values are archived on Wikipedia~\cite{Wikipedia-Golomb}. That dataset contains the minimum-diameter Sidon set with 28 elements, and its diameter is 585. From this, we know that every 29 element Sidon set has diameter at least 586, so~\eqref{eq:Sidon bound} holds.

For the second inequality, observe for $c>0$ that
\[k^2\left(\frac{1}{1+c/\sqrt k}-\frac{c}{\sqrt k}\right)-1 = k^2-2c k^{3/2}+\frac{c^2 k}{1+c/\sqrt k} \ge k^2-2c k^{3/2}.\]

To verify that  $|\cA|\le N^{1/2}+0.945 N^{1/4}+0.447$, where $N=\diam(\cA)+1$, from Line~\eqref{eq:Sidon bound}, observe that Line~\eqref{eq:Sidon bound} implies an upper bound on $k$ in terms of $N$, and that replacing $k$ with $N^{1/2}+0.945 N^{1/4}+0.447$ gives an inequality that is polynomial in $N^{1/4}$ (after some rearranging and squaring to get rid of the $\sqrt{k}$ terms), and can be algorithmically verified to have no solutions with $N\ge 4$. That the claimed inequality holds with $N=1,2,3$ is trivial, as for these $N$, we have $N^{1/2}+0.945 N^{1/4}+0.447>N$.
\end{proof}

\begin{proof}[Proofs of Corollaries~\ref{cor:asymptotic} and~\ref{cor:Golomb}]
The reasoning for these corollaries is much the same as the proof of Corollary~\ref{cor:Sidon}, except that we use the WT-triple $(10,0.944814,0.941259)$ from Lemma~\ref{lem:WT triples} instead. 

For the last inequality of Corollary~\ref{cor:asymptotic}, we note that $k\ge 483$ implies that
    \[ \gamma N \ge \gamma (1+\diam(\cA)) \ge k^2\left(\frac{1}{1+0.94304\cdot k\cdot k^{-3/2}}-\frac{0.94304}{\sqrt{k}}\right) \ge 2\cdot 10^5.\]
As in the proof of Corollary~\ref{cor:Sidon}, replacing $\diam(\cA)$ with $N-1$, $k-\gamma$ with $k$, and $k$ with $\sqrt{\gamma N}+0.94304 \sqrt[4]{\gamma N}+0.44466$ yields an inequality that is rational in $(\gamma N)^{1/4}$ and is not valid for any $\gamma N\ge 2\cdot 10^5$. Thus $k<\sqrt{\gamma N}+0.94304 \sqrt[4]{\gamma N}+0.44466$.
\end{proof}

Our uniform distribution result is next. Note in what follows that the condition $\diam(\cA) \le (k^2-b k^{3/2})/\gamma$ is not satisfiable for large $k$ if $b> 1.88607$ by Corollary~\ref{cor:Golomb}. There is no requirement for $b$ to be nonnegative.

\begin{restatable}[Dense $\gamma$-Golomb rulers are uniformly distributed]{theorem}{thmud}
\label{thm:ud}
    Let $(\upsilon,\alpha,\beta)$ be a strong WT-triple. 
    Let $b\in\RR,\gamma\in\NN,\delta\in(0,\frac12]$ be fixed, and let $\cA$ be a $\gamma$-Golomb ruler with 
        \[\cA=\{0=a_1<\dots<a_{k}\}, \qquad \diam(\cA) \le (k^2-bk^{3/2})/\gamma.\]
    Let $\ell=\ell(k)$ be an integer with $1\le \ell \le k$ and $\min\{\ell,k-\ell\}\le \delta (k+1)$. Then (as $k\to\infty$)
    \begin{equation}\label{eq:asymp uniform new}
        \left| a_{\ell} - \frac{\ell}{k}\, a_k\right| < k^{7/4} \frac{\sqrt{\delta(1-\delta)}}{\gamma}\sqrt{\sqrt{8\alpha\beta}-b} +O_{\gamma,b,\delta}(k^{3/2}).
    \end{equation}
\end{restatable}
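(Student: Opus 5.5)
We plan to rerun the argument behind Theorem~\ref{thm:motivation}, but with the counting measure of $\cA$ tested against a two-piece weight that records where the $\ell$-th element sits. Set $D=a_k=\diam(\cA)$ and let $\mu=\sum_{a\in\cA}\delta_a$. Fix a scale $u=\upsilon\,c\,\sqrt{k}$, with $c$ a free parameter to be optimized (the same scaling used in the proof of Theorem~\ref{thm:motivation}). Let $w_u(x)=u^{-1}w(x/u)$ and $t_u(x)=t(x/u)$ be rescaled copies of a witness, with $\sigma u$ taken comparable to $D$.

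The core identity is the same as for the diameter bound. On one side, $\EV{\langle \mu\ast\tilde\mu, w_u\ast\tilde w_u\rangle}$ is bounded above using the $\gamma$-Golomb property: off-diagonal differences have multiplicity at most $\gamma$, and condition~\ref{cond: w positive} controls the sum over all differences. This is where $\beta$ enters. On the other side, $\EV{\norm{\mu\ast w_u}_2^2}$ is bounded below by pairing $\mu\ast w_u$ against $t_u$ and applying CSB. Conditions~\ref{cond: t*w >1}, \ref{cond: t flat} and~\ref{cond: def alpha} show that $\langle\mu\ast w_u,t_u\rangle\ge k$ while $\norm{t_u}_2^2\le u(\sigma+\alpha)$.

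For the distribution result we modify the table. Split $[0,D]$ at the point $a_\ell$, where roughly $\ell$ elements lie to the left and $k-\ell$ to the right. Replace $t$ by a combination $t_u=\lambda_1 t^{(1)}+\lambda_2 t^{(2)}$. Here $t^{(1)}$ is a flat table covering $[0,a_\ell]$ and $t^{(2)}$ covers $[a_\ell,D]$, each with the WT boundary behaviour at its ends. Strong triples (nonnegative $w,t$) let us glue the pieces. The overlap near $a_\ell$ costs only $O(u)=O(k^{1/2})$ in mass and $O(u)$ in $L^2$, and after multiplying by $k$ this is absorbed into $O(k^{3/2})$.

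CSB then gives
\[
\frac{(\lambda_1\ell+\lambda_2(k-\ell))^2}{\lambda_1^2a_\ell+\lambda_2^2(D-a_\ell)+O(u)}\le \EV{\norm{\mu\ast w_u}_2^2}\le \frac{k}{u}\beta+\gamma(\ldots).
\]
Optimizing over $\lambda_1,\lambda_2$ turns the left side into the Cauchy–Schwarz expression $\ell^2/a_\ell+(k-\ell)^2/(D-a_\ell)$, up to lower-order terms.

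The key algebraic step is the identity
\[
\frac{\ell^2}{x}+\frac{(k-\ell)^2}{D-x}=\frac{k^2}{D}+\frac{(kx-\ell D)^2}{D\,x(D-x)}.
\]
This separates the main term, which is exactly what produced the diameter bound, from a positive deviation term in $\epsilon=a_\ell-\frac{\ell}{k}D$. Plugging into the chain and using $D\le(k^2-bk^{3/2})/\gamma$, the excess slack is available because $D$ is at least $b'k^{3/2}/\gamma$ smaller than the extremal $k^2/\gamma$-scale value. Here $b'=2\sqrt{\alpha\beta}$ in the optimized single-table bound, and the $\sqrt8$ should appear from choosing $c$ differently, balancing the split table's loss.

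This yields an inequality of the form
\[
\frac{k^2\epsilon^2}{D\,a_\ell(D-a_\ell)}\lesssim \frac{\gamma(\sqrt{8\alpha\beta}-b)}{k^{1/2}}.
\]
Since $a_\ell(D-a_\ell)\approx \delta(1-\delta)D^2$, up to a harmless monotonicity argument for $\min\{\ell,k-\ell\}\le\delta(k+1)$, and $D\approx k^2/\gamma$, we get $\epsilon^2\lesssim \delta(1-\delta)(\sqrt{8\alpha\beta}-b)k^{7/2}/\gamma^2$. This is the stated bound.

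\textbf{Main obstacle.} I expect the hardest part to be the bookkeeping of the constant, namely why the relevant quantity is $\sqrt{8\alpha\beta}$ rather than $2\sqrt{\alpha\beta}$. I believe this comes from having to fix the scale $c$ before knowing $\epsilon$. Since the deviation term is only $O(k^{3/2})$ relative to the $k^2$ main term, one would optimize $c$ jointly with the slack, getting a factor $\sqrt2$ loss. I would first try $c$ chosen as in Theorem~\ref{thm:motivation}, check whether that already yields $2\sqrt{\alpha\beta}$ (which would be stronger), and otherwise rebalance.

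I also need to replace $\frac{\ell}{k}D$ by $\frac{\ell}{k}a_k$, which is trivial since $a_1=0$. Finally, all error terms from truncating the tables at $a_\ell$ must be checked to be $O_{\gamma,b,\delta}(k^{3/2})$ after dividing the $\epsilon^2$ bound by $k^{7/4}$.
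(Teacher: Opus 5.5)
Your skeleton matches the paper's: you split at $a_\ell$, use two tables sharing one window via Lemma~\ref{lem:big L}, apply Cauchy to get $\ell^2/x+(k-\ell)^2/(D-x)$, and use your identity isolating $(kx-\ell D)^2$. The final algebra from $k^2\epsilon^2/(Da_\ell(D-a_\ell))\lesssim\gamma(\sqrt{8\alpha\beta}-b)k^{-1/2}$ is also fine. The gap is in the size of the split cost, and that cost is exactly what determines the constant.

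Your window scale is wrong. It must be $u\asymp k^{3/2}/\gamma$ (the paper rescales by $k^{3/2}$ and uses width $\upsilon'k^{3/2}$), not $\sqrt k$. With $u\asymp\sqrt k$ the diagonal term $k\beta/u$ swamps $\gamma$, and $k^2\le(k\beta/u+\gamma)(D+\alpha u)$ only gives $D\gtrsim k^{3/2}$. At the correct scale every edge effect is of order $k^{3/2}$. The edges of each table cost $\alpha'k^{3/2}$ in $\|t\|_2^2$, and your overlap near $a_\ell$ is of the same size. Next to $D\asymp k^2$ these have relative size $k^{-1/2}$, which is exactly the size of the slack that controls $\epsilon$. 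So they change the constant in front of $k^{7/4}$ and are not absorbed into $O(k^{3/2})$.

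This is where $\sqrt{8\alpha\beta}$ comes from, not from fixing $c$ before knowing $\epsilon$. You half-see it with ``balancing the split table's loss'', but then declare that loss negligible. The split creates two extra table edges, so the denominators of the two CSB bounds sum to about $a_k+2\alpha'k^{3/2}$ instead of $a_k+\alpha'k^{3/2}$. The leading slack becomes $\beta'/\gamma+2\gamma\alpha'-b$, and $\tau=\gamma\sqrt{2\alpha/\beta}$ in Lemma~\ref{lem:tau slide} minimizes $\beta'/\gamma+2\gamma\alpha'$ at $\sqrt{8\alpha\beta}$. If the split were free, as you assert, your argument would give $2\sqrt{\alpha\beta}$, which the paper only conjectures.

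Moreover, your glued table $T=\lambda_1t^{(1)}+\lambda_2t^{(2)}$ does not even reach $\sqrt{8\alpha\beta}$. $\EV{\|T\|_2^2}$ contains the term $2\lambda_1\lambda_2\EV{\langle t^{(1)},t^{(2)}\rangle}\ge0$. When $a_{\ell+1}-a_\ell$ is small, the right edge of $t^{(1)}$ and the left edge of $t^{(2)}$ overlap on an interval of rescaled length about $\upsilon'$. This cross term is then comparable to $\alpha'$ and makes the constant worse. You cannot offset it with the matching cross terms in $\langle f\ast\tilde w,T\rangle$ without knowing how many elements lie near $a_\ell$.

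The paper avoids any cross term by splitting $f$ instead of the table. With $f=f_1+f_2$ and $w\ge0$, one has $\|f\ast\tilde w\|_2^2\ge\|f_1\ast\tilde w\|_2^2+\|f_2\ast\tilde w\|_2^2$; this is where strongness is used. CSB is then applied to each piece against its own table, giving (in rescaled units) $\ell^2/(\sigma_1+\alpha')+(k-\ell)^2/(\sigma_2+\alpha')\le\gamma k^{3/2}+\beta'k$.

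You also need to treat the case where $a_\ell$ or $a_k-a_{\ell+1}$ is below $2\upsilon'k^{3/2}$, where Lemma~\ref{lem:big L} does not apply. The paper pads $\sigma_i$ by $m_i$. It then shows $m_1=m_2=0$ once $\min\{\ell,k-\ell\}\ge k^{7/8}$, using Theorem~\ref{thm:motivation} on $\cA\cap[0,a_\ell]$.
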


Our final application is to bound the largest gap in a dense $\gamma$-Golomb ruler.

\begin{restatable}[The Largest Gap in a Dense $\gamma$-Golomb ruler]{theorem}{thmlargestgap}
\label{thm:Largest Gap}
  Let $\cA=\{a_1<\dots<a_k\}$ be a $\gamma$-Golomb ruler, and let $(\upsilon,\alpha,\beta)$ be a strong WT-triple. Set $b$ so that $\diam(\cA) \le (k^2-bk^{3/2})/\gamma$. If $k\ge 4\left(1+\upsilon\sqrt{\frac{\beta}{2(\alpha+\upsilon)}}\right)^2$, then
  \[ \max_{1\le i <k} (a_{i+1}-a_i) \le \left(\sqrt{8\beta  (\alpha +\upsilon)}-b\right)\frac{k^{3/2}}{\gamma} +2.\]
\end{restatable}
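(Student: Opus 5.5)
The plan is to rerun the argument behind Theorem~\ref{thm:motivation}, but to exploit the fact that a gap makes part of the ``window space'' useless. Let $G=a_{i+1}-a_i$ be the largest gap and $D=\diam(\cA)$, and translate so that $a_1=0$. Recall the mechanism we expect behind Theorem~\ref{thm:motivation}. Let $(\sigma;w,t)$ witness the triple and fix a scale $s$. Set $w_s(x)=s^{-1}w(x/s)$ and $t_s(x)=t(x/s)$, and consider the smoothed counting function $F=\1_{\cA}\ast w_s$, which is supported on $[0,D+\upsilon s]$ and has total mass $k$.

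The two sides of the comparison are as follows. The lower side applies Cauchy--Schwarz to $k=\int F$ over the support of $F$, giving
\[k^2\le |\supp F|\cdot\norm{F}_2^2 .\]
The upper side expands $\norm{F}_2^2$ as a sum over differences $a-a'$ against $w_s\ast\tilde w_s$. The diagonal part contributes $k\norm{w_s}_2^2\approx k\beta/s$. The off-diagonal part is controlled, via conditions \ref{cond: w positive} and \ref{cond: t*w >1} and the $\gamma$-Golomb property, by the table: it is roughly $\gamma\,(\sigma+\alpha)s$ divided by the length scale. The expectation over the random witnesses is taken throughout.

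The key modification is that, since $w\ge 0$ for a strong triple, $F$ vanishes identically on the open stretch $(a_i+\upsilon s,\,a_{i+1})$. Therefore $\supp F$ has measure at most $D-G+2\upsilon s$ rather than $D+\upsilon s$. So the Cauchy--Schwarz step becomes
\[k^2\le (D-G+2\upsilon s)\,\norm{F}_2^2 ,\]
with a loss of one additional $\upsilon s$ compared with the diameter theorem. I expect this extra loss to be what turns $\alpha$ into $\alpha+\upsilon$. Nonnegativity of $w$ and $t$ is exactly what lets us discard the gap region without sign problems, both in the support estimate and in bounding the off-diagonal terms of $\norm{F}_2^2$ by the table.

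Next, bound $\norm{F}_2^2$ exactly as in the proof of Theorem~\ref{thm:motivation}. The result should be an inequality of the shape
\[\gamma(D-G)\ \ge\ k^2-\Big(\frac{\beta}{s}+\frac{2(\alpha+\upsilon)s}{k}\Big)\cdot k\cdot(\text{lower order}),\]
where the factor $2$ comes from the fact that the gap costs a full window width on each side. Optimizing over $s$ gives $s\asymp\sqrt{k\beta/(2(\alpha+\upsilon))}$, and then the $k^{3/2}$ coefficient becomes $2\sqrt{2\beta(\alpha+\upsilon)}=\sqrt{8\beta(\alpha+\upsilon)}$. The hypothesis $k\ge 4\big(1+\upsilon\sqrt{\beta/(2(\alpha+\upsilon))}\big)^2$ should be exactly what guarantees that this optimal $s$ is admissible, i.e.\ that the window fits inside the range where the table estimate holds, mirroring the condition $k\ge 4(1+\upsilon\sqrt{\beta/\alpha})^2$ in Theorem~\ref{thm:motivation}.

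Combining with $\gamma D\le k^2-bk^{3/2}$ then gives
\[\gamma G\le \big(\sqrt{8\beta(\alpha+\upsilon)}-b\big)k^{3/2}+O(1).\]
The $+2$ in the statement should absorb rounding: choosing $s$ with integrality constraints, and the endpoint effects of the closed gap versus the open interval where $F=0$.

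The main obstacle is the bookkeeping in the off-diagonal estimate. The table condition \ref{cond: t*w >1} is only guaranteed on $(0,\sigma)$, so the scaled differences must be controlled on the new, gap-punctured support. We must also check that the gap does not invalidate the $\gamma$-counting of differences in the window range; the difference counting is intrinsic to $\cA$, so I expect it does not. What I would try first is to treat $\cA$ as two blocks $\cA_1=\{a_1,\dots,a_i\}$ and $\cA_2=\{a_{i+1},\dots,a_k\}$. I would apply the support bound to each block separately, with lengths $L_1$ and $L_2$ satisfying $L_1+L_2=D-G$ and each block incurring its own $\upsilon s$ boundary loss. The cross terms between $F_1$ and $F_2$ are then handled with the same table bound as in Theorem~\ref{thm:motivation}.
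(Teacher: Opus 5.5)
The decisive step is never derived. You say the result ``should be'' an inequality containing the term $2(\alpha+\upsilon)s/k$, and the mechanism you give for it is wrong, because nothing in your comparison involves the table:
the lower side is Cauchy--Schwarz against $\1_{\supp F}$; and on the upper side, the off-diagonal part of $\norm{F}_2^2=\sum_{a,a'}w_s\ast\tilde w_s(a-a')$ is controlled by the $\gamma$-Golomb property and conditions~\ref{cond: w normalized} and~\ref{cond: w positive} alone. It is at most $\gamma\int\EV{w_s\ast\tilde w_s}=\gamma$, cleanly so once $\1_{\cA}$ is fattened to $f=\sum_{a\in\cA}\1_{[a,a+1)}$ as in the paper.

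So $\alpha$ cannot come out of your argument. Losing a window width on each side of the gap explains $2\upsilon s$, not $2(\alpha+\upsilon)s$. The remark that this loss ``turns $\alpha$ into $\alpha+\upsilon$'' is fitted to the target.

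The scale is also wrong as written. The choice $s\asymp\sqrt{k\beta/(2(\alpha+\upsilon))}$ gives a window of length $O(\sqrt k)$, so the diagonal term $k\beta/s$ is of order $\sqrt k$ and the inequality yields only $D-G\gtrsim k^{3/2}$. The window must have length of order $k^{3/2}/\gamma$. Your explanation of the hypothesis on $k$, and the fallback of handling cross terms between $F_1$ and $F_2$ ``with the table bound'', are likewise guesses.

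The paper does split at the gap, but it proceeds differently:
it discards the cross terms, since $w\ge 0$ gives $\norm{f\ast\tilde w}_2^2\ge\norm{f_1\ast\tilde w}_2^2+\norm{f_2\ast\tilde w}_2^2$; it applies the table Cauchy--Schwarz to each block, with tables stretched by Lemma~\ref{lem:big L} to lengths $\sigma_j\ge 2\upsilon'$; and it merges the blocks via $(x+y)^2\le(x^2/D_1+y^2/D_2)(D_1+D_2)$.
In that argument $\alpha'$ is paid once per block. The extra $2\upsilon'$ is the worst case of the padding $m_1+m_2$ that $\sigma_j\ge 2\upsilon'$ forces when a block is short. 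The hypothesis on $k$ rules out $m_1+m_2>2\upsilon'$ via Theorem~\ref{thm:ET}.

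That said, your skeleton does go through once you drop the table and finish the computation, and it then proves more than the statement. Take $F=f\ast w_s$. Each atom has $\int F=k$, and $\supp F\subseteq[0,a_i+1+\upsilon s]\cup[a_{i+1},a_k+1+\upsilon s]$, a set of measure at most $D-G+2+2\upsilon s$. This is a support fact; no sign condition is needed.
The computation of the first claim in the proof of Theorem~\ref{thm:motivation}, done at scale $s$, gives $\EV{\norm{F}_2^2}\le\gamma+(k-\gamma)\beta/s$. Applying Cauchy--Schwarz atom by atom and averaging gives
\[k^2\le\bigl(D-G+2+2\upsilon s\bigr)\Bigl(\gamma+\frac{(k-\gamma)\beta}{s}\Bigr).\]
Hence for $k\ge\gamma$ we get $G\le D+2+2\upsilon s-k^2/\gamma+k^3\beta/(\gamma^2 s)$. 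Inserting $\gamma D\le k^2-bk^{3/2}$ and choosing $\upsilon s=\frac{k^{3/2}}{\gamma}\sqrt{\upsilon\beta/2}$ yields
\[G\le\bigl(\sqrt{8\upsilon\beta}-b\bigr)\frac{k^{3/2}}{\gamma}+2.\]
This implies the theorem because $\upsilon\beta<(\alpha+\upsilon)\beta$. It uses no table and no hypothesis of the form $k\ge 4(1+\cdots)^2$. For a flat window ($\upsilon\beta=1$) it reads $(2\sqrt2-b)k^{3/2}/\gamma+2$.

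This is just the proof of Theorem~\ref{thm:ET} with the table replaced by the indicator of the gapped support. The paper's route loses to it precisely through the padding. The table pays off only when both blocks are long ($m_1=m_2=0$), where it gives $\sqrt{8\alpha\beta}$.
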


Corollaries~\ref{cor:ud} and~\ref{cor:largest gap} follow immediately from these theorems by plugging in the two WT-triples $(10,0.944814,0.941259)$ and $(1,1,1)$, respectively.

\subsection{The Caicedo--Martos--Trujillo Bound}
We begin by proving a fully explicit bound motivated by the seminal work of Erd\H{o}s--Tur\'an~\cite{1941.Erdos&Turan}. This preliminary result will be used when proving the stronger claim of Theorem~\ref{thm:motivation}. While our exposition here appears more complicated than necessary, it foreshadows our more powerful technique. Neither the result nor its proof are essentially different from that given in~\cite{2015.Caicedo&Martos&Trujillo}.

\begin{theorem}[Caicedo--Martos--Trujillo~\cite{2015.Caicedo&Martos&Trujillo}]
\label{thm:ET}
If $\cA$ is a $\gamma$-Golomb ruler with $k> \gamma$ elements, then
\[\diam(\cA) \ge \frac1\gamma\left(k^2-2k^{3/2}+k\right)+\sqrt{k}-2 > \frac{k^2-2k^{3/2}}{\gamma}.\]
\end{theorem}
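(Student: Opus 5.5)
The plan is to run the classical Erd\H{o}s--Tur\'an window-counting argument, tracking constants exactly. In the paper's language this amounts to using the WT-triple $(1,1,1)$: a deterministic window $w=\1_{[0,1]}$ (rescaled to length $u$) and a flat table.

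\textbf{Counting pairs in windows.} Translate so that $\cA\subseteq[0,D]$ with $D=\diam(\cA)$, and fix a positive integer window length $u$ to be chosen later. For each integer $x\in[-u+1,D]$ let $c_x=|\cA\cap[x,x+u)|$. There are $D+u$ such windows, and each element of $\cA$ lies in exactly $u$ of them, so $\sum_x c_x=ku$. Next, $\sum_x c_x(c_x-1)$ counts ordered pairs $(a,b)$ of distinct elements together with a window containing both. A pair with difference $d$, $1\le|d|\le u-1$, lies in exactly $u-|d|$ windows. Since each nonzero difference has at most $\gamma$ representations,
\[\sum_x c_x(c_x-1)\le 2\gamma\sum_{d=1}^{u-1}(u-d)=\gamma u(u-1).\]

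\textbf{Cauchy--Schwarz.} By CSB, $\sum_x c_x^2\ge (ku)^2/(D+u)$, so
\[\frac{k^2u^2}{D+u}\le ku+\gamma u(u-1),\]
which rearranges to
\[D\ \ge\ \frac{k^2u}{k+\gamma(u-1)}-u=\frac{k^2}{\gamma}\cdot\frac{1}{1+(k-\gamma)/(\gamma u)}-u.\]
The hypothesis $k>\gamma$ is what makes $k-\gamma>0$, so the estimates below go in the right direction.

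\textbf{Choosing $u$.} Balancing the loss $(k^2/\gamma)\cdot (k-\gamma)/(\gamma u)$ against the additive $-u$ suggests $u\approx k^{3/2}/\gamma$. I would take $u=\lceil k^{3/2}/\gamma\rceil$, so that $k^{3/2}/\gamma\le u< k^{3/2}/\gamma+1$. With this choice $(k-\gamma)/(\gamma u)\le (k-\gamma)k^{-3/2}$, and the bound becomes
\[D\ge \frac{k^2}{\gamma}\cdot\frac{1}{1+(k-\gamma)k^{-3/2}}-\frac{k^{3/2}}{\gamma}-1.\]
This is exactly the shape of \eqref{eq:explicit} with $\alpha\beta=1$. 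Now expand $\frac{1}{1+\epsilon}\ge 1-\epsilon+\epsilon^2-\epsilon^3$ with $\epsilon=(k-\gamma)k^{-3/2}$, and collect the terms $k^2$, $k^{3/2}$, $k$, $\sqrt{k}$, and the constant, in the manner of \eqref{eq:puiseux}. This should yield $\frac1\gamma(k^2-2k^{3/2}+k)+\sqrt{k}-2$, where the $+\gamma k^{1/2}/\gamma=\sqrt k$ term comes from the $-\gamma$ inside $\epsilon$, and the $-2$ absorbs the ceiling loss and lower-order terms.

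\textbf{Main obstacle.} The delicate step is this final bookkeeping: verifying that the truncated expansion, together with the integer rounding of $u$, really gives the stated lower-order terms $+\sqrt k-2$ for every $k>\gamma$, not just asymptotically. I would handle it by writing the exact expression
\[\frac{k^2}{1+\epsilon}=k^2-k^2\epsilon+\frac{k^2\epsilon^2}{1+\epsilon},\]
and bounding the last term from below using $\epsilon\le k^{-1/2}\le 1$. If that proves too lossy, I would fall back on a direct check for small $k$.

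The final strict inequality $>\frac{k^2-2k^{3/2}}{\gamma}$ is then immediate. Indeed $k/\gamma+\sqrt k-2>0$: if $k\ge 4$ then $\sqrt k\ge 2$; if $k\le 3$ then $\gamma<k$ forces $k/\gamma>1$ and $\sqrt k>1$ (as $k\ge 2$), so the sum exceeds $2$.
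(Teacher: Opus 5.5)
Your counting framework is sound. It is the discrete twin of the paper's argument, which uses the window $\1_{[0,\upsilon)}$ and table $\1_{[-\upsilon,a_k+1)}$ on a fattened indicator so that $\upsilon$ need not be an integer. The inequality $D\ge \frac{k^2u}{\gamma u+k-\gamma}-u$ is correct.

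The gap is the choice $u=\lceil k^{3/2}/\gamma\rceil$. The final bookkeeping is not merely delicate; the claimed outcome is false.
\begin{itemize}
\item The bound you reach is exactly \eqref{eq:explicit} with $\alpha\beta=1$. The paper's own expansion \eqref{eq:puiseux} shows its $\sqrt{k}$-coefficient is $(\gamma-1)/\gamma$, not $1$.
\item You dropped the term $-k^2\epsilon^3/\gamma\approx-\sqrt{k}/\gamma$, and also $k^2\epsilon^4/\gamma\approx 1/\gamma$.
\item Concretely, your bound is $\frac1\gamma(k^2-2k^{3/2}+k)+\frac{\gamma-1}{\gamma}\sqrt{k}+\frac{1-3\gamma}{\gamma}+O(k^{-1/2})$. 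This falls short of the target by about $\frac{\sqrt{k}-1}{\gamma}+1$, which is unbounded in $k$.
\item For $\gamma=1$ and $k=100$ your inequality gives $D\ge 10^7/1099-1000\approx 8099.2$, while the statement requires $D\ge 8108$.
\end{itemize}
No sharper bounding of $\frac{k^2\epsilon^2}{1+\epsilon}$ and no small-$k$ check can rescue this. Your window is too long by about $k/\gamma$. Near that point $g(u)=\frac{k^2u}{\gamma u+k-\gamma}-u$ has curvature about $-2\gamma k^{-3/2}$, so the overshoot costs about $\sqrt{k}/\gamma$.

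The fix is to balance correctly. We have $g'(u)=0$ exactly when $\gamma u+k-\gamma=k\sqrt{k-\gamma}$, so the optimum is $u\approx (k^{3/2}-k)/\gamma$, which is the paper's choice of $\upsilon$. Set $\upsilon=(k^{3/2}-k)/\gamma$, which is positive since $k\ge 2$, and take $u=\lceil \upsilon\rceil$. The map $u\mapsto \frac{k^2u}{\gamma u+k-\gamma}$ is increasing because $k>\gamma$, and $u<\upsilon+1$. Hence
\[D>\frac{k^2\upsilon}{k^{3/2}-\gamma}-\upsilon-1=\frac1\gamma\left(k^2-2k^{3/2}+k\right)+\sqrt{k}-2+\frac{\gamma(\sqrt{k}-1)}{k^{3/2}-\gamma},\]
which is precisely the paper's final expression. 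Your discrete $-u$, in place of the paper's $-\upsilon-1$, exactly absorbs the rounding loss. With this change the rest of your plan, including the final strict inequality, goes through.
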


\begin{proof}
If $k\le 3$, the result is trivial.
Let $\cA=\{0=a_1<a_2<\dots < a_k\}$ be a $\gamma$-Golomb ruler with $k > \gamma$ and $k\ge 4$. We define the fattened indicator function of $\cA$ as
  \[F(x) \coloneqq \sum_{a \in \cA} \1_{[a,a+1)}(x),\]
and note that $F\ast \tilde F$ is piecewise linear, with vertices at the integers, and $F\ast \tilde F(n)=d(n)$, where $d(n)$ is the number of ways to write $n$ as a difference of two elements of $\cA$.  Thus, as $\cA$ is a $\gamma$-Golomb ruler, we have
  \[F\ast \tilde F(x) \le (k-\gamma) \Lambda(x) + \gamma,\]
where 
  \[\Lambda(x)\coloneqq \1_{[0,1)} \ast \tilde \1_{[0,1)}(x)=\max\{0,1-|x|\}\] 
is the tent function.

Set $\upsilon \coloneqq (k^{3/2}-k)/\gamma$, which is larger than $1$ since $k\ge 4$ and $k>\gamma$, and let $W \coloneqq \1_{[0,\upsilon )}$, which we call ``the window''. We note that $W\ast \tilde W(x)=\upsilon  \Lambda(x/\upsilon )$, which is supported on $(-\upsilon ,\upsilon )$.
Our control of $F\ast \tilde F$ and the Adjoint Formula allows us to control $F\ast \tilde W$ as follows:
\begin{align*}
  \norm{F \ast \tilde W}_2^2
  & = \langle F \ast \tilde W, F \ast \tilde W\rangle \\
  &= \langle F \ast \tilde F, W \ast \tilde W \rangle\\
  &\le \int_{-\upsilon }^{\upsilon } \left((k-\gamma) \Lambda(x) + \gamma\right) \left(\upsilon  \Lambda(x/\upsilon )\right)\,dx \\
  &\le (k-\gamma)\upsilon  + \gamma \upsilon ^2.
\end{align*}

Set $T \coloneqq \1_{[-\upsilon ,a_k+1)}$, which we call ``the table''. We observe that
  \[  \norm{T}_{2}^2 =a_k +1+\upsilon \]
and that $T \ast W(x) = \upsilon $ for $0\le x \le a_k+1$. As $\supp(F)\subseteq[0,a_k+1]$, we find that
  \begin{equation*}
    \langle F \ast \tilde W, T \rangle
    = \langle F,T\ast W\rangle 
    = \upsilon  \int_{0}^{a_k+1} F(x)\,dx = \upsilon  k.
  \end{equation*}

Using CSB, we now have
\begin{align*}
  \upsilon ^2 k^2 
  &= \langle F \ast \tilde W, T\rangle^2 \\
  &\le \norm{F \ast \tilde W}_2^2 \cdot \norm{T}_2^2 \\
  &\le \left((k-\gamma)\upsilon  + \gamma \upsilon ^2\right)\cdot \left(a_k+1+\upsilon \right),
\end{align*}
and we arrive at
\begin{align*}
    \diam(\cA) = a_k &\ge \frac{\upsilon  k^2}{\gamma \upsilon  +k-\gamma}-1-\upsilon \\
    &=\frac{k^2-2k^{3/2}+k}{\gamma }+\sqrt{k}-2+\frac{\gamma  \left(\sqrt{k}-1\right)}{k^{3/2}-\gamma }.
\end{align*}
As $k^{3/2}\ge k > \gamma$, 
    \[\frac{\gamma  \left(\sqrt{k}-1\right)}{k^{3/2}-\gamma }>0,\]
and so the first inequality of Theorem~\ref{thm:ET} is proved. The second inequality is algebraic, following from $\gamma\ge 1$ and $k\ge 4$.
\end{proof}

The introduction of $W,T$ in the above proof seems unnecessarily complicated, and is done for the sake of foreshadowing our stronger result in the sequel. We have used very little about the window $W$ and table $T$, and one of the main innovations in this work is that we can conduct an efficient search through pairs of functions with the needed properties, which are enumerated in Definition~\ref{def:WT-triple} above.

\subsection{Making WT-triples easier to use}

We also prove two lemmas that make WT-triples and their witnesses easier to use, which will be applied at various points in the paper.
\begin{lemma}\label{lem:big L}
If $(\upsilon,\alpha,\beta)$ is a WT-triple with witness $(\sigma;w,t)$, and $\sigma'\ge 2\upsilon$, then $(\sigma';w,j)$ is also a witness to $(\upsilon,\alpha,\beta)$, where
  \[
    j(x)\coloneqq \begin{cases} 
        t(x) , & -\upsilon \le x \le 0; \\
        1, & 0\le x \le \sigma'-\upsilon; \\
        t(x-\sigma'+\sigma) , & \sigma'-\upsilon\le x\le \sigma';\\
        0 & x\not\in [-\upsilon,\sigma'] .
    \end{cases}
  \] 
\end{lemma}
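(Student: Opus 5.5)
We verify the six conditions of Definition~\ref{def:WT-triple} for $(\sigma';w,j)$ directly. The window $w$ is unchanged, so conditions~\ref{cond: w normalized}, \ref{cond: def beta} and~\ref{cond: w positive} carry over verbatim. Note that $j$ is a random function whose atoms $j_i$ are built from the atoms $t_i$ by the displayed formula, with the same probabilities $p_i$, so the coupling between $w$ and $j$ is inherited from that of $w$ and $t$.

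First, $j$ is well defined. At $x=0$ both cases give $t(0)$ or $1$; if these differ, the value is on a null set and irrelevant for integrals. At $x=\sigma'-\upsilon$ the third case gives $t(\sigma-\upsilon)$, while the second gives $1$, again a measure-zero issue. Clearly $j$ is supported on $[-\upsilon,\sigma']$ and equals $1$ on $(0,\sigma'-\upsilon)$, giving~\ref{cond: t flat}. For~\ref{cond: def alpha}, split $\norm{j}_2^2$ into three pieces:
\[
\int_{-\upsilon}^0 t^2+(\sigma'-\upsilon)+\int_{\sigma-\upsilon}^{\sigma}t^2 .
\]
Since $t=1$ on $(0,\sigma-\upsilon)$, this equals $\norm{t}_2^2-(\sigma-\upsilon)+(\sigma'-\upsilon)=\norm{t}_2^2+\sigma'-\sigma$. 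Taking expectations gives $\EV{\norm{j}_2^2}\le\sigma'+\alpha$.

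The main step is~\ref{cond: t*w >1}: we need $\EV{w\ast j(x)}\ge1$ for $x\in(0,\sigma')$. Since $w$ is supported on $[0,\upsilon]$, we have $w\ast j(x)=\int_0^\upsilon w(y)j(x-y)\,dy$, which depends only on $j$ restricted to $[x-\upsilon,x]$. We consider three cases, using $\sigma,\sigma'\ge2\upsilon$.

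\emph{Case $x\le\upsilon$.} Here $[x-\upsilon,x]\subseteq[-\upsilon,\upsilon]\subseteq[-\upsilon,\sigma'-\upsilon]$, and on this interval $j$ agrees with $t$ almost everywhere, since $t=1$ on $(0,\sigma-\upsilon)\supseteq(0,\upsilon)$. Hence $w\ast j(x)=w\ast t(x)$ atom by atom, and the bound follows from the hypothesis.

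\emph{Case $x\ge\sigma'-\upsilon$.} Here $[x-\upsilon,x]\subseteq[\sigma'-2\upsilon,\sigma']$, and on this interval $j(s)=t(s-\sigma'+\sigma)$ almost everywhere, because $t=1$ on $(\sigma-2\upsilon,\sigma-\upsilon)\subseteq(0,\sigma-\upsilon)$. Therefore $w\ast j(x)=w\ast t(x-\sigma'+\sigma)$ with $x-\sigma'+\sigma\in(0,\sigma)$, and again the hypothesis applies.

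\emph{Case $\upsilon<x<\sigma'-\upsilon$.} Now $j=1$ on $[x-\upsilon,x]$, so $w\ast j(x)=\int w=1$ by~\ref{cond: w normalized}.

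Expectations are taken atom-wise throughout. The only delicate points are the inclusions $(0,\upsilon)\subseteq(0,\sigma-\upsilon)$ and $(\sigma-2\upsilon,\sigma-\upsilon)\subseteq(0,\sigma-\upsilon)$, which are exactly where $\sigma\ge2\upsilon$ is used. Together with $\sigma'\ge2\upsilon$, they ensure the three cases cover $(0,\sigma')$.
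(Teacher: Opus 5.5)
Your proof is correct and matches the paper's argument step for step: the same $L^2$ accounting $\EV{\norm{j}_2^2}=\EV{\norm{t}_2^2}-\sigma+\sigma'$ gives Condition~\ref{cond: def alpha}, and the same three-case split of $(0,\sigma')$ gives Condition~\ref{cond: t*w >1}, with $\sigma,\sigma'\ge 2\upsilon$ ensuring that $j$ agrees almost everywhere with $t$ (or its translate) on each relevant window. The only difference is that you write out the right-end case $x\ge\sigma'-\upsilon$ explicitly via $w\ast j(x)=w\ast t(x-\sigma'+\sigma)$, whereas the paper simply appeals to symmetry.
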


Loosely speaking, we stretch the middle part of the table $t$, where it is identically $1$, to be whichever size we want, while keeping the edges of $t$. A consequence of this lemma, which is crucial for the proofs of Theorem~\ref{thm:ud} and Theorem~\ref{thm:Largest Gap}, is that we can take witnesses with the same window $w$ and different $\sigma$'s.

\begin{proof}
Let $(\sigma;w,t)$ be a witness to $(\upsilon,\alpha,\beta)$. Since $\sigma'\ge 2\upsilon>0$, the function $j$ is properly defined. We claim that $(\sigma';w,j)$ witnesses $(\upsilon,\alpha,\beta)$. We need to verify that $j$ has the correct support, and satisfies Conditions~\ref{cond: t flat},~\ref{cond: def alpha}, and~\ref{cond: t*w >1} of Definition~\ref{def:WT-triple}.

First, note that $j$ is supported on $[-\upsilon,\sigma']$, as required by Definition~\ref{def:WT-triple}. Condition~\ref{cond: t flat}, that $j(x)=1$ for $x \in [0,\sigma'-\upsilon]$, is immediate from the definition of $j$. Condition~\ref{cond: def alpha}, that $\EV{\norm{j}_2^2} \le \sigma'+\alpha$, is straightforward: as \( t(x)=1 \) for $x\in [0,\sigma-\upsilon]$ and $j(x)=1$ for $x\in [0,\sigma'-\upsilon]$, we have 
  \[\EV{\norm{j}_2^2} = \EV{\norm{t}_2^2} - \sigma+\sigma' \le \sigma'+\alpha.\]

Now, consider Condition~\ref{cond: t*w >1}, which requires that $\EV{w\ast j(x)}\ge 1$ for $x\in (0,\sigma')$. Since $\sigma\ge 2\upsilon$, the functions $j$ and $t$ are identical for inputs in $[-\upsilon,\upsilon]$. Thus, if $0\le x \le \upsilon$ and $0\le y \le \upsilon$, then $x-y \in [-\upsilon,\upsilon]$, and so $j(x-y)=t(x-y)$. Then, as $w$ is supported on $[0,\upsilon]$, we have
  \begin{multline*}
    w\ast j(x) = \int_{\RR}w(y)j(x-y)\,dy = \int_{0}^{\upsilon}w(y) j(x-y)\,dy \\
  =\int_0^\upsilon w(y) t(x-y)\,dy = w\ast t (x).
  \end{multline*}
Likewise, for $\upsilon<x<\sigma'-\upsilon$ and $0\le y\le \upsilon$, we have $j(x)=1$, and so
  \[w\ast j(x) = \int_0^\upsilon w(y) j(x-y)\,dy  
  =\int_0^\upsilon w(y) \cdot 1\,dy = 1,\]
using condition~\ref{cond: w normalized} of Definition~\ref{def:WT-triple}. The $x\ge \sigma'-\upsilon \le x < \sigma'$ case is symmetric with the $0< x\le \upsilon$ case handled above.
\end{proof}

\begin{lemma}\label{lem:tau slide}
    If $(\upsilon,\alpha,\beta)$ is a WT-triple and $\tau>0$, then $(\upsilon/\tau,\alpha/\tau,\beta\tau)$ is also a WT-triple.
\end{lemma}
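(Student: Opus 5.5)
We prove Lemma~\ref{lem:tau slide} by rescaling a witness. Let $(\sigma;w,t)$ witness $(\upsilon,\alpha,\beta)$, and fix $\tau>0$. The new witness will be $(\sigma/\tau;w',t')$, where each atom is transformed by
\[ w_i'(x)\coloneqq \tau\, w_i(\tau x),\qquad t_i'(x)\coloneqq t_i(\tau x), \]
and the atom probabilities $p_i$ are kept unchanged. The window is scaled to preserve its mass, and the table is scaled to preserve its height. We then check each item of Definition~\ref{def:WT-triple} in turn.

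\emph{Supports and the requirement on $\sigma$.} The function $w'$ is supported on $[0,\upsilon/\tau]$ and $t'$ on $[-\upsilon/\tau,\sigma/\tau]$. Moreover $\sigma/\tau\ge 2\upsilon/\tau$ follows from $\sigma\ge 2\upsilon$.

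\emph{Condition~\ref{cond: t flat}.} We have $t'(x)=1$ whenever $\tau x\in(0,\sigma-\upsilon)$, that is, for $x\in(0,\sigma/\tau-\upsilon/\tau)$.

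\emph{Condition~\ref{cond: w normalized}.} Substituting $u=\tau x$ gives $\int w_i'=\int w_i=1$.

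\emph{Conditions~\ref{cond: def alpha} and~\ref{cond: def beta}.} By the same change of variables,
\[ \norm{t_i'}_2^2=\tfrac1\tau\norm{t_i}_2^2 \qquad\text{and}\qquad \norm{w_i'}_2^2=\tau\norm{w_i}_2^2. \]
Taking expectations,
\[ \EV{\norm{t'}_2^2}\le (\sigma+\alpha)/\tau=\sigma/\tau+\alpha/\tau \qquad\text{and}\qquad \EV{\norm{w'}_2^2}\le\beta\tau. \]
These are exactly the bounds required for the triple $(\upsilon/\tau,\alpha/\tau,\beta\tau)$.

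\emph{Conditions~\ref{cond: w positive} and~\ref{cond: t*w >1}.} The key identities are
\[ w'\ast\tilde w'(x)=\tau\,(w\ast\tilde w)(\tau x) \qquad\text{and}\qquad w'\ast t'(x)=(w\ast t)(\tau x). \]
Both follow from substituting $y=u/\tau$ in the convolution integral. For the first,
\[ \int \tau w(\tau y)\,\tau w(\tau(y-x))\,dy=\tau\int w(u)\,w(u-\tau x)\,du. \]
For the second, the factor $\tau$ carried by $w'$ cancels the Jacobian $1/\tau$. Taking expectations atom by atom, $\EV{w'\ast\tilde w'(x)}=\tau\,\EV{w\ast\tilde w(\tau x)}\ge0$ for all $x$. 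Likewise $\EV{w'\ast t'(x)}=\EV{w\ast t(\tau x)}\ge1$ whenever $\tau x\in(0,\sigma)$, which is the same as $x\in(0,\sigma/\tau)$.

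None of these steps is a real obstacle; the only place needing care is tracking the powers of $\tau$ in the two $L^2$ norms and in the convolutions. As a side remark, the rescaling preserves nonnegativity of $w$ and $t$, so strong WT-triples map to strong WT-triples.
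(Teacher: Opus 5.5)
Your proposal is correct and follows the same approach as the paper: the paper's proof uses exactly the witness $(\sigma/\tau;\,\tau w(\tau x),\,t(\tau x))$. The paper only states this witness, and your write-up supplies the routine verification of each condition, with the powers of $\tau$ tracked correctly.
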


\begin{proof}
Let $(\sigma;w,t)$ be a witness to $(\upsilon,\alpha,\beta)$.
Let $g(x)=\tau w(\tau x)$ and $h(x)=t(\tau x)$. Then $(\sigma/\tau; g,h)$ witnesses $(\upsilon/\tau, \alpha/\tau,\beta\tau)$.
\end{proof}

We note that Lemma~\ref{lem:tau slide} leaves the quantities $\sqrt{\alpha\beta}$ and $\upsilon \sqrt{ \alpha/\beta}$ unchanged. Both of these invariants play an important role below.

\section{Proof of Theorem~\ref{thm:motivation}}\label{sec:BIGproof}

In this section, we restate and prove our main result, Theorem~\ref{thm:motivation}.

\BigBound*

\begin{proof}%[Proof of Theorem~\ref{thm:motivation}]
Let $(\upsilon,\alpha,\beta)$ be a WT-triple. By Lemma~\ref{lem:tau slide} with $\tau=\gamma\sqrt{\alpha/\beta}$,
    \[(\upsilon',\alpha',\beta') \coloneqq \left(\frac{\upsilon}{\tau},\frac{\alpha}{\tau},\beta\tau \right) = \left(\frac \upsilon{\gamma\sqrt{\alpha/\beta}}, \frac{\sqrt{\alpha\beta}}{\gamma}, \gamma \sqrt{\alpha\beta}\right)\]
is also a WT-triple. This value of $\tau$ has been chosen to optimize
  \[\gamma \alpha'+\frac{\beta'}{\gamma} = 2\sqrt{\alpha\beta}.\]

Let $\cA\subseteq[0,\diam(\cA)]\cap \ZZ$ be a $\gamma$-Golomb ruler with $k>\gamma$ elements. By Theorem~\ref{thm:ET}, we know that $\diam(\cA) \ge (k^2-2k^{3/2})/\gamma$. We assume also that 
  \[ k\ge (2\gamma \upsilon'+2)^2 = 4 \left(1+\upsilon \sqrt{\beta/\alpha}\right)^2,\]
and set
    \[\sigma' \coloneqq  \frac{\diam(\cA)+1}{k^{3/2}} \ge \frac{k^{1/2}-2}{\gamma} \ge 2\upsilon'.\]
As $(\upsilon',\alpha',\beta')$ is a WT-triple, by Lemma~\ref{lem:big L} we can take a witness $(\sigma';w,t)$.

We consider the fat indicator function of $\cA$ (scaled to match $(w,t)$)
    \[f(x) \coloneqq k^{3/2}\,\sum_{n\in\cA} \1_{[n,n+1)}(xk^{3/2})
     = k^{3/2} \sum_{n\in\cA} \1_{[n/k^{3/2},(n+1)/k^{3/2})}(x).\]
Note that $w,t$ are random functions, while $f$ is deterministic. We first observe that $f$ is supported on 
  \[ \supp(f) \subseteq \left[0,\frac{\diam(\cA)+1}{k^{3/2}} \right)=[0,\sigma').\]
We note that $\norm{f}_1 = |\cA| = k$, and that $f\ast \tilde f$ is piecewise linear with vertices at the points $(n/k^{3/2},d(n)\cdot k^{3/2})$, where $d(n)$ is the number of ways to write $n$ as a difference of two elements of $\cA$. Because $\cA$ is a $\gamma$-Golomb ruler, we have
    \begin{equation}\label{eq:Sidon property}
        f\ast\tilde f(x) \le \gamma k^{3/2} + (k-\gamma) k^{3/2}\Lambda(xk^{3/2}),
    \end{equation}
where $\Lambda(x)$ is the ``tent at $0$'' given by $\Lambda(x) = \max\{0,1-|x|\}$.

By CSB, we have
    \[ \langle f \ast \tilde w,t\rangle \le \norm{f\ast \tilde w}_2 \cdot \norm{t}_2.\]
Taking expected values and using CSB again, we arrive at
    \[\EV{\langle f \ast \tilde w,t\rangle}
    \le \EV{\norm{f\ast \tilde w}_2 \cdot \norm{t}_2} \le \EV{\norm{f\ast \tilde w}_2^2}^{1/2} \cdot \EV{\norm{t}_2^2}^{1/2},\]
which we will use in the form
    \begin{equation}\label{eq:CSB result}
        \EV{\langle f \ast \tilde w,t\rangle}^2 \le 
        \EV{\norm{f \ast \tilde w}_2^2} \cdot \EV{\norm{t}_2^2}.
    \end{equation}
We will bound each of these three terms, and then return to this inequality.

\begin{claim}\label{claim: ||f*g||_2}
    \(\displaystyle \EV{\norm{f \ast \tilde w}_2^2} \le \gamma k^{3/2}+(k-\gamma) \beta'. \)
\end{claim}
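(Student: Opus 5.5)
We follow the pattern of the Erd\H{o}s--Tur\'an computation in the proof of Theorem~\ref{thm:ET}, applied atom by atom to the random window $w$. Fix an atom $w_i$ of $w$. By the Adjoint Formula,
\[
  \norm{f\ast\tilde w_i}_2^2=\langle f\ast \tilde w_i, f\ast\tilde w_i\rangle=\langle f\ast\tilde f,\, w_i\ast\tilde w_i\rangle .
\]
Taking expectations over the atoms and using linearity, we get
\[
  \EV{\norm{f\ast\tilde w}_2^2}=\langle f\ast\tilde f,\,\EV{w\ast\tilde w}\rangle .
\]
We now insert the pointwise bound \eqref{eq:Sidon property} on $f\ast\tilde f$. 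This is legitimate only against a nonnegative weight. Here Condition~\ref{cond: w positive} of Definition~\ref{def:WT-triple} enters: $\EV{w\ast\tilde w}\ge 0$ everywhere (the individual $w_i$ need not be nonnegative). It gives
\[
  \EV{\norm{f\ast\tilde w}_2^2}\le \gamma k^{3/2}\int_{\RR}\EV{w\ast\tilde w}(x)\,dx+(k-\gamma)\int_{\RR}k^{3/2}\Lambda(xk^{3/2})\,\EV{w\ast\tilde w}(x)\,dx .
\]

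The first integral is handled by the Fubini identity $\int g\ast\tilde g=(\int g)^2$ together with Condition~\ref{cond: w normalized}. Each atom of the witness has $\int w_i=1$, so $\int \EV{w\ast\tilde w}=\sum_i p_i\cdot 1=1$, and the first term equals exactly $\gamma k^{3/2}$.

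For the second term, note that $k^{3/2}\Lambda(xk^{3/2})$ is nonnegative with total integral $1$. We again use $\EV{w\ast\tilde w}\ge0$, together with the pointwise majorization $w_i\ast\tilde w_i(x)\le w_i\ast\tilde w_i(0)=\norm{w_i}_2^2$ proved at the end of Subsection~\ref{subsec:setup}. Averaging the majorization gives
\[
  0\le \EV{w\ast\tilde w}(x)\le \EV{\norm{w}_2^2}\le\beta'
\]
by Condition~\ref{cond: def beta} for the triple $(\upsilon',\alpha',\beta')$. Hence the second integral is at most $\beta'$, and the claim follows.

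The main point needing care is the order of operations. We must take the expectation before applying the Sidon bound \eqref{eq:Sidon property}, since only $\EV{w\ast\tilde w}$, not each $w_i\ast\tilde w_i$, is known to be nonnegative. The majorization by the value at $0$ must hold for every atom, which it does by CSB, so it survives averaging. Beyond that the argument is routine bookkeeping.
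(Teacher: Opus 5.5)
Your argument is correct and is essentially the paper's proof. You pass to $\int \EV{w\ast\tilde w(x)}\,(f\ast\tilde f)(x)\,dx$ via the Adjoint Formula, use Condition~\ref{cond: w positive} to insert \eqref{eq:Sidon property}, evaluate the first integral as $1$ atom by atom using $\int w=1$, and bound the second via $\EV{w\ast\tilde w(x)}\le\EV{\norm{w}_2^2}\le\beta'$. The only detail you leave implicit is that this last step also needs $k>\gamma$, so that the coefficient $k-\gamma$ is nonnegative; the paper notes this explicitly.
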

Having a good bound on $f\ast \tilde f$ allows us to get a good bound on $f\ast \tilde w$ using the Adjoint Formula. In particular,
    \begin{align*}
        \EV{\norm{f \ast \tilde w}_2^2}
    &= \EV{ \langle f \ast \tilde w, f \ast \tilde w \rangle } \\
    &= \EV{ \langle w \ast \tilde w, f \ast \tilde f \rangle }\\
    &= \EV{ \int_{\RR} (w \ast \tilde w(x)) (f \ast \tilde f(x)) \,dx } \\
    &= \int_{\RR} \EV{w \ast \tilde w(x)} (f \ast \tilde f(x))\,dx.
    \end{align*}
Using Condition~\ref{cond: w positive} of being a witness to a WT-triple, we can replace $f\ast\tilde f$ with its upper bound from~\eqref{eq:Sidon property}.
    \begin{multline}
        \int_{\RR} \EV{w \ast \tilde w(x)} (f \ast \tilde f(x))\,dx
        \le \int_{\RR} \EV{w \ast \tilde w(x)} \left( \gamma k^{3/2} + (k-\gamma)k^{3/2} \Lambda(xk^{3/2}) \right)\,dx \\
        = \gamma k^{3/2}\int_{\RR} \EV{w \ast \tilde w(x)}\,dx +(k-\gamma)k^{3/2} \int_{\RR} \EV{w \ast \tilde w(x)} \Lambda(xk^{3/2})\,dx\label{eq:temp1}
    \end{multline}
Now, using Condition~\ref{cond: w normalized}, 
    \[\int_{\RR} \EV{w \ast \tilde w(x)}\,dx = \EV{\int_{\RR} w \ast \tilde w(x) \,dx} = \EV{ \left(\int_{\RR} w\right)^2} = \EV{1^2}=1,\]
which is where we need $\int w =1$ and not merely $\EV{\int w}=1$.
Since $\Lambda(xk^{3/2})=0$ for $x\not\in (-k^{-3/2},k^{-3/2})$, we can bound $\int_{\RR} \EV{w \ast \tilde w(x)} \Lambda(xk^{3/2}) \,dx$ quite sharply. By Condition~\ref{cond: def beta},
    \[\EV{w \ast \tilde w(x)} \le \EV{w \ast \tilde w(0)} = \EV{ \int_{\RR} w(t)^2\,dt} \le \beta'.\]
Thus,
    \[    \int_{\RR} \EV{w \ast \tilde w(x)} \Lambda(xk^{3/2})\,dx
    \le \beta' \int_{\RR} \Lambda(xk^{3/2}) \,dx = \beta' k^{-3/2}.\]
Line~\eqref{eq:temp1} now (using that $k>\gamma$) gives the claimed expression.

\begin{claim}
    \(\displaystyle \EV{ \norm{t}_2^2 } \le (\diam(\cA)+1)\cdot k^{-3/2} +\alpha'\).
\end{claim}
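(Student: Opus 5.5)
This is a direct consequence of how $\sigma'$ was chosen together with Condition~\ref{cond: def alpha} of Definition~\ref{def:WT-triple}. Recall the setup. The triple $(\upsilon',\alpha',\beta')$ is a WT-triple by Lemma~\ref{lem:tau slide}. We set $\sigma' = (\diam(\cA)+1)k^{-3/2}$ and checked that $\sigma'\ge 2\upsilon'$. Lemma~\ref{lem:big L} then supplies a witness $(\sigma';w,t)$ for $(\upsilon',\alpha',\beta')$ with exactly this $\sigma'$. The plan is simply to read off Condition~\ref{cond: def alpha} for this witness.

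Condition~\ref{cond: def alpha} for the witness $(\sigma';w,t)$ states that $\EV{\norm{t}_2^2}\le \sigma'+\alpha'$. Substituting the definition of $\sigma'$ gives
\[
\EV{\norm{t}_2^2}\le \frac{\diam(\cA)+1}{k^{3/2}}+\alpha',
\]
which is the claimed bound.

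The only point needing care is the legitimacy of using Lemma~\ref{lem:big L} with this particular $\sigma'$. That requires $\sigma'\ge 2\upsilon'$, which was already verified. It follows from Theorem~\ref{thm:ET}, which gives $\diam(\cA)+1\ge (k^2-2k^{3/2})/\gamma$, combined with the hypothesis $k\ge (2\gamma\upsilon'+2)^2$.

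In the stretching construction of Lemma~\ref{lem:big L}, the squared norm changes by exactly $\sigma'-\sigma$, so the bound with $\alpha'$ is preserved. There is therefore no real obstacle in this claim. The substantive work lies in the next step, the lower bound on $\EV{\langle f\ast\tilde w,t\rangle}$ via Condition~\ref{cond: t*w >1}.
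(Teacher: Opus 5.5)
Your proposal is correct and matches the paper's proof: the claim follows by reading off Condition~\ref{cond: def alpha} for the witness $(\sigma';w,t)$ supplied by Lemma~\ref{lem:big L} and substituting $\sigma'=(\diam(\cA)+1)k^{-3/2}$. Your side remarks are also accurate. These are that $\sigma'\ge 2\upsilon'$ follows from Theorem~\ref{thm:ET} together with $k\ge(2\gamma\upsilon'+2)^2$, and that stretching the table in Lemma~\ref{lem:big L} changes $\EV{\norm{t}_2^2}$ by exactly $\sigma'-\sigma$.
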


From Condition~\ref{cond: def alpha} and our definition of $\sigma'$, we have
  \[\EV{ \norm{t}_2^2 } \le \sigma'+\alpha' = \frac{\diam(\cA)+1}{k^{3/2}} +\alpha',\]
as claimed.

\begin{claim}
    \(\displaystyle \EV{ \langle f \ast \tilde w, t \rangle} \ge k\)
\end{claim}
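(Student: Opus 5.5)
We move the window onto $t$ with the Adjoint Formula and then use Condition~\ref{cond: t*w >1} on the support of $f$. For each atom, $\langle f\ast\tilde w, t\rangle = \langle f, t\ast w\rangle$. This holds because
\[
\int\!\!\int f(y)w(y-x)t(x)\,dy\,dx=\int f(y)\int w(z)t(y-z)\,dz\,dy .
\]
Here $\tilde w(x-y)=w(y-x)$, and we substituted $z=y-x$. Since $f$ is deterministic and everything is a finite weighted sum over the atoms of $(w,t)$, linearity of expectation gives
\[
\EV{\langle f\ast\tilde w,t\rangle}=\langle f,\EV{w\ast t}\rangle=\int_{\RR} f(x)\,\EV{w\ast t(x)}\,dx .
\]

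Next we use the support of $f$. We showed $\supp(f)\subseteq[0,\sigma')$. The witness $(\sigma';w,t)$ supplied by Lemma~\ref{lem:big L} satisfies Condition~\ref{cond: t*w >1} with $\sigma'$ in place of $\sigma$, that is, $\EV{w\ast t(x)}\ge 1$ for $x\in(0,\sigma')$. The point $x=0$ and any other finite set are null sets and do not affect the integral. Also $f\ge 0$, being a nonnegative combination of indicators. Therefore
\[
\int_{\RR} f(x)\,\EV{w\ast t(x)}\,dx \ge \int_0^{\sigma'} f(x)\,dx = \norm{f}_1 = k,
\]
using the normalization $\norm{f}_1=|\cA|=k$ observed earlier. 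This proves the claim.

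The only delicate point is the bookkeeping of the stretched witness. We must make sure that the $\sigma'$ in Condition~\ref{cond: t*w >1} is exactly $(\diam(\cA)+1)k^{-3/2}$, so that the lower bound $\EV{w\ast t}\ge 1$ covers all of $\supp(f)$. This is why $\sigma'$ was chosen that way, and why $\sigma'\ge 2\upsilon'$ was needed to invoke Lemma~\ref{lem:big L}. The nonnegativity of $f$ is essential here, since $w$ and $t$ themselves need not be nonnegative. Beyond this, the argument is a direct application of the adjoint identity.
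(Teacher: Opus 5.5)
Your proof is correct and follows the paper's argument exactly. The Adjoint Formula moves $w$ onto $t$, linearity of expectation gives $\langle f,\EV{w\ast t}\rangle$, and then $\EV{w\ast t}\ge 1$ on $(0,\sigma')\supseteq\supp(f)$, together with $f\ge 0$, yields $\int f = k$ (the paper leaves $f\ge 0$ implicit; you state it).
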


The Adjoint Formula gives $\langle f\ast \tilde w,t\rangle = \langle f, t \ast w \rangle$, and $f$ is supported $[0,\sigma')$, where $\EV{ t\ast w (x)}\ge 1$. Thus we have (noting that $\EV{t\ast w}$ is a function)
\begin{equation*}
    \EV{ \langle f \ast \tilde w, t \rangle}
    = \EV{ \langle f , t \ast w\rangle} 
    = \langle f , \EV{t \ast w}\rangle 
    \ge \int f  = k,
\end{equation*}
as claimed.

Now, returning to~\eqref{eq:CSB result}, we have
    \[ k^2 \le \bigg(\gamma k^{3/2}+(k-\gamma)\beta'\bigg)\bigg(\frac{\diam(\cA)+1}{k^{3/2}}+\alpha'\bigg)\]
hence
\begin{align}
    \diam(\cA) &\ge \frac{k^{7/2}}{\gamma k^{3/2}+(k-\gamma)\beta'}-\alpha'k^{3/2}-1 \notag \\
    &= \frac{k^2}{\gamma}\left(\frac{1}{1+(k-\gamma)k^{-3/2}\beta'\gamma^{-1}}-\frac{\alpha'\gamma}{k^{1/2}}\right)-1 \notag\\
    &= \frac{k^2}{\gamma}\left(\frac{1}{1+(k-\gamma)k^{-3/2}\sqrt{\alpha\beta}}-\frac{\sqrt{\alpha\beta}}{k^{1/2}}\right)-1,\label{eq:explicit2}
\end{align}
using the fact $k>\gamma$ so $\gamma k^{3/2}+(k-\gamma)\beta'>0$. This proves~\eqref{eq:explicit}. 

Middle school algebra tells us that the expression on Line~\eqref{eq:explicit2}
is equal to
  \[\frac 1\gamma\left(k^2 - 2c k^{3/2} + c^2 k + c(\gamma-c^2) \sqrt{k} + c^2(c^2-2\gamma)-\gamma +\frac{c^2 E(\gamma ,c,k)}{c (k-\gamma )+k^{3/2}} \right)\]
where $c\coloneqq \sqrt{\alpha\beta}$ and
  \[E(\gamma,c,k) \coloneqq c \gamma  \left(c^2-2 \gamma \right)+\gamma \left(\gamma-c^2\right) \sqrt{k}+c \left(3\gamma-c^2\right) k.\]
For $0<c\le 1$ and $1\le \gamma < k$, we have $E(\gamma,c,k)\ge 0$. This proves~\eqref{eq:puiseux}.

Let
\[ h(x)\coloneq \frac{x^2}{\gamma}\left(\frac{1}{1+(x-\gamma)x^{-3/2}\sqrt{\alpha\beta}}-\frac{\sqrt{\alpha\beta}}{x^{1/2}}\right). \]
Verifying~\eqref{eq:inverse puiseux} is slightly more involved, as $h^{-1}(y)$ does not seem to admit a simple closed form (note that $h^{-1}(y)$ is well-defined for sufficiently large $y$). From the series expansion of $h$, it is clear that $h^{-1}(y)\sim \sqrt{\gamma y}$, so we may write
\[ h^{-1}(y) = \sqrt{\gamma y}(1+z_1) \]
with $z_1=z_1(y)\to 0$ as $y\to\infty$. Substitute this expression for $h^{-1}(y)$ for $x$ in the definition of $h$ and expand as a series in $z_1$ to obtain
\begin{equation}\label{eq:y expansion}
\begin{split}
    y &= h(h^{-1}(y))\\ &= \left(\frac{(\alpha\beta)^2}{\gamma}-\frac{(\alpha\beta)^{\frac32}y^{\frac14}}{\gamma^{\frac34}}+\frac{\alpha\beta y^{\frac12}}{\gamma^{\frac12}}-\frac{2(\alpha\beta)^{\frac12}y^{\frac34}}{\gamma^{\frac14}}-2\alpha\beta+y+(\alpha\beta)^{\frac12}\gamma^{\frac14}y^{\frac14}\right) \\
    &\quad+\left(-\frac{(\alpha\beta)^{\frac32}y^{\frac14}}{2\gamma^{\frac34}}+\frac{\alpha\beta y^{\frac12}}{\gamma^{\frac12}}-\frac{3(\alpha\beta)^{\frac12}y^{\frac34}}{\gamma^{\frac14}}+2y+\frac{1}{2}(\alpha\beta)^{\frac12}\gamma^{\frac14}y^{\frac14}\right)z_1+O(z_1^2).
\end{split}
\end{equation}
Using the fact $z_1(y)\to 0$, if we drop the $O(z_1^2)$ term, solve for $z_1$, and expand as a series in $y$, we obtain
\[ z_1(y)\sim \frac{(\alpha\beta)^{1/2}}{(\gamma y)^{1/4}}, \]
so
\[ h^{-1}(y)=\sqrt{\gamma y} + \sqrt{\alpha\beta}(\gamma y)^{1/4}(1+z_2) \]
with $z_2=z_2(y)\to 0$ as $y\to\infty$. Repeating a similar process gives the estimate $z_2(y)\sim\frac{(\alpha\beta)^{1/2}}{2(\gamma y)^{1/4}}$, so
\[ h^{-1}(y)=\sqrt{\gamma y} + \sqrt{\alpha\beta}(\gamma y)^{1/4}+\frac{\alpha\beta}{2}(1+z_3) \]
with $z_3\to0$. We find $z_3(y)\sim\frac{5\alpha\beta-4\gamma}{4(\alpha\beta)^{1/2}(\gamma y)^{1/4}}$ similarly, which completes the justification for~\eqref{eq:inverse puiseux}.
\end{proof}

\section{On the distribution of \texorpdfstring{$\gamma$}{gamma}-Golomb rulers}
\label{sec:applications}

In the section, we consider two applications of the method of proof of Theorem~\ref{thm:motivation}. We have proven in Corollary~\ref{cor:Golomb} that if $k$ is sufficiently large, then every $\gamma$-Golomb ruler $\cA$ with $k$ elements satisfies
    \(\diam(\cA) \ge \gamma^{-1}(k^2-1.88607 k^{3/2}).\)
The theorems in this section apply to $\gamma$-Golomb rulers that are not much wider; we assume \(\diam(\cA) \le \gamma^{-1}(k^2-b k^{3/2}),\) where $b\in\RR$ can be chosen. Of course, if $b>1.88607$, then the hypothesis is vacuous.

We first give Theorem~\ref{thm:ud}, quantifying the uniform distribution of nearly optimal $\gamma$-Golomb rulers. Then, we give Theorem~\ref{thm:Largest Gap}, bounding the largest gap in a nearly optimal $\gamma$-Golomb ruler. Unlike Theorem~\ref{thm:motivation}, these results require a \emph{strong} WT-triple.

\thmud*

\begin{proof}
By Lemma~\ref{lem:tau slide}, for each $\tau>0$, the triple $(\upsilon',\alpha',\beta') = (\upsilon/\tau,\alpha/\tau,\beta\tau)$ is a WT-triple; we will choose $\tau=\gamma\sqrt{2\alpha/\beta}$ at the end to optimize a constant.
Set
  \begin{align*}
    m_1 &\coloneqq \max\left\{0, 2\upsilon'-\frac{a_\ell+1}{k^{3/2}}\right\},
    &\sigma_1 
    &\coloneqq \frac{a_\ell+1}{k^{3/2}} +m_1 \ge 2\upsilon', \\
    m_2 &\coloneqq \max\left\{0, 2\upsilon'-\frac{a_k+1-a_{\ell+1}}{k^{3/2}} \right\},
    &\sigma_2 
    &\coloneqq \frac{a_k+1-a_{\ell+1}}{k^{3/2}}+m_2 \ge 2\upsilon'.
  \end{align*}
Let $(\sigma_1;w,t_1)$ and $(\sigma_2;w,t_2)$ be witnesses to $(\upsilon',\alpha',\beta')$ with the same nonnegative window and the prescribed $\sigma_1,\sigma_2$.

We define $f=f_1+f_2$, where
\begin{align*}
    f_1(x) 
    &\coloneqq k^{3/2} \sum_{\substack{n\in \cA \\ n\le a_\ell}} \1_{[n,n+1)}(k^{3/2} x), &
    f_2(x) 
    &\coloneqq k^{3/2} \sum_{\substack{n\in \cA \\ n \ge a_{\ell+1}}} \1_{[n,n+1)}(k^{3/2} x).
\end{align*}
Note that $f_1$ is supported on
  \( [0,(a_\ell+1) k^{-3/2}),\) 
which has length at most $\sigma_1$, and $\norm{f_1}_{1} = \ell$. Similarly, $f_2$ is supported on 
  \([(a_{\ell+1} k^{-3/2},(a_k+1)k^{-3/2}),\)
which has length at most $\sigma_2$, and $\norm{f_2}_{1} = k-\ell$.

As in Claim~\ref{claim: ||f*g||_2} in the proof of Theorem~\ref{thm:motivation}, we have
    \begin{equation}\label{eq:claim2.5 here} 
    \EV{\norm{f \ast \tilde w}_2^2} \le \gamma k^{3/2}+(k-\gamma) \beta' \le \gamma k^{3/2}+\beta' k.
    \end{equation}
To get access to distributional information, we write a lower bound on $\EV{\norm{f \ast \tilde w}_2^2}$, which requires that $\tilde w\ge 0$ (instead of merely $w\ast\tilde w\ge 0$). For any nonnegative functions $g,h$, one has $\norm{g+h}_2^2 \ge \norm{g}_2^2+\norm{h}_2^2$. As $(f_1+f_2)\ast \tilde w=f_1\ast\tilde w+f_2\ast\tilde w$, and $\tilde w\ge 0$ so that $f_1\ast\tilde w,f_2\ast\tilde w$ are both nonnegative,
we have
  \begin{align*} 
    \EV{\norm{f\ast \tilde w}_2^2}
    &= \EV{ \norm{(f_1+f_2)\ast \tilde w}_2^2} \\
    &\ge \EV{ \norm{f_1 \ast \tilde w}_2^2 + \norm{f_2 \ast \tilde w}_2^2}\\
    &= \EV{ \norm{f_1 \ast \tilde w}_2^2} + \EV{\norm{f_2 \ast \tilde w}_2^2}.
  \end{align*}
Now, CSB gives
  \[ \langle f_1 \ast \tilde w, t_1 \rangle \le \norm{f_1 \ast \tilde w}_2 \cdot \norm{t_1}_2.\]
Take expected values, square both sides, and use CSB again to find
  \begin{align*}
    \EV{\langle f_1 \ast \tilde w, t_1 \rangle}^2
     &\le \EV{\norm{f_1 \ast \tilde w}_2 \cdot \norm{t_1}_2}^2 \\
     &\le \EV{\norm{f_1 \ast \tilde w}_2^2} \cdot \EV{\norm{t_1}_2^2} \\
     &\le \EV{\norm{f_1 \ast \tilde w}_2^2} \cdot (\sigma_1+\alpha').
  \end{align*}
Here we use the condition $t_1\ge 0$ from $(\upsilon,\alpha,\beta)$ being a strong WT-triple. Using the Adjoint Formula and Condition~\ref{cond: t*w >1} of Definition~\ref{def:WT-triple}, we have
    \[\EV{\langle f_1 \ast \tilde w,t_1\rangle} = \EV{\langle f_1,t_1\ast w\rangle} \ge \ell,\]
and so
  \[\EV{ \norm{ f_1 \ast \tilde w }_2^2} 
  \ge \frac{\ell^2}{\sigma_1+\alpha'} 
  = \frac{\ell^2 k^{3/2}}{a_\ell+1 +(m_1 +\alpha')k^{3/2}}.\]
The same argument works for $f_2$ (though with $t_2$ shifted to match the support of $f_2$), and we thus get
\begin{equation}\label{eq:lower bound}
    \EV{\norm{f\ast \tilde w }_2^2}
    \ge \frac{\ell^2 k^{3/2}}{a_\ell+1 +(m_1 +\alpha')k^{3/2}}
      + \frac{(k-\ell)^2 k^{3/2}}{a_k+1 -a_{\ell+1}+(m_2 +\alpha')k^{3/2}}.
\end{equation}

Comparing the upper bound in \eqref{eq:claim2.5 here} and the lower bound in \eqref{eq:lower bound} gives
  \begin{equation}\label{eq:ud midpoint}
  \frac{\ell^2 k^{1/2}}{a_\ell+1 +(m_1 +\alpha')k^{3/2}}
    + \frac{(k-\ell)^2 k^{1/2}}{a_k+1 -a_{\ell+1}+(m_2 +\alpha')k^{3/2}}
    \le \gamma k^{1/2}+ \beta'.
  \end{equation}
Since $a_{\ell+1}\ge a_\ell+1$, we can replace $1-a_{\ell+1}$ with $-a_\ell$, and both denominators are positive, so we can clear them. We can write inequality~\eqref{eq:ud midpoint} in the form
  \begin{equation} \label{eq:quadratic form}
    A\Big(a_\ell-\frac{\ell}{k}a_k\Big)^2 + \frac{B}{\gamma}\Big(a_\ell-\frac{\ell}{k}a_k\Big)+\frac{C}{\gamma^2} \le 0,
  \end{equation}
with
  \[A = \gamma k^{1/2}+ \beta'  .\]
As $A$ is obviously positive, inequality~\eqref{eq:quadratic form} implies that
  \begin{equation}\label{eq:ud bound 3/4}
  \abs*{ a_\ell - \frac{\ell}{k}a_k }
  \le \frac{1}{\gamma}\left(\frac{|B|}{2A} + \sqrt{\left(\frac{|B|}{2A}\right)^2 -\frac{C}{A}} \,\right).
  \end{equation}

Without loss of generality, we may assume that $\ell \le (k+1)/2$, and we set $d\coloneq\ell/k \le \delta$. Define $c$ so that $a_k=(k^2-ck^{3/2})/\gamma$; we have $1.88607\ge c\ge b$ and
  \begin{align*}
    B &= \left((2 d-1) (\beta' -c \gamma )+\gamma ^2 \left(m_1-m_2\right)\right)k^2\\
        &\qquad+\left(\gamma  (m_1-m_2)-c(2d-1)\right) \beta' k^{3/2} 
        +\gamma ^2 k^{1/2}
        +\beta'  \gamma.
  \end{align*}
As $m_1,m_2,d,\beta',c,\gamma$ are all constants or bounded between constants as $k\to\infty$, we see immediately that $B=O(k^2)$ and so
  \[\frac{|B|}{2A} = \frac{O(k^2)}{2(\gamma k^{1/2}+\beta')}= O(k^{3/2}).\]

It remains to consider $-C/A$. We have
  \begin{align*}
    -C &= \left( d(1-d) \left(\beta' +\gamma  (2 \alpha'  \gamma -c)+\gamma ^2 \left(m_1+m_2\right)\right) \right) k^{4}\\
        &\quad 
        +\bigg( \alpha'  \gamma  \left(\alpha'  \gamma ^2+\beta' -\gamma c \right) 
        + d(1-d)c\left(\gamma  c  - 2 \beta' \right) +\gamma  m_2 \left(\alpha'  \gamma ^2+d (\beta' -c \gamma )\right) \\
        &\qquad\qquad 
          + \gamma  m_1 \left((1-d)(\beta'-c\gamma)+\alpha'  \gamma^2\right) +\gamma^3 m_1 m_2 \bigg) k^{7/2} \\
        &\quad
        + \beta'  \left(\alpha'  \gamma - c d + \gamma  m_1\right) \left(\alpha'  \gamma -c (1-d)+\gamma  m_2\right) k^{3}\\
        &\quad 
        + \gamma ^2 d(1-d) k^{5/2}\\
        &\quad 
        +\gamma  \left(\beta' +\gamma  (\alpha'  \gamma - c (1-d))-\beta'  d+\gamma ^2 m_2\right)  k^{2} \\
        &\quad 
        +\beta'  \gamma  \left(\alpha'  \gamma  - c (1-d)+\gamma  m_2\right) k^{3/2}.
  \end{align*}
The bound on Line~\eqref{eq:ud bound 3/4} is dominated by $\sqrt{-C/A}$, and in particular by the $k^4$ term of $-C$ and the $k^{1/2}$ term of $A$, giving a growth of order $\sqrt{k^4/k^{1/2}}=k^{7/4}$. Hence
\[\left| a_\ell - \frac{\ell}{k}a_k\right| 
  \le \sqrt{\frac{d(1-d) \left(\beta' +\gamma  (2 \alpha'  \gamma -c)+\gamma ^2 \left(m_1+m_2\right)\right)}{\gamma}} \cdot \frac{k^{7/4}}{\gamma} + O(k^{3/2}).\]

By the monotonicity of this bound in $c$, we can replace $c$ with $b$. We have
  \[\frac{\beta'}{\gamma}+2\alpha'\gamma = \frac{\beta\tau}{\gamma}+2\gamma\alpha/\tau,\]
and we choose $\tau>0$ to force $\beta\tau/\gamma = 2\gamma\alpha/\tau$, i.e., we set $\tau \coloneq \gamma \sqrt{2\alpha/\beta}$. With this, we have
  \[\frac{\beta'}{\gamma}+2\alpha'\gamma = \sqrt{8\alpha\beta},\]
so
\begin{equation}\label{eq:before cases}
\left| a_\ell - \frac{\ell}{k}a_k\right| 
  \le \sqrt{\frac{\ell(k-\ell)}{k^2}\left(\sqrt{8\alpha\beta}-b+\gamma(m_1+m_2)\right)} \cdot \frac{k^{7/4}}{\gamma} + O(k^{3/2}).
\end{equation}

We now split into two cases: first, when one of $\ell$ or $k-\ell$ is at most $k^{7/8}$; and second, when both are at least $k^{7/8}$. In the first case, $\frac{\ell(k-\ell)}{k^2} = o(1)$ and the right-hand side of~\eqref{eq:before cases} is $o(k^{7/4})$, and so is stronger than the claimed result~\eqref{eq:asymp uniform new}.

In the second case, from the definitions of $m_1,m_2$, if
\begin{equation}\label{eq:finish}
2\upsilon'k^{3/2} -1 \le a_\ell <a_{\ell+1} \le a_k+1-2\upsilon'k^{3/2}, \end{equation}
then $m_1=m_2=0$ and the result~\eqref{eq:asymp uniform new} holds. Since $\ell\ge k^{7/8}$, we have $a_\ell \geq \frac 1\gamma (\ell^2-1.89\ell^{3/2}) = \Omega(k^{7/4})$ by Theorem~\ref{thm:motivation} applied to the Golomb ruler $\cA_k\cap [0,a_\ell]$, and similarly since $k-\ell\ge k^{7/8}$ we have $a_k-a_{\ell+1}=\Omega(k^{7/4})$. Therefore~\eqref{eq:finish} holds in this case for sufficiently large $k$.

Thus~\eqref{eq:asymp uniform new} holds in all cases, and Theorem~\ref{thm:ud} is proved.
\end{proof}

If we choose $\ell$ in the previous proof so that the largest gap occurs between $a_\ell$ and $a_{\ell+1}$, then we can solve the resulting inequality for the size of this gap. Surprisingly, while $a_\ell$ can be up to $O(k^{7/4})$ away from its expected location, the size of the gap can be only $O(k^{3/2})$.

\thmlargestgap*

\begin{proof}[Proof of Theorem~\ref{thm:Largest Gap}]
Let
  \[g \coloneqq \max_{1\le \ell<k} (a_{\ell+1}-a_\ell),\]
and set $\ell$ so that $g=a_{\ell+1}-a_\ell$.

We proceed as in the proof of Theorem~\ref{thm:ud}, but use $a_{\ell+1}=a_\ell+g$ instead of $a_{\ell+1}\ge a_\ell+1$. 
We start with line~\eqref{eq:ud midpoint}, which guarantees
  \begin{equation}\label{eq:ud midpoint repeate}
  \frac{\ell^2 k^{1/2}}{a_\ell+1 +(m_1 +\alpha')k^{3/2}}
    + \frac{(k-\ell)^2 k^{1/2}}{a_k+1 -a_{\ell+1}+(m_2 +\alpha')k^{3/2}}
    \le \gamma k^{1/2}+ \beta'.
  \end{equation}

Set $D_1,D_2,x,y$ to be the positive quantities
  \begin{align*}
    D_1
    &\coloneqq a_\ell+1+(m_1+\alpha')k^{3/2},
    &x &\coloneqq \ell k^{1/4}, \\    
    D_2
    &\coloneqq a_k+1-(a_\ell+g)+(m_2+\alpha')k^{3/2},
    &y &\coloneqq (k-\ell)k^{1/4}.
  \end{align*}
From Cauchy's Inequality
  \[ (x+y)^2
  =\left(\frac{x}{\sqrt D_1}\cdot\sqrt D_1+\frac{y}{\sqrt D_2}\cdot\sqrt D_2\right)^{2}
  \le \left(\frac{x^2}{D_1}+\frac{y^2}{D_2}\right)(D_1+D_2)
  \]
with
  \[x+y = k^{5/4},\qquad D_1+D_2 = a_k+2-g+(m_1+m_2+2\alpha')k^{3/2} \]
we have
  \[
  \frac{k^{5/2}}{a_k+2-g+(m_1+m_2+2\alpha')k^{3/2}}
  \le \frac{x^2}{D_1}+\frac{y^2}{D_2} \le \gamma k^{1/2}+\beta'.
  \]

Solving this linear inequality for $g$ yields
  \begin{align*}
    g 
    &\le a_k+2+(m_1+m_2+2\alpha')k^{3/2} - \frac{k^{5/2}}{\gamma k^{1/2}+\beta'} \\
    &\le \left((m_1+m_2)\gamma + 2\alpha'\gamma + \frac{\beta'}{\gamma} -b\right) \frac{k^{3/2}}{\gamma}+2,
  \end{align*}
where we have used the elementary $a_k-k^{5/2}/(\gamma k^{1/2}+\beta') \le (\beta'/\gamma-b)k^{3/2}/\gamma$.

If $m_1+m_2\le 2\upsilon'$, then we have
    \begin{equation}\label{eq:g bound prescale}
    g\le \left(2\upsilon'\gamma + 2\alpha'\gamma + \frac{\beta'}{\gamma} -b\right) \frac{k^{3/2}}{\gamma}+2.
    \end{equation}
We set $\tau=\gamma \sqrt{2 (\alpha+\upsilon)/\beta}$, and this becomes
    \begin{equation}\label{eq:g bound}
    g \le \left(\sqrt{8\beta(\alpha+\upsilon)}-b\right) \frac{k^{3/2}}{\gamma}+2,
    \end{equation}
proving Theorem~\ref{thm:Largest Gap} in the case $m_1+m_2\le 2\upsilon'$.

On the other hand, if $m_1+m_2 > 2\upsilon'$, then $m_1>0$ and $m_2>0$ since $m_i \le 2\upsilon'$. Thus
\begin{align*}
    m_1&=2\upsilon'-\frac{a_\ell+1}{k^{3/2}}>0, & m_2=2\upsilon'-\frac{a_k+1-a_{\ell+1}}{k^{3/2}}>0,
\end{align*}
and 
    \[ m_1+m_2 = 4\upsilon'-\frac{2+a_k+g}{k^{3/2}}>2\upsilon'.\]
From Theorem~\ref{thm:ET} we get an upper bound on $a_k$, and that yields
    \[\frac{k^2-2k^{3/2}}{\gamma}
    < 2+a_k+g < 2\upsilon'k^{3/2}.\]
This implies that 
    \[k< 4(1+\upsilon'\gamma)^2=4\left(1+\upsilon\sqrt{\frac{\beta}{2(\alpha+\upsilon)}}\right)^2,\] 
contradicting the hypothesis.
\end{proof}

Being more careful, one can improve~\eqref{eq:g bound prescale} to
\[ g\le \left(\alpha'\gamma+\frac{\beta'}{\gamma}-b+\frac{2c^2+3c\sqrt{\sqrt{8\alpha'\beta'}-b}}{4}\right)\frac{k^{3/2}}{\gamma}+o(k^{3/2}) \]
where $c$ satisfies
\[ 0=4c^3-\gamma(\alpha'+2\upsilon')\left(2c+\sqrt{\sqrt{8\alpha'\beta'}-b}\right), \]
but this calculation is much more involved, so we omit the details. For comparison, for $\alpha'=\beta'=\upsilon'=\gamma=1$ and $b=2$, this gives $g\le 1.954 k^{3/2}+o(k^{3/2})$, compared to $g\le 2k^{3/2}+o(k^{3/2})$ from Theorem~\ref{thm:Largest Gap}. The main idea is that one can split into cases based on whether or not $m_1$ vanishes, and in each case get a better bound than was obtained in Theorem~\ref{thm:Largest Gap}. The worst case occurs when $m_1>0$ and $\ell=\sqrt{ck}\pm o(\sqrt{k})$ with $c$ defined above.

Cilleruelo~\cite{2010.Cilleruelo} (Corollary 6) claims the even better bound $(2\sqrt{2}-b)k^{3/2}+O(k)$ in the case $\gamma=1$. Cilleruelo's result stems from a more careful treatment of Sidon sets contained in a union of intervals (or more generally disjoint arithmetic progressions). However, his Theorem 2, from which Corollary 6 is derived, is incorrect. For example, considering the trivial arithmetic progressions $P_i=\{a_i\}$ (where $\cA=\{a_1,\dots,a_k\}$ is a Sidon set), the theorem reads $k<k^{1/2}+k^{3/4}+1/2$, which is evidently false for $k\ge 9$. Nevertheless, we think that the conclusion of Theorem 2 is probably still true in the case of two arithmetic progressions, just not with that proof, and similarly the claim in Corollary 6 is probably true. We suspect a variant of our argument could save this factor of $\sqrt{2}$, in both Theorems~\ref{thm:ud} and~\ref{thm:Largest Gap}:

\begin{restatable}{conjecture}{conjud}
With the hypotheses of Theorem~\ref{thm:ud}, we have
\[ \left| a_{\ell} - \frac{\ell}{k}\, a_k\right| < k^{7/4} \frac{\sqrt{\delta(1-\delta)}}{\gamma}\sqrt{2\sqrt{\alpha\beta}-b} +O_{\gamma,b,\delta}(k^{3/2}). \]
With the hypotheses of Theorem~\ref{thm:Largest Gap}, we have
\[ \max_{1\le i <k} (a_{i+1}-a_i) \le \left(2\sqrt{\beta  (\alpha +\upsilon)}-b\right)\frac{k^{3/2}}{\gamma} +O_{\gamma,b}(k). \]
\end{restatable}

\section{Lower bounds on WT-triples}
\label{sec:lowerbounds}

In this section we prove lower bounds on various quantities related to WT-triples. These represent limits to what we can prove with our methods. We start with a lower bound on $\alpha+\beta$, which informs part of our optimization procedure for finding WT-triples.

\begin{theorem}\label{thm:lower bound}
Let $f$ be a nonnegative function. Suppose:
\begin{enumerate}[label=(\roman*)]
    \item\label{lower bound 1} $\angleb{f,\1_{(0,\sigma)}}\ge \mu$.
    \item\label{lower bound 2} $\norm{f\ast \tilde{W}}_2^2\le \nu+\norm{W}_2^2$ for all $W$ supported on $[0,\upsilon]$ with $\int W=1$.
\end{enumerate}
Then $\alpha+\beta\ge 2\mu-\nu-\sigma$ for all WT-triples $(\upsilon, \alpha, \beta)$ with witness $(\sigma;w,t)$.
\end{theorem}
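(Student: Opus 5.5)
The plan is to rerun the Cauchy--Schwarz step from the proof of Theorem~\ref{thm:motivation}, with the extremal test function $f$ in place of the fat indicator of a Golomb ruler. I will replace Cauchy--Schwarz by its additive form (AM--GM), $2\langle g,h\rangle \le \norm{g}_2^2+\norm{h}_2^2$, so that the conclusion is linear in $\alpha$ and $\beta$.

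Fix a WT-triple $(\upsilon,\alpha,\beta)$ with witness $(\sigma;w,t)$, and write $w=w_i$, $t=t_i$ with probability $p_i$. Each atom $w_i$ is supported on $[0,\upsilon]$ and has $\int w_i=1$, by Condition~\ref{cond: w normalized}; this is exactly why that condition is imposed atomwise. Hence hypothesis~\ref{lower bound 2} applies to every atom and gives
\[\norm{f\ast\tilde w_i}_2^2\le \nu+\norm{w_i}_2^2 .\]
Averaging over $i$ and using Condition~\ref{cond: def beta},
\[\EV{\norm{f\ast\tilde w}_2^2}\le \nu+\beta .\]
Condition~\ref{cond: def alpha} gives $\EV{\norm{t}_2^2}\le\sigma+\alpha$. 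Applying AM--GM atom by atom and taking expectations then yields
\[2\,\EV{\langle f\ast\tilde w,t\rangle}\le \nu+\beta+\sigma+\alpha .\]

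It remains to show $\EV{\langle f\ast\tilde w,t\rangle}\ge\mu$. By the Adjoint Formula,
\[\EV{\langle f\ast\tilde w,t\rangle}=\langle f,\EV{t\ast w}\rangle .\]
Condition~\ref{cond: t*w >1} gives $\EV{t\ast w}\ge 1$ on $(0,\sigma)$. Since $f\ge0$, the contribution from $(0,\sigma)$ is at least $\langle f,\1_{(0,\sigma)}\rangle\ge\mu$ by hypothesis~\ref{lower bound 1}. Combining this with the AM--GM bound gives $2\mu\le\nu+\alpha+\beta+\sigma$, which is the claim.

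The main obstacle is the part of $f$ lying outside $(0,\sigma)$, where $\EV{t\ast w}$ is not controlled and could be negative. The first thing I would try is to reduce to $f$ supported in $[0,\sigma]$, which is the situation in the intended applications, where $f$ models a ruler living on the table. Concretely, I would replace $f$ by $f\1_{(0,\sigma)}$. This leaves~\ref{lower bound 1} unchanged, and I would then need~\ref{lower bound 2} to survive truncation. For nonnegative windows this follows from monotonicity: $0\le f\1_{(0,\sigma)}\le f$ gives $0\le (f\1_{(0,\sigma)})\ast\tilde W\le f\ast\tilde W$ pointwise. For signed windows truncation is not obviously harmless. In that case I would either restrict attention to strong triples or read the theorem with $f$ supported in $[0,\sigma]$, checking which version is used later in the paper's optimization section.
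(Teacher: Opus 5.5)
Your core argument is the paper's: hypothesis~\ref{lower bound 2} applied to each atom $w_i$ together with Condition~\ref{cond: def beta}, then Condition~\ref{cond: def alpha}, the Adjoint Formula with Condition~\ref{cond: t*w >1}, and Cauchy--Schwarz. The paper squares and finishes with $xy\le (x+y)^2/4$. Your atomwise AM--GM gives the same estimate in linear form, and it spares you the implicit $\mu\ge 0$ needed to square.

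The obstacle you raise is real, and the paper's proof passes over it. The paper simply writes $\EV{\angleb{f,t\ast w}}\ge\angleb{f,\1_{(0,\sigma)}}$. That requires $\EV{t\ast w}\ge 0$ on the part of $\supp f$ lying in $[-\upsilon,0]\cup[\sigma,\sigma+\upsilon]$ (further out, $t\ast w$ vanishes), and nothing in Definition~\ref{def:WT-triple} provides this. The step is automatic in two cases: for witnesses with $w,t\ge 0$, and whenever $\supp f\subseteq[0,\sigma]$.

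Your truncation correctly extends the result to every witness whose windows are nonnegative, even when the tables are signed. Equivalently, for such witnesses one may replace $t$ by $\max\{t,0\}$, which preserves every condition of the definition and makes $t\ast w\ge 0$.

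The remaining case is signed windows together with mass of $f$ in those boundary strips. Neither you nor the paper settles it, and your monotonicity argument really does break there. Write $f_1=f\1_{(0,\sigma)}$ and $f_2=f-f_1$. Then $\norm{f\ast\tilde W}_2^2-\norm{f_1\ast\tilde W}_2^2$ equals $2\angleb{f_1,f_2\ast W\ast\tilde W}+\norm{f_2\ast\tilde W}_2^2$. The first term is linear in $f_2$ and can be negative once $W\ast\tilde W$ changes sign, so $f_1$ need not satisfy hypothesis~\ref{lower bound 2} with the same $\nu$.

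The fix is the one you suggest: state the theorem either with $\supp f\subseteq[0,\sigma]$ or for witnesses with nonnegative windows. That covers everything the paper actually uses:
\begin{itemize}
\item the deconvolved $f$ of Section~\ref{sec:growing} is truncated to an interval of length about $\sigma$;
\item the optimized witnesses are strong;
\item the later results (the bound $\alpha\beta>1/2$ and Theorem~\ref{thm:Delta lb}) place Dirac masses at or just outside $0$ and $\sigma$, but they do not invoke this theorem. They redo the argument and handle exactly this boundary issue through continuity of $\EV{t\ast w}$.
\end{itemize}
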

\begin{proof}
Let $(\sigma;w,t)$ be a witness for the WT-triple $(\upsilon,\alpha,\beta)$. By~\ref{lower bound 2}, 
\[ \EV{\norm{f\ast \tilde{w}}_2^2}\le \nu+\EV{\norm{w}_2^2} \le \nu+\beta. \]
Therefore by CSB we have
\[ \EV{\angleb{f\ast \tilde{w}, t}}^2 \le \EV{\norm{f\ast \tilde{w}}_2^2}\EV{\norm{t}_2^2}\le (\nu+\beta)(\sigma+\alpha). \]
By the adjoint formula and~\ref{lower bound 1} we have
\[ \EV{\angleb{f\ast \tilde{w}, t}}=\EV{\angleb{f, t\ast w}}\ge \angleb{f, \1_{(0,\sigma)}}\ge \mu. \]
Hence
\[ \mu^2\le (\nu+\beta)(\sigma+\alpha) \le \frac{(\sigma+\nu+\alpha+\beta)^2}{4}, \]
which implies the desired result. Here, we used the inequality 
\[ xy=\frac{(x+y)^2-(x-y)^2}{4}\le \frac{(x+y)^2}{4} \]
with $x=\nu+\beta$, $y=\sigma+\alpha$.
\end{proof}

A natural choice for $f$ witnessing a good lower bound on $\alpha+\beta$ is one that makes the CSB bound~\eqref{eq:CSB result} an equality (or close to an equality), i.e., one naturally guesses that $f$ satisfying $f\ast \tilde{w} \approx t$ witnesses a good lower bound on $\alpha+\beta$ in Theorem~\ref{thm:lower bound}. This turns out to be true, experimentally, at least in the case that $\upsilon$ is sufficiently large. In particular, our computational experiments suggest:
\begin{conjecture}
If $(\upsilon, \alpha, \beta)$ is a WT-triple, then $\alpha+\beta\gtrsim 1.886$.
\end{conjecture}
The meaning of ``$\gtrsim$'' is not made precise; there might be WT-triples with $\alpha+\beta<1.886$, but probably not by much. Since one can apply Lemma~\ref{lem:tau slide} to convert any WT-triple $(\upsilon, \alpha, \beta)$ to a WT-triple $(\upsilon', \alpha', \beta')$ with $\alpha'+\beta'=2\sqrt{\alpha\beta}$, this conjecture is equivalent to:
\begin{restatable}{conjecture}{LowerBound}\label{conj:lowerbound}
If $(\upsilon, \alpha, \beta)$ is a WT-triple, then $2\sqrt{\alpha\beta}\gtrsim 1.886$.
\end{restatable}
Thus, we believe our optimization procedure obtains WT-triples that are very close to the best possible. The $f$ that suggests these conjectures is pictured in Figure~\ref{fig:f}, and it is discussed further in Section~\ref{sec:growing}.

\begin{figure}[htbp!]
    \centering
    \includegraphics[width=0.8\textwidth]{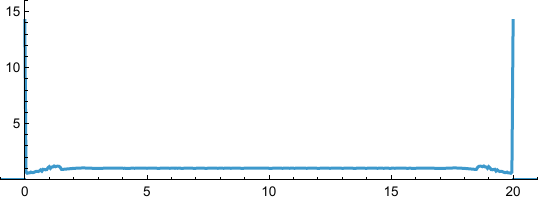}
    \includegraphics[width=0.8\textwidth]{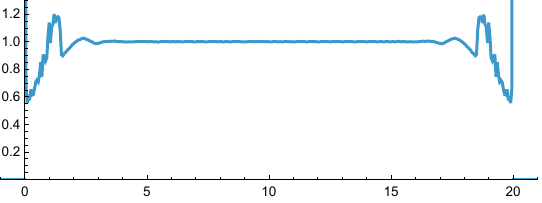}
    \caption{Top: The function $f$ that suggests Conjecture~\ref{conj:lowerbound}. This function satisfies $f\ast \tilde{w}\approx t$ for our best witness $(\sigma;w,t)$. Bottom: Zoomed-in view to show more detail.}
    \label{fig:f}
\end{figure}

Next, we give a lower bound on $\alpha\beta$ itself, which shows that our framework cannot be used to prove $\overline{b_\infty}=0$. Morally speaking, this is just done by plugging a particular $f$ into Theorem~\ref{thm:lower bound} for which we can compute $\mu$ and $\nu$. This $f$ is the indicator of an interval plus a scaled Dirac delta placed at each endpoint. This specific $f$ was inspired by the shape of the experimentally determined $f$ in Figure~\ref{fig:f}.

For this result, we will make the very mild technical assumption that $\EV{t\ast w}$ is continuous. This assumption holds if, for instance, $w$ and $t$ are uniformly bounded, as is the case with our witnesses. Without this assumption, the result nearly still holds, but we must replace the strict inequality with a weak inequality (i.e., $2\sqrt{\alpha\beta}\ge \sqrt{2}$); this may be proved by approximating a general witness $(\sigma; w, t)$ by a sequence of uniformly bounded witnesses. There are a few additional technicalities compared to Theorem~\ref{thm:lower bound}.
\begin{theorem}
If $(\upsilon, \alpha, \beta)$ is a WT-triple with witness $(\sigma; w, t)$ and $\EV{t\ast w}$ is continuous, then $\alpha\beta > 1/2$, so $2\sqrt{\alpha\beta} > \sqrt{2}\approx 1.414$.
\end{theorem}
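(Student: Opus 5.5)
The plan is to run the Cauchy--Schwarz argument from the proof of Theorem~\ref{thm:lower bound}. We take $f$ to be the indicator of $[0,\sigma]$ plus point masses of weight $c>0$ at the two endpoints, with $c$ chosen at the end. Concretely, for a fixed atom $w$ set
\[ F_w \coloneqq \1_{[0,\sigma]}\ast\tilde w + c\,\tilde w + c\,\tilde w(\cdot-\sigma), \]
which is what ``$f\ast\tilde w$'' means for $f=\1_{[0,\sigma]}+c\delta_0+c\delta_\sigma$. Each $F_w$ is a bounded, compactly supported function, so it is an honest $L^2$ function and no approximation argument is needed. Arguing by contradiction, we assume $\alpha\beta\le 1/2$.

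First we bound the pairing with $t$ from below. By the Adjoint Formula,
\[ \langle F_w,t\rangle=\int_0^\sigma t\ast w + c\,t\ast w(0)+c\,t\ast w(\sigma). \]
Taking expectations and using Condition~\ref{cond: t*w >1} gives $\EV{t\ast w}\ge 1$ on $(0,\sigma)$. Continuity of $\EV{t\ast w}$ extends this to the endpoints $0$ and $\sigma$; this is exactly where the continuity hypothesis is used. Hence $\EV{\langle F_w,t\rangle}\ge \sigma+2c$.

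Next we bound $\EV{\norm{F_w}_2^2}=\EV{\langle f\ast\tilde f, w\ast\tilde w\rangle}$ from above. Formally,
\[ f\ast\tilde f=(\sigma-|x|)_+ + 2c\,\1_{[-\sigma,\sigma]} + c^2(2\delta_0+\delta_\sigma+\delta_{-\sigma}). \]
In practice we would expand $\norm{F_w}_2^2$ directly into the corresponding inner products of translates of $w$, which gives the same terms without using measures. Now $w\ast\tilde w$ is supported on $[-\upsilon,\upsilon]$ and $\sigma\ge 2\upsilon>\upsilon$, so the $\delta_{\pm\sigma}$ terms contribute nothing. By Condition~\ref{cond: w positive}, $\EV{w\ast\tilde w}\ge0$, so we may discard $-\int|x|\,\EV{w\ast\tilde w}$ and replace $\1_{[-\sigma,\sigma]}$ by $1$. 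Using $\int w\ast\tilde w=1$ and $\EV{w\ast\tilde w(0)}=\EV{\norm{w}_2^2}\le\beta$, we obtain
\[ \EV{\norm{F_w}_2^2}\le \sigma+2c+2c^2\beta. \]

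Then CSB in the form~\eqref{eq:CSB result}, together with Condition~\ref{cond: def alpha}, gives
\[ (\sigma+2c)^2\le(\sigma+2c+2c^2\beta)(\sigma+\alpha). \]
Expanding, this becomes
\[ \sigma\,(2c-2c^2\beta-\alpha)+4c^2-2c\alpha-2c^2\alpha\beta\le 0. \]
The key choice is $c=1/(2\beta)$, which maximizes $2c-2c^2\beta$ at the value $1/(2\beta)$. The inequality then reads
\[ \sigma\Big(\frac1{2\beta}-\alpha\Big)+\frac1{\beta^2}-\frac{3\alpha}{2\beta}\le 0. \]
If $\alpha\beta\le1/2$, the first term is nonnegative and the second is strictly positive, since $\tfrac{3}{2}\alpha\beta<1$. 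This contradicts the inequality, so $\alpha\beta>1/2$, and therefore $2\sqrt{\alpha\beta}>\sqrt2$.

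The main obstacle is the justification of the endpoint masses. They must be handled so that the lower bound $\EV{\langle F_w,t\rangle}\ge\sigma+2c$ is legitimate, which needs $\EV{t\ast w}\ge1$ at $x=0$ and $x=\sigma$ and hence continuity. They must also be handled so that the self-interaction of each mass correctly produces the $2c^2\beta$ term while the cross-interaction between the two masses vanishes because $\sigma>\upsilon$. Once those points are checked, the rest is algebra.
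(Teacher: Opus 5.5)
Your proof is correct and follows the paper's argument. You use the same test function (the interval indicator plus Dirac masses at both endpoints), the same Adjoint Formula and CSB chain, continuity only to get $\EV{t\ast w}\ge 1$ at the endpoints, and the same bound $\sigma+2c+2c^2\beta$ on $\EV{\norm{f\ast\tilde w}_2^2}$. The remaining differences are cosmetic: the paper normalizes to $\sigma=2\upsilon$, uses endpoint weight $\alpha$ rather than your $1/(2\beta)$, and keeps part of the $-|x|$ term through an auxiliary constant instead of discarding it, but either weight already forces $\alpha\beta>1/2$.
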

\begin{proof}
By Lemma~\ref{lem:big L} we may assume $\sigma=2\upsilon$. Let $f(x)\coloneq \1_{(0,2\upsilon)} + \alpha\delta(x) + \alpha\delta(x-2\upsilon)$, where $\delta$ is the Dirac delta. Fix $\varepsilon_1>0$ and let $\varepsilon_2$ be such that $\EV{t\ast w}\ge (1-\varepsilon_2)\1_{(-\varepsilon_1,2\upsilon+\varepsilon_1)}$ pointwise; as $\EV{t\ast w}$ is continuous and at least 1 on $(0,2\upsilon)$, we may take $\varepsilon_2\to 0$ as $\varepsilon_1\to 0$. Then
\begin{align*}
    (2\upsilon + 2\alpha)^2 &= \angleb{f,\1_{(-\varepsilon_1,2\upsilon+\varepsilon_1)}}^2 \\
    &\le (1-\varepsilon_2)^{-2}\EV{\angleb{f,w\ast t}}^2 & \text{\ref{cond: t*w >1}} \\
    &= (1-\varepsilon_2)^{-2}\EV{\angleb{f\ast\tilde{w},t}}^2 & \text{(Adjoint Formula)} \\
    &\le (1-\varepsilon_2)^{-2}\EV{\norm{f\ast\tilde{w}}_2^2}\EV{\norm{t}_2^2} & \text{(CSB)} \\
    &\le (1-\varepsilon_2)^{-2}\EV{\angleb{f\ast\tilde{w},f\ast\tilde{w}}}(\alpha+2\upsilon) & \text{\ref{cond: def alpha}} \\
    &= (1-\varepsilon_2)^{-2}\EV{\angleb{f\ast\tilde{f},w\ast\tilde{w}}}(\alpha+2\upsilon) & \text{(Adjoint Formula).}
\end{align*}
Now, for $x\in (-\upsilon,\upsilon)$, we have
\[ f\ast\tilde{f}(x) = 2\upsilon+2\alpha-\abs{x} + 2\alpha^2\delta(x) \]
by a direct computation. Let $c\in [0,1]$ be such that
\[ \EV{\int_{-\upsilon}^\upsilon (c\upsilon-\abs{x})(w\ast\tilde{w}(x))\;dx}=0. \]
Note that $c$ exists, as this integral is clearly nonpositive when $c=0$ and nonnegative when $c=1$, and it is continuous (in fact, affine) in $c$. Then, using conditions~\ref{cond: w normalized} and~\ref{cond: def beta},
\begin{align*}
    \EV{\angleb{f\ast\tilde{f},w\ast\tilde{w}}} &= \EV{\int_{-\upsilon}^\upsilon \left((2\upsilon+2\alpha - c\upsilon) + (c\upsilon - \abs{x}) + (2\alpha^2\delta(x))\right)(w\ast\tilde{w}(x))\; dx} \\
    &= \EV{(2\upsilon+2\alpha - c\upsilon){\underbrace{\left(\int w\right)}_{=1\text{ by~\ref{cond: w normalized}}}}^2} + \underbrace{\EV{\int_{-\upsilon}^\upsilon (c\upsilon-\abs{x})(w\ast\tilde{w}(x))\;dx}}_{=0\text{ by definition of $c$}} 
     + 2\alpha^2\EV{w\ast\tilde{w}(0)} \\
    &\le (2-c)\upsilon+2\alpha + 2\alpha^2\beta.
\end{align*}
Thus, taking $\varepsilon_1,\varepsilon_2\to 0$, we have
\[ 4(\alpha+\upsilon)^2\le (\alpha+2\upsilon)((2-c)\upsilon+2\alpha+2\alpha^2\beta). \]
Solving for $\alpha\beta$ gives
\begin{align*}
    \alpha\beta&\ge \frac{4(\alpha+\upsilon)^2-(2-c)\upsilon(\alpha+2\upsilon) - 2\alpha(\alpha+2\upsilon)}{2\alpha(\alpha+2\upsilon)} \\
    &= \frac{c\upsilon}{2\alpha} + \frac{\alpha+\upsilon}{\alpha+2\upsilon} \\
    &>1/2
\end{align*}
since $\alpha,\upsilon>0$ and $c\ge 0$, as desired.
\end{proof}
We remark that this bound is unoptimized. A different choice of $f$ could give a better bound. Additionally, the extreme case for this argument is when $c=0$, but this cannot happen for any witness, so the bound can be improved there with more careful analysis.

Finally, we handle the quantity $(\alpha+\upsilon)\beta$ occurring in Theorem~\ref{thm:Largest Gap}.
\begin{theorem}\label{thm:Delta lb}
Let $(\upsilon,\alpha,\beta)$ be a WT-triple. Then $(\alpha+2\upsilon)\beta\ge 3$ and $(\alpha+\upsilon)\beta\ge 2$, and both of these inequalities are tight.
\end{theorem}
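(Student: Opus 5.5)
The plan is to test Condition~\ref{cond: t*w >1} against point masses. Since $w$ is supported on a window of length $\upsilon$, a point mass $\delta_p$ gives $\langle \delta_p, t\ast w\rangle=\langle \tilde w(\cdot-p),t\rangle$, which only sees $t$ on $[p-\upsilon,p]$. Placing point masses so that these blocks are disjoint and apply CSB block by block should turn the $\alpha$- and $\beta$-conditions into the claimed inequalities. For tightness I will use the witness $w=\1_{[0,1]}$, $t=\1_{[-1,\sigma]}$ with $\upsilon=1$. Here $\int w=1$, $\norm{w}_2^2=1$, $w\ast\tilde w=\Lambda\ge0$, $\norm{t}_2^2=\sigma+1$, $t=1$ on $(0,\sigma-1)$, and $t\ast w=1$ on $(0,\sigma)$. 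So $(1,1,1)$ is a WT-triple, and it gives $(\alpha+2\upsilon)\beta=3$ and $(\alpha+\upsilon)\beta=2$.

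\emph{Step 1: $(\alpha+2\upsilon)\beta\ge3$.} Lemma~\ref{lem:big L} lets me take $\sigma=2\upsilon$ exactly. Put $f=\delta_{\varepsilon}+\delta_{\upsilon}+\delta_{2\upsilon-\varepsilon}$ with small $\varepsilon>0$, so all three atoms lie in $(0,\sigma)$. Then $\EV{\langle f,t\ast w\rangle}\ge3$. By the Adjoint Formula this equals $\EV{\langle f\ast\tilde w,t\rangle}$, and $f\ast\tilde w=\sum_p \tilde w(\cdot-p)$ is a sum of three translates of $\tilde w$, supported on $[p-\upsilon,p]$. For $\varepsilon=0$ these blocks are $[-\upsilon,0],[0,\upsilon],[\upsilon,2\upsilon]$: disjoint, and they tile $[-\upsilon,\sigma]=\supp t$. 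In that case $\norm{f\ast\tilde w}_2^2=3\norm{w}_2^2$, and CSB followed by CSB in expectation gives
\[9\le \EV{\langle f\ast\tilde w,t\rangle}^2\le 3\,\EV{\norm{w}_2^2}\,\EV{\norm t_2^2}\le 3\beta(2\upsilon+\alpha),\]
which is the claim. The same argument with $n+1$ atoms and $\sigma=n\upsilon$ gives $(n\upsilon+\alpha)\beta\ge n+1$ for every $n\ge2$. The case $n=1$ is excluded by $\sigma\ge2\upsilon$, which is why Step~2 is needed.

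\emph{Step 2: $(\alpha+\upsilon)\beta\ge2$.} Here I use only the two ends of the table. Testing against an atom near $0^+$ gives $1\le \EV{\langle \tilde w,t\1_{[-\upsilon,0]}\rangle}$. By CSB twice, $L\coloneqq\EV{\int_{-\upsilon}^0t^2}\ge1/\beta$. Symmetrically, an atom near $\sigma^-$ gives $R\coloneqq\EV{\int_{\sigma-\upsilon}^{\sigma}t^2}\ge1/\beta$. Since $t=1$ on $(0,\sigma-\upsilon)$, Condition~\ref{cond: def alpha} gives $L+(\sigma-\upsilon)+R\le\EV{\norm t_2^2}\le\sigma+\alpha$. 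Hence $2/\beta\le \alpha+\upsilon$.

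\emph{Main obstacle.} The delicate point is that Condition~\ref{cond: t*w >1} holds only on the open interval $(0,\sigma)$, so the atoms must sit at $\varepsilon$ and $\sigma-\varepsilon$ rather than at the endpoints. This makes the blocks overlap, or miss the end of $\supp t$, by an amount of order $\varepsilon$. I would handle this in one of two ways. The first is to apply CSB to each block separately, writing $\langle \tilde w(\cdot-p),t\rangle\le\norm w_2\norm{t\1_{[p-\upsilon,p]}}_2$, and then use that $\sum_p\int_{[p-\upsilon,p]}t^2\le\norm t_2^2+\int_{\text{overlaps}}t^2$, where the overlap intervals have total length $O(\varepsilon)$. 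Since $w,t$ are integrable and have finitely many atoms, the overlap term tends to $0$ as $\varepsilon\to0$. The second is to replace each atom by a normalized indicator of width $\varepsilon$ inside $(0,\sigma)$, which gives the same estimates up to $o(1)$. If this limiting argument runs into integrability issues, I would fall back on the approximation by bounded witnesses mentioned before the previous theorem.
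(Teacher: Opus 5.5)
Your proof is correct. Step~1 is essentially the paper's argument: reduce to $\sigma=2\upsilon$ by Lemma~\ref{lem:big L}, test against three point masses spaced $\upsilon$ apart, and apply CSB.

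The only difference in Step~1 is the boundary handling. The paper puts the outer atoms just outside, at $-\varepsilon_1$ and $2\upsilon+\varepsilon_1$, so the translates of $\tilde w$ are exactly disjoint. It pays for this with continuity of $\EV{t\ast w}$, obtained via an approximation by bounded witnesses. You put the atoms inside and apply CSB blockwise, which is cleaner than you suggest. The overlaps $[0,\varepsilon]$ and $[\upsilon-\varepsilon,\upsilon]$ lie where $t\equiv 1$, so the overlap term is exactly $2\varepsilon$. You therefore need neither an absolute-continuity argument nor your fallback. (An absolute-continuity argument would in any case require $t\in L^2$, which comes from Condition~\ref{cond: def alpha}, not from integrability.)

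Step~2 is a genuinely different route. The paper excises the constant middle $(0,\upsilon)$ of $t$ to build a shortened table $t'$. It then verifies $\EV{w\ast t'}\ge 1$ on $(0,\upsilon)$ using $\int w=1$, and reruns a two-atom version of Step~1 with $\norm{t'}_2^2=\norm{t}_2^2-\upsilon$. You instead get $L,R\ge 1/\beta$ directly from the two edge evaluations and add them, using $t\equiv 1$ on $(0,\sigma-\upsilon)$.

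Since $w\ast t'(0)=w\ast t(0)$ and $w\ast t'(\upsilon)=w\ast t(2\upsilon)$, the paper's argument in the limit collapses to yours. Your version drops the auxiliary $t'$ and works for any $\sigma\ge 2\upsilon$ without Lemma~\ref{lem:big L}.

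Your blockwise viewpoint also makes Step~1 redundant. The middle atom contributes only $1\le\norm{w}_2\sqrt{\upsilon}$, i.e.\ $\upsilon\beta\ge 1$. Hence $(\alpha+2\upsilon)\beta=(\alpha+\upsilon)\beta+\upsilon\beta\ge 2+1=3$ follows from Step~2 alone.
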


\begin{proof}
Let $(\sigma;w,t)$ be a witness to $(\upsilon,\alpha,\beta)$. We may approximate $(\sigma;w,t)$ by a sequence of uniformly bounded witnesses whose associated WT-triples approach $(\upsilon,\alpha,\beta)$, so for the rest of the proof, we will just assume $\EV{t\ast w}$ is continuous.

By Lemma~\ref{lem:big L}, we may assume $\sigma=2\upsilon$. Choose $\varepsilon_1>0$. Define $\varepsilon_2>0$ so that $\EV{t\ast w}\ge (1-\varepsilon_2)\1_{[-\varepsilon_1,2\upsilon+\varepsilon_1]}$ pointwise; since $\EV{t\ast w}$ is continuous and at least 1 on $[0,2\upsilon]$, $\varepsilon_2\to 0$ as $\varepsilon_1\to 0$. Let $f(x)\coloneq\delta(x+\varepsilon_1)+\delta(x-\upsilon)+\delta(x-2\upsilon-\varepsilon_1)$ where $\delta$ is the Dirac delta. Then
\begin{align*}
    9 &= \angleb{f,\1_{(-2\varepsilon_1,2\upsilon+2\varepsilon_1)}}^2 \\
    &\le (1-\varepsilon_2)^{-2}\EV{\angleb{f,t\ast w}}^2 & \text{\ref{cond: t*w >1}} \\
    &= (1-\varepsilon_2)^{-2}\EV{\angleb{f\ast\tilde{w},t}}^2 & \text{(Adjoint Formula)} \\
    &\le (1-\varepsilon_2)^{-2}\EV{\norm{f\ast\tilde{w}}_2^2}\EV{\norm{t}_2^2} & \text{(CSB)} \\
    &= 3(1-\varepsilon_2)^{-2}\EV{\norm{w}_2^2}\EV{\norm{t}_2^2} \\
    &\le 3(1-\varepsilon_2)^{-2}\beta(\alpha+2\upsilon) & \text{\ref{cond: def alpha} and~\ref{cond: def beta}.}
\end{align*}
Sending $\varepsilon_1,\varepsilon_2\to 0$, we find $\beta(\alpha+2\upsilon)\ge 3$, the first inequality we needed to show.

Let
\[ t'(x)\coloneq \begin{cases}
    t(x) & x<0 \\
    t(x+\upsilon) & x \ge 0,
\end{cases} \]
which is formed from $t$ by cutting out the interval $(0,\upsilon)$ (which is constant 1). Note for $x\in (0,\upsilon)$ that
\begin{align*}
    2&\le \EV{w\ast t(x) + w\ast t(x+\upsilon)} \\
    &= \EV{w\ast t'(x) + w\ast\1_{(0,\upsilon)}} \\
    &= \EV{w\ast t'(x)} + 1,
\end{align*}
so $\EV{w\ast t'(x)}\ge 1$ for $x\in (0,\upsilon)$. Let now $f'(x)\coloneq \delta(x+\varepsilon_1) + \delta(x-\upsilon-\varepsilon_1)$. Then
\begin{align*}
    4 &= \angleb{f',\1_{(-2\varepsilon_1,\upsilon+2\varepsilon_1)}}^2 \\
    &\le \dots & \text{(similar steps to before)} \\
    &\le 2(1-\varepsilon_2')^{-2}\EV{\norm{w}_2^2}\EV{\norm{t'}_2^2} \\
    &\le 2(1-\varepsilon_2')^{-2}\beta(\alpha+\upsilon)
\end{align*}
where $\varepsilon_2'$ is defined analogously to $\varepsilon_2$. Hence $\beta(\alpha+\upsilon)\ge 2$.

Both of these inequalities are tight since $(1,1,1)$ is a WT-triple.
\end{proof}

Thus, Corollary~\ref{cor:largest gap} cannot be improved by finding a better WT-triple than $(1,1,1)$. However, we have noted that the statement of Theorem~\ref{thm:Largest Gap} is not the best possible that can be obtained with our methods.

\section{The optimization problem}\label{sec:optimization}

We turn now to the practical problem of computing WT-triples $(\upsilon,\alpha,\beta)$. 
In light of Lemma~\ref{lem:tau slide}, two scale-invariant quantities organize the search:
\begin{align*}
	k_0 &\coloneq 4\big(1+\upsilon\sqrt{\beta/\alpha}\big)^2, &
	b &\coloneq 2\sqrt{\alpha\beta}.
\end{align*}
We are primarily interested in the Pareto frontier of minimal $(k_0,b)$. 
In the setting of Corollary~\ref{cor:Sidon}, where we aim to find an explicit bound, we need $k_0<30$, since the cases $k\le 29$ are handled by the exhaustive computation of Distributed.net~\cite{distributed.net} for $k\le 28$.

Our basic strategy is as follows. 
For each of several values of $\upsilon$ we search for (random) functions $w,t$ satisfying the constraints of Definition~\ref{def:WT-triple}, and then read off $\alpha=\EV{\norm{t}_2^2}-2\upsilon$ and $\beta=\EV{\norm{w}_2^2}$. 
The challenge is, for fixed $\upsilon$, to find $w,t$ making $b$ small. 
This will be done by writing the relevant optimization problem as (essentially) a biconvex optimization problem, which is then solved by a modified proximal alternating minimization scheme.

\subsection{Setup}
\label{sec:setup}

Our functions $w$ and $t$ will be random step functions. For a particular rational number $\upsilon$, we set $\sigma\coloneq 2\upsilon$. The width of the steps in $w$ and $t$ is $\delta\coloneq \upsilon/N$, where $N$ is another chosen parameter. The heights of the steps of $w$ and $t$ are determined by vectors $\vec{g}$ and $\vec{h}$, respectively, which are to be optimized. Finally, $w$ and $t$ are probabilistically chosen, so there are actually $K$ different $\vec{g}^{(i)}$ and $\vec{h}^{(i)}$ vectors which are chosen with probabilities collected in another vector $\vec{p}$ of length $K$. We index vectors starting from 0.

Specifically, let $\vec{g}_i=\angleb{g_0^{(i)},\dots,g_{N-1}^{(i)}}$; then
\[ w_i(x)\coloneq\sum_{j=0}^{N-1}g_j^{(i)}\1_{[j\delta, (j+1)\delta)}(x). \]
Note that $w_i$ is supported on $[0,\upsilon)$. Next, let $\vec{h}^{(i)}=\angleb{h_0^{(i)},\dots,h_{3N-1}^{(i)}}$ and
\[ t_i(x)\coloneq\sum_{j=0}^{3N-1}h_{3N-1-j}^{(i)}\1_{[j\delta,(j+1)\delta)}(x+\upsilon), \]
which is supported on $[-\upsilon,2\upsilon)=[-\upsilon,\sigma)$. Note that the vector $\vec{h}^{(i)}$ is mirrored compared to $t_i$. This is because we care about the convolution $w\ast t$; it is helpful to mirror either $w$ or $t$ so that this convolution may be computed as a dot product of the associated vectors.

We define $w$ and $t$ so that $w=w_i$ and $t=t_i$ with probability $p_i$, where $\vec{p}=\angleb{p_0,\dots,p_{K-1}}$. Finally, we may gather all of the $\vec{g}^{(i)}$ and $\vec{h}^{(i)}$ vectors into matrices $G$ and $H$ of size $K\times N$ and $K\times 3N$, respectively, so that $G_{ij}\coloneq \vec{g}_j^{(i)}$ and $H_{ij}\coloneq \vec{h}_j^{(i)}$, where again we index starting from 0; this is useful for the purposes of vectorizing some of the constraints for faster solving.

\subsection{Constraints}

Let us now convert the conditions in Definition~\ref{def:WT-triple} on $w,t$ into conditions on $G,H$. Note that $w$ and $t$ are supported on $[0,\upsilon]$ and $[-\upsilon,\sigma]$, as required.

\begin{enumerate}[label=(\roman*)]
    \item The condition $t(x)=1$ for $x\in(0,\sigma-\upsilon)$ means that we must enforce
    \begin{equation}\label{eq:condition0}
        H_{ij}=1\qquad\text{for $N\le j\le 2N-1$ and all $i$}.
    \end{equation}
    \item The condition $\int w=1$ translates to the condition
    \begin{equation}\label{eq:condition1}
        \sum_{j=0}^{N-1}g_j^{(i)}\cdot\delta=1\qquad\text{for all $i$}.
    \end{equation}
    \item The condition $\EV{\norm{t}_2^2}\le \sigma+\alpha$ is what defines the value $\alpha$, which contributes to the objective function. We define
    \[ A\coloneq -\sigma + \sum_{i=0}^{K-1}p_i\norm{\vec{h}^{(i)}}_2^2\cdot\delta. \]
    \item Similarly, the condition $\EV{\norm{w}_2^2}\le \beta$ is what defines the value $\beta$, which also contributes to the objective function. We define
    \[ B\coloneq \sum_{i=0}^{K-1}p_i\norm{\vec{g}^{(i)}}_2^2\cdot\delta. \]
    \item Instead of the condition $\EV{w\ast \tilde{w}(x)}\ge 0$ for $x\in\RR$, we instead use the stronger condition $w(x)\ge 0$; that is, we will only find strong WT-triples with our optimization. The condition translates to
    \begin{equation}\label{eq:condition2}
        G_{ij}\ge 0\qquad\text{for all $i,j$}.
    \end{equation}
    \item Finally, the condition $\EV{w\ast t(x)}\ge 1$ for $x\in (0,\sigma)$ is translated as follows. Note that $w\ast t$ is piecewise linear and continuous with knuckles at $\delta j$ where $j$ is an integer. Thus, if $\EV{w\ast t(x)}\ge 1$ at just the points $x=0,\delta,2\delta,\dots, \sigma$, then $\EV{w\ast t(x)}\ge 1$ for all $x\in (0,\sigma)$. Therefore this condition translates to
    \begin{equation}\label{eq:condition3}
        \tr(\diag(\vec{p})G\cdot H[j,j+N-1]^T)\cdot\delta\ge 1\qquad\text{for $0\le j\le 2N$},
    \end{equation}
    where $H[a,b]$ is the submatrix of $H$ between and including columns $a$ and $b$ and $\diag(\vec{p})$ is the $K\times K$ matrix whose diagonal entries are the entries of $\vec{p}$ and whose off-diagonal entries are 0. To see that the left-hand side of~\eqref{eq:condition3} computes $\EV{w\ast t(x)}$, note that the $(i,i)$ entry of $G\cdot H[j,j+N-1]^T$ is equal to $\vec{g}^{(i)}\cdot \vec{h}^{(i)}[j,j+N-1]$, which is $w_i\ast t_i(j\delta)/\delta$ (recalling that $\vec{h}^{(i)}$ is mirrored compared to $t_i$).
\end{enumerate}
Finally, we have the additional constraints
\begin{equation}\label{eq:condition4}
    p_i\ge 0\qquad\text{for all $i$}
\end{equation}
and
\begin{equation}\label{eq:condition5}
    \sum_{i=0}^{K-1}p_i=1,
\end{equation}
so that $\vec{p}$ defines a probability distribution.

We do not optimize $2\sqrt{AB}$ (as might be most natural, as the quantity $b\coloneqq 2\sqrt{\alpha\beta}$ is what we are most interested in), or similar, but instead the sum $A+B$. This is because $\sqrt{AB}$ (or $AB$) is not convex in the variables $\vec{p}$ if one holds $G$ and $H$ constant, but $A+B$ is. Lemma~\ref{lem:tau slide} makes this an inexpensive decision, as we can rescale our witnesses after the fact; optimizing $A+B$ is almost just as good as optimizing $2\sqrt{AB}$.

\subsection{Biconvex formulation}
\label{sec:biconvex}

As it stands, the objective function $A+B$ and the constraints are actually triconvex in the variable sets $\{\vec{p},G,H\}$, meaning that if one holds two of these three variable sets constant, the optimization problem is a convex program in the third variable set. We found much more success by modifying the constraints so that one may optimize $\vec{p}$ and $G$ holding $H$ constant, or optimize $\vec{p}$ and $H$ holding $G$ constant, rather than the natural triconvex formulation. This reformulation may be done as follows.

Define the matrix $G'$ so that $G'_{ij}\coloneq \vec{p}_iG_{ij}$ and $H'$ so that $H'_{ij}\coloneq \vec{p}_iH_{ij}$. Then all of the conditions~\eqref{eq:condition0} through~\eqref{eq:condition3} may be straightforwardly rewritten either in the variable sets $\{G',H\}$ or $\{G,H'\}$; in particular, condition~\eqref{eq:condition3} becomes either
\[ \tr(G'\cdot H[j,j+N-1]^T)\cdot\delta\ge 1 \]
or
\[ \tr(G\cdot H'[j,j+N-1]^T)\cdot\delta\ge 1 \]
which are biconvex in $\{G',H\}$ and $\{G,H'\}$, respectively.

The definition of $A$ using $H'$ instead of $H$ becomes a sum of quadratic-over-linear expressions:
\[ A \coloneqq -\sigma + \sum_{i=0}^{K-1}\frac{\norm{H'_i}_2^2}{p_i}\cdot \delta \]
where $H'_i$ is the $i$th row of $H'$. Likewise,
\[ B \coloneqq \sum_{i=0}^{K-1}\frac{\norm{G'_i}_2^2}{p_i}\cdot \delta \]
is a sum of quadratic-over-linear expressions. These are convex expressions in the variable sets $\{\vec{p},H'\}$ and $\{\vec{p},G'\}$, respectively, while the original definitions are convex in $\vec{p}$ if one holds either $H$ or $G$ constant.

With this reformulation, a convex solver may be used to optimize $\{\vec{p},G'\}$ holding $H$ constant, or optimize $\{\vec{p},H'\}$ holding $G$ constant. Alternating between these two convex programs forms the bulk of our optimization procedure, but there are a few additional complications.

\subsection{Growing and shrinking \texorpdfstring{$K$}{K}}
\label{sec:growing}

As described so far, our optimization procedure is relatively sensitive to the initial choice of $G$ and $H$, and different choices lead to better or worse resulting bounds on $b$. However, there is a way to guess new vectors $\vec{g}$ and $\vec{h}$ such that adding them to the ensemble with positive probability improves the objective value. With this ability, one may start with some $G$ and $H$ with a small value of $K$ ($K=1$ for instance), and after the biconvex optimization stalls, increase the value of $K$ by 1 by adding a new row to $G$ and $H$ according to the rule we will describe. In this way, we obtain consistently good results without having to come up with a good initial choice of $G$ or $H$; we just used a scaled indicator of an interval for the initial value of each row of $G$.

To do this, we compute
\begin{equation}\label{eq:f def}
\tilde{f}\coloneq \sum_{i=0}^{K-1} p_i\cF^{-1}\left[\frac{\cF[\vec{h}^{(i)}]}{\cF[\vec{g}^{(i)}]}\right]
\end{equation}
where $\cF$ is the discrete Fourier transform, computed using SciPy's implementation~\cite{virtanen2020scipy}. Here, the division is done pointwise; one might worry that this could lead to a division by 0, but this never occurs in practice. This vector $\tilde{f}$ is approximately zero outside of an interval of length $2N+1$; we truncate to this interval to obtain vector $\vec{f}$.

Next, we find $\vec{g}$ that maximizes $\norm{\vec{f}\ast \vec{g}\cdot\delta}_2^2\cdot\delta - \norm{\vec{g}}_2^2\cdot\delta$ (where the convolution outputs a new vector of length $3N$) and append $\vec{g}$ as a new row in $G$, and increment $K$. The problem of maximizing this quantity is not a convex problem, but is instead \emph{convex-concave}. This means we are not able to find a global solution efficiently, so we just find a good local solution. Finally, we continue the biconvex optimization by first optimizing $\{\vec{p},H'\}$ holding $G$ constant, noting that this optimization step will add a new row to $H$ and a new entry in $\vec{p}$ as well.

The reason that this works was explained previously in Section~\ref{sec:lowerbounds}. This $f$ is one such that the CSB bound~\eqref{eq:CSB result} nearly an equality, i.e., this choice makes $f\ast \tilde{w}\approx t$.

Finally, it frequently occurs that some window-table pairs in the ensemble are chosen with extremely low probability. We may delete the corresponding entry of $\vec{p}$ and rows of $G$ and $H$ before continuing with the optimization. This has little effect on the reported bound, and it keeps the problems small, which helps the performance of the convex solvers. Through these growing and shrinking processes, the value of $K$ tended to fluctuate between 6 and 11; our final witness happened to use an ensemble of size 6.

\subsection{Rounding and further technical details}
\label{sec:rounding}

We employed two convex solvers for different stages of the optimization procedure. First, we used the solver \pn{CLARABEL}~\cite{Clarabel_2024} for fast, low-precision solves for both the alternating minimization procedure described in Section~\ref{sec:biconvex} and solving the convex-concave program in Section~\ref{sec:growing}. The relevant convex and convex-concave programs were modeled using \pn{CVXPY}~\cite{diamond2016cvxpy,agrawal2018rewriting} and its extension \pn{DCCP}~\cite{shen2016disciplined}.

However, this only gives functions $w,t$ that approximately satisfy the required conditions. In order to find an exact solution, we used the high-precision solver \pn{SDPA-GMP}~\cite{SDPA-GMP} in the last step. Specifically, if we have a high-quality approximate solution $G$ (in floating point), we first round every entry of $G$ to a rational number with fixed denominator, then nudge the values slightly so that condition~\ref{cond: w normalized} is satisfied. Next, we formulate the relevant convex program that optimizes $\{\vec{p},H'\}$ as a semidefinite program (SDP), using the following conversion for quadratic-over-linear expressions to a semidefinite cone constraint:
\[ \frac{\norm{\vec{a}}_2^2}{b}\le c\qquad\Leftrightarrow\qquad\begin{bmatrix}
    c & \vec{a}^T \\
    \vec{a} & bI
\end{bmatrix}\text{ is positive semidefinite}. \]
A similar conversion is found in the MOSEK cookbook~\cite{cookbook}, Section 6.2.5. The other constraints are straightforward to convert to linear and semidefinite cone constraints.

Before solving with \pn{SDPA-GMP}, we make one more modification. Let $\varepsilon$ be the chosen solver tolerance; we used $\varepsilon=10^{-30}$. Then we replace each inequality constraint $X\ge Y$ with the constraint $X\ge (1+\varepsilon)Y$ before solving with \pn{SDPA-GMP}. This makes the solution obtained be extremely close to an exact solution; due to the solver tolerance, the obtained solution will be guaranteed to actually satisfy $X\ge Y$, which is what we require. The cost of this trick is that \pn{SDPA-GMP} will emit an error status as it (rightfully) is unable to determine whether or not the program has a feasible point, and the obtained solution will have a relatively large duality gap compared to the solver tolerance, indicating that the solver failed to optimize to completion. However, the duality gap is still much smaller than what is required for our purposes, and solving all the way to optimality would not visibly affect the reported numbers.

In the obtained (floating point) solution, we convert everything to rational numbers (now with quite large denominators), compute $H_{ij}=H'_{ij}/p_i$, and finally replace $\vec{p}$ by $\vec{p}/\norm{p}_1$ to normalize it, since the equality constraint $\sum p_i=1$ was not modified to counteract the solver tolerance. The resulting rational triple $\{\vec{p},G,H\}$ is then checked to make sure it satisfies all of the requirements exactly.

A flowchart outlining the complete optimization process is drawn in Figure~\ref{fig:flowchart}. The complete technical details, code, and instructions are available in the GitHub~\url{https://github.com/dcartermath/sidon-error}.

We also mention here that the (nonconvex) solver \pn{IPOPT}~\cite{IPOPT} was very helpful in earlier attempts at optimizing WT-triples, though it does not feature in the final procedure. 

\begin{figure}[htbp!]
    \centering
    \scalebox{0.8}{
    \begin{tikzpicture}[node distance=0.75cm]
        \node (start) [startstop] {Start};
        
        \node (input) [longio, below=of start.south] {Choose parameters $\upsilon$, $N$, $K_{\text{min}}$, $\varepsilon$.};
        
        \node (init) [longprocess, below=of input.south] {Set $K\gets K_{\text{min}}$ and initialize $G$.};
        
        \node (optH) [longprocess, below=of init.south] {Optimize $\{\vec{p},H'\}$ given $G$ using \pn{CVXPY} and \pn{CLARABEL}.};
        
        \node (optG) [longprocess, below=of optH.south] {Optimize $\{\vec{p},G'\}$ given~$H$ using \pn{CVXPY} and \pn{CLARABEL}.};
        
        \node (mainloop) [longdecision, below=of optG.south] {Did $2\sqrt{\alpha\beta}$ decrease compared to the last iteration?};
        
        \coordinate [left=of mainloop.west] (phantom1);
        
        \node (secondloop) [longdecision, right=of mainloop.east] {Is $2\sqrt{\alpha\beta}$ below the desired bound?};
        
        \node (extend) [longprocess, above=of secondloop.north] {Set $K\gets K+1$. Compute new row $\vec{g}$ using \pn{CVXPY}, \pn{DCCP}, and \pn{CLARABEL} and append it to $G$.};

        \path let \p1 = (extend), \p2 = (optH) in node (cleanupdec) [decision] at (\x1,\y2) {Is any $p_i<\varepsilon / K$?};

        \node (cleanup) [longprocess, above=of cleanupdec.north] {Delete that entry from $\vec{p}$ and the corresponding rows from $G$ and $H$, and set $K\gets K-1$.};

        \coordinate [below right=of cleanup.east] (phantom2);

        % \coordinate [above right=of extend.east] (phantom2);

        \node (highacc) [longprocess, right=of extend.east] {Optimize $\{\vec{p},H'\}$ to very high precision using \pn{SDPA-GMP}.};

        \node (roundG) [process, below=of highacc.south] {Round $G$.};

        \node (roundpH) [process, above=of highacc.north] {Round $\vec{p}$ and $H$.};

        \node (verify) [longprocess, above=of roundpH.north] {Verify $\vec{p},G,H$ satisfy the necessary constraints.};

        \node (output) [io, above=of verify.north] {Output $\vec{p},G,H$.};

        \node (end) [startstop, above=of output.north] {End};

        \draw [arrow] (start) -- (input);
        \draw [arrow] (input) -- (init);
        \draw [arrow] (init) -- (optH);
        \draw [arrow] (optH) -- (optG);
        \draw [arrow] (optG) -- (mainloop);
        \draw [arrow] (mainloop) -- node[midway, anchor=south] {Yes} (phantom1) |- (optH);
        \draw [arrow] (mainloop) -- node[midway, anchor=south] {No} (secondloop);
        \draw [arrow] (secondloop) -- node[midway, anchor=west] {No} (extend);
        \draw [arrow] (extend) -- (cleanupdec);
        \draw [arrow] (cleanupdec) -- node[pos=0.2, anchor=south] {No} (optH);
        \draw [arrow] (cleanupdec) -- node[midway, anchor=west] {Yes} (cleanup);
        \draw [arrow] (cleanup) -| (phantom2) |- (cleanupdec);
        \draw [arrow] (secondloop.east) -| node[pos=0.1, anchor=south] {Yes} (roundG);
        \draw [arrow] (roundG) -- (highacc);
        \draw [arrow] (highacc) -- (roundpH);
        \draw [arrow] (roundpH) -- (verify);
        \draw [arrow] (verify) -- (output);
        \draw [arrow] (output) -- (end);
    \end{tikzpicture}
    }
    \caption{The optimization process for finding WT-triples. The processes in the first column are described in greater detail in Sections~\ref{sec:setup} through~\ref{sec:biconvex}, the processes in the second column is described in Section~\ref{sec:growing}, and those in the third column are described in Section~\ref{sec:rounding}.}
    \label{fig:flowchart}
\end{figure}

\section{Unresolved Questions}\label{sec:questions}

\subsection{The nature of WT-triples}

Our best WT-triples are witnessed by random asymmetric functions, and experimentally, restricting to deterministic witnesses or symmetric witnesses yields worse bounds. There is strong evidence for the following, though we lack an intuitive explanation:
\begin{conjecture}
Let $x_1$ be the infimum of $2\sqrt{\alpha\beta}$ over all WT-triples $(\upsilon,\alpha,\beta)$, $x_2$ be the infimum over those triples with deterministic witnesses, and $x_3$ be the infimum over those triples with symmetric witnesses. Then $x_1<x_2$ and $x_1<x_3$.
\end{conjecture}
Note that there are multiple interpretations for what ``symmetric'' should mean; for instance, it could mean that every atom of $w,t$ is symmetric, or that $w$ and $t$ are symmetric in expectation.

Though we obtained a lower bound on $2\sqrt{\alpha\beta}$, there remains a gap between this lower bound and our best known WT-triple. We suspect our examples are close to best-possible:
\LowerBound

Thus, obtaining significantly better bounds on the diameter of Sidon sets must probably be done by another method. Perhaps further improvements will have to come by studying the algebraic properties of dense Sidon sets, instead of the purely analytic approach taken in this paper.

\subsection{The nature of Sidon sets and Golomb rulers}

The shape of the function $f$ in Figure~\ref{fig:f} suggests that extremely high-quality Sidon sets ought to be significantly denser near their ends than in the middle. The best-known constructions do not have this property.
\begin{question}
Are there better constructions of Sidon sets (or Golomb rulers) that put a much higher density of elements near the ends compared to the middle?
\end{question}

We suspect that similar methods to our work in the proof of Theorem~\ref{thm:ud} could give explicit bounds for the maximum size of a $\gamma$-Golomb ruler in a union of intervals, or more generally address the following question:
\begin{question}
Let $X\subseteq[0,1]$ be a measurable set, and denote by $N\cdot X$ the dilation of $X$ by a factor $N$. How large is the largest $\gamma$-Golomb ruler contained in $(N\cdot X)\cap \ZZ$, as $N\to\infty$? 
\end{question}
The literature contains results under the assumption that $X$ is a union of two intervals~\cite{1990.Abbott,2022.Riblet} or an arbitrary number of intervals~\cite{2010.Cilleruelo}, though as we noted in Section~\ref{sec:applications}, the latter result is not correct. We suspect that extending our techniques to the case that $X$ is a union of two intervals would improve some constants in Theorems~\ref{thm:ud} and~\ref{thm:Largest Gap} by a factor of $\sqrt{2}$:

\conjud*

In Theorem~\ref{thm:Largest Gap} we gave an upper bound on the size of the largest gap in a Sidon set. There is no known superlinear lower bound on this quantity, and it may be that the shape of the bound in Theorem~\ref{thm:Largest Gap} is optimal:
\begin{conjecture}
Let $\cA=\{a_1<\dots<a_k\}$ be a $\gamma$-Golomb ruler with $\diam(\cA)\le (k^2-bk^{3/2})/\gamma$. Then there is a constant $c=c(b)>0$ such that
\[ \max_{1\le i<k}(a_{i+1}-a_i)\ge c\frac{k^{3/2}}{\gamma}-o(k^{3/2}). \]
\end{conjecture}

Our methods seem ill-equipped to address questions about algebraic properties of dense Sidon sets. For instance, it ought to be the case that approximately half of the elements in a dense Sidon set are odd and half are even, but our method ``smears out'' the set, so that any information about such detailed features is destroyed.
\begin{problem}
Develop methods which may be used to study algebraic properties of dense Sidon sets.
\end{problem}

\section{Acknowledgments}

The first author is supported by NSF grant DGE-2444107. We thank Bogdan Georgiev, Javier Gomez Serrano, and Adam Zsolt Wagner for their work on AlphaEvolve, and we thank those three as well as Zach Hunter for discussions early on in this research project.

\printbibliography

\clearpage

\appendix

\section{Special WT-triples and witnesses}\label{sec:special triples}

In this appendix, we only use $\sigma=2\upsilon$, and we describe a witness for a WT-triple $(\upsilon,\alpha,\beta)$ by giving the pair $(w,t)$ explicitly, following the convections from Section~\ref{sec:setup}.
Specifically, $w$ and $t$ are random step functions on the uniform grid of width $\delta = \upsilon/N$. The heights of the steps of $w,t$ are given by $K$ vectors $\vec{g}^{(i)}$ (of length $N$, indexed from $0$) and $\vec{h}^{(i)} $ (of length $3N$, indexed from $0$). Additionally, if there are $K$ options for $w,t$, then we give a vector $\vec{p}$ of probabilities. The exact data, given as rational numbers, is available on the GitHub repository \url{https://github.com/dcartermath/sidon-error}.

Note that the verification a witness $(\sigma;w,t)$ actually satisfies the properties needed to witness $(\upsilon,\alpha,\beta)$ is a straightforward check; code for this check is also available in the GitHub repository. The verification code is a straightforward translation from the definition of WT-triple witnesses.

Recall also the derived quantities
    \[ b \coloneqq 2\sqrt{\alpha\beta}, \qquad k_0 \coloneqq 4\bigl(1+\upsilon\sqrt{\beta/\alpha}\bigr)^2, \qquad \Delta \coloneqq \beta(\upsilon+\alpha), \]
which are invariant under the transformation of Lemma~\ref{lem:tau slide}.

\subsection{The Classic WT-triple}
\label{sec:ET triple}
The methods developed in this work began by reformulating the Erd\H{o}s-Tur\'an argument with the following WT-triple, which witnesses $\Delta=2$, the best possible by Theorem~\ref{thm:Delta lb}.
\begin{claim}[Erd\H{o}s-Tur\'an]\label{claim:ET}
The triple $(\upsilon,\alpha,\beta)=(1,1,1)$ is a WT-triple.
\end{claim}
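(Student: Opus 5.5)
To show that $(1,1,1)$ is a WT-triple, we exhibit an explicit deterministic witness modelled on the proof of Theorem~\ref{thm:ET}. Take $\upsilon=1$ and $\sigma=2$, which satisfies $\sigma\ge 2\upsilon$. For the window we use $w\coloneqq\1_{[0,1)}$, and for the table we use $t\coloneqq\1_{[-1,2)}$. Both functions are nonnegative, so the witness would in fact show that $(1,1,1)$ is a \emph{strong} WT-triple. Their supports are $[0,1]=[0,\upsilon]$ and $[-1,2]=[-\upsilon,\sigma]$, as the definition requires.

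We then check the six conditions of Definition~\ref{def:WT-triple} in order.
\begin{enumerate}[label=(\roman*),noitemsep]
\item On $(0,\sigma-\upsilon)=(0,1)$ we have $t=1$.
\item $\int w=1$.
\item $\norm{t}_2^2=3=\sigma+1$, so $\alpha=1$ works.
\item $\norm{w}_2^2=1$, so $\beta=1$ works.
\item $w\ast\tilde w=\Lambda\ge 0$.
\item $w\ast t(x)=\abs{[x-1,x]\cap[-1,2]}$, and for $x\in(0,2)$ the interval $[x-1,x]$ lies in $[-1,2]$, so this equals $1$.
\end{enumerate}
Since the witness is deterministic, all the expectations in the definition are just the values above.

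None of these steps is a real obstacle. The only point needing a moment's care is condition (vi), where we must make sure the whole interval $[x-1,x]$ stays inside the support of $t$ for every $x\in(0,\sigma)$. That is exactly why $t$ is extended one unit to the left of $0$, down to $-\upsilon=-1$.
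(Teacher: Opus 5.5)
Your proof is correct and is essentially the paper's own: the same deterministic witness $\sigma=2$, $w=\1_{[0,1]}$, $t=\1_{[-1,2]}$ (half-open versus closed endpoints is immaterial), with each condition of the definition checked directly, and both note that nonnegativity makes the triple strong. Your description of condition (vi) as $w\ast t(x)=\abs{[x-1,x]\cap[-1,2]}$ is in fact cleaner than the paper's explicit piecewise formula for $w\ast t$, whose ramp pieces contain minor slips that do not affect the conclusion.
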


\begin{proof}
Let $(\upsilon,\alpha,\beta)=(1,1,1)$, so that $\sigma=2\upsilon=2$, and consider the (deterministic) functions
$w=\1_{[0,1]}$ and $t=\1_{[-1,2]}$. Then $w$ is supported on $[0,\upsilon]$, and $\int w =1$, and $\norm{w}_2^2=1\le \beta$. Further, $w\ast\tilde w$ is nonnegative because $w\ge 0$. We also have that $t$ is supported on $[-\upsilon,\sigma]$, and $t(x)=1$ for $x\in [0,\sigma-\upsilon]$. Also $\norm{t}_2^2 = 3= \sigma+\alpha$.

Finally,
\[ w\ast t(x) = \begin{cases}
 0, & x \le -1; \\
 x, & -1\le x \le 0\\
 1, & 0 \le x \le 2 \\
 4-x, & 2 \le x \le 3 \\
 0 & x\ge 4;
\end{cases}\]
and in particular $w\ast t(x) =1$ for $x\in [0,\sigma]$. Thus, $(w,t)$ witnesses the WT-triple $(1,1,1)$.
\end{proof}

This witness is strong, and it has $b=2$, $k_0=16$ and $\Delta=2$. Every improvement
recorded below is an improvement in $b$; the interest of the remaining two subsections
is that one keeps $k_0$ below $30$ while the other pushes $b$ as low as we can manage without regard to $k_0$.

\subsection{\texorpdfstring{A WT-triple with small $b$ and $k_0<30$}{A WT-triple with small b and k-naught below 30}}
\label{sec:small k0 triple}
We give a witness to a WT-triple with $\upsilon=3$ whose $k_0$ is just under $30$, and whose $b$ is nevertheless within $0.004$ of the best we know.

\begin{claim}\label{claim:smallk0}
The triple
\begin{align*} 
    (\upsilon,\alpha,\beta) 
    &= \left(3,\frac{193779720907921494583047135253}{3\cdot 2^{95}},\frac{12202340518338097915331656565}{9\cdot 2^{91}}\right) \\
    &\approx (3,\ 1.6305627,\ 0.5476096)
\end{align*}
is a strong WT-triple, witnessed by a random witness with $K=5$ atoms.
\end{claim}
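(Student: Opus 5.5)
The plan is to verify Definition~\ref{def:WT-triple} directly for the explicit rational witness stored in the repository. That witness consists of a probability vector $\vec p$ with $K=5$ entries and matrices $G$ ($5\times N$) and $H$ ($5\times 3N$) of rationals. Here $\upsilon=3$, $\sigma=2\upsilon=6$ and $\delta=\upsilon/N$, with $w$ and $t$ built as step functions exactly as in Section~\ref{sec:setup}. Every condition reduces to a finite computation in exact rational arithmetic, so the proof is a certificate check.

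First, the support conditions hold by construction: $w_i$ is supported on $[0,\upsilon)$ and $t_i$ on $[-\upsilon,\sigma)$.

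Next, I check the remaining conditions in order.
\begin{enumerate}[label=(\alph*)]
\item Condition~\ref{cond: t flat}: confirm $H_{ij}=1$ for $N\le j\le 2N-1$ and every $i$, as in~\eqref{eq:condition0}.
\item Condition~\ref{cond: w normalized}: confirm $\delta\sum_j g^{(i)}_j=1$ for each atom separately. This per-atom identity is what the proof of Theorem~\ref{thm:motivation} needs, not merely its expectation.
\item Strength: confirm $G_{ij}\ge 0$ and $H_{ij}\ge 0$ for all entries. Since $w\ge 0$, this also gives Condition~\ref{cond: w positive}.
\item Probabilities: confirm $p_i\ge 0$ and $\sum p_i=1$.
\end{enumerate}

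Then I compute the two exact rationals
\[A=-6+\delta\sum_i p_i\|\vec h^{(i)}\|_2^2,\qquad B=\delta\sum_i p_i\|\vec g^{(i)}\|_2^2,\]
and check that $A$ and $B$ are at most the stated values of $\alpha$ and $\beta$. Since those denominators are powers of $2$ times $3$ or $9$, I expect equality, or at least an exact comparison of fractions. This covers Conditions~\ref{cond: def alpha} and~\ref{cond: def beta}.

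The main step is Condition~\ref{cond: t*w >1}, which asks that $\EV{w\ast t(x)}\ge 1$ for every $x\in(0,6)$. Each $w_i\ast t_i$ is a convolution of step functions on the common grid $\delta\ZZ$, so it is continuous and piecewise linear with breakpoints only at grid points. Hence $\EV{w\ast t}$ is piecewise linear with the same breakpoints. It therefore suffices to check the value at each grid point $x=j\delta$ with $0\le j\le 2N$, because a linear function on each cell is at least $1$ if it is at least $1$ at both endpoints. These values are exactly the left sides of~\eqref{eq:condition3}, $\delta\,\tr(\diag(\vec p)\,G\,H[j,j+N-1]^T)$, after accounting for the mirroring of $\vec h$. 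So I evaluate $2N+1$ rational inner products and confirm each is $\ge 1$.

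The main obstacle is not conceptual. It is making sure the indexing convention (the reversed $\vec h$ and the offset by $\upsilon$) really matches $w\ast t(j\delta)$, and carrying out the arithmetic exactly on rationals with very large denominators. The SDPA-GMP step's $(1+\varepsilon)$ slack is what makes all the inequalities hold exactly after rounding, so I expect no failures, but any failure would show up here.
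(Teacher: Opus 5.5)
Your proposal is correct and is essentially the paper's own proof. The paper points to the exact rational data $(\vec p,G,H)$ in the repository and to a verification script that checks every condition of the definition in exact arithmetic. Condition (vi) is handled, as in your plan, through the piecewise linearity of $\EV{w\ast t}$, which reduces it to the $2N+1$ grid-point inequalities~\eqref{eq:condition3}. Keep your explicit check that $H_{ij}\ge 0$: the optimization only imposes $G_{ij}\ge 0$, and strongness also requires a nonnegative table.
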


Consequently $b \le 1.8898801$ and $k_0 \le 29.9986982 < 30$.

\begin{proof}
The witness $(3,w,t)$ is depicted in Figures~\ref{fig:b30-w} (for $w$) and~\ref{fig:b30-t} (for $t$). The exact values are available in \url{https://github.com/dcartermath/sidon-error/tree/main/explicit}, and the fact that these functions witness the relevant WT-triple is verified by \url{https://github.com/dcartermath/sidon-error/blob/main/verify.py}. The most interesting property, that $\EV{w\ast t}$ is at least 1 on the interval $[0,\sigma]$, is shown visually in Figure~\ref{fig:b30-Ewstart pics}.
\end{proof}

\begin{figure}[htbp!]
    \centering
    \includegraphics{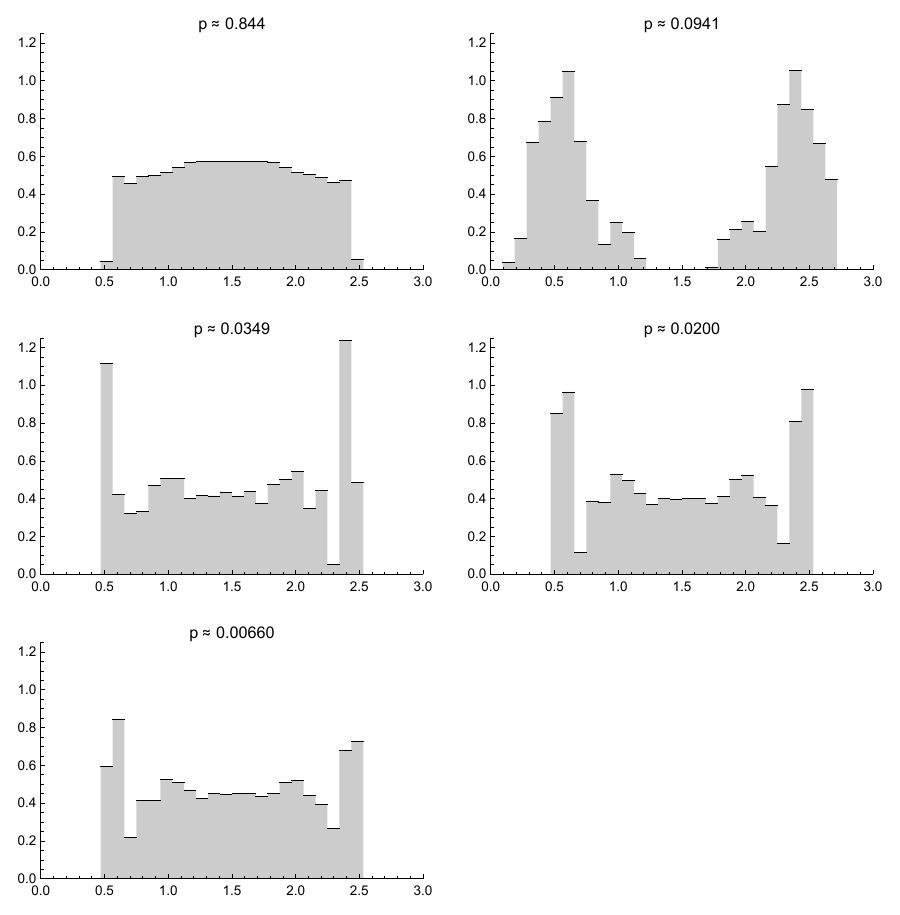}
    \caption{The window for the five atoms of a witness of $(3,1.6305627,0.5476096)$.}
    \label{fig:b30-w}
\end{figure}

\begin{figure}[htbp!]
    \centering
    \includegraphics{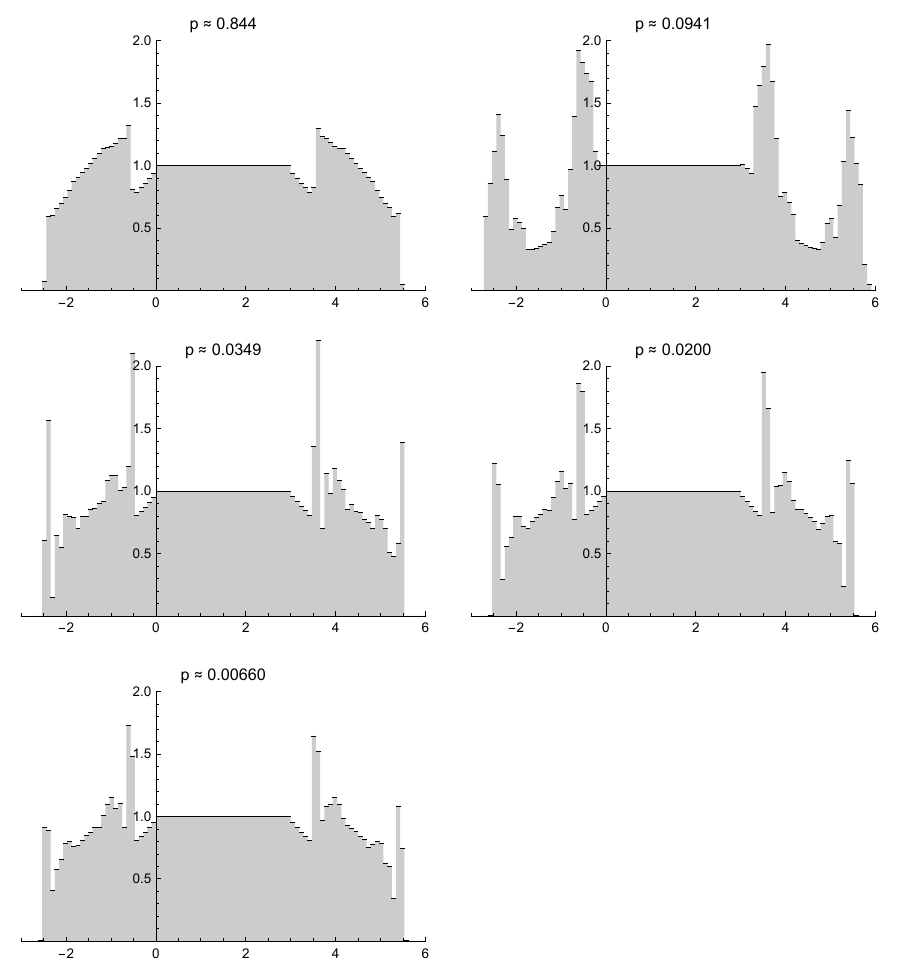}
        \caption{The table for the five atoms of a witness of $(3,1.6305627,0.5476096)$.}
    \label{fig:b30-t}
\end{figure}

\begin{figure}[htbp!]
    \centering
    \includegraphics{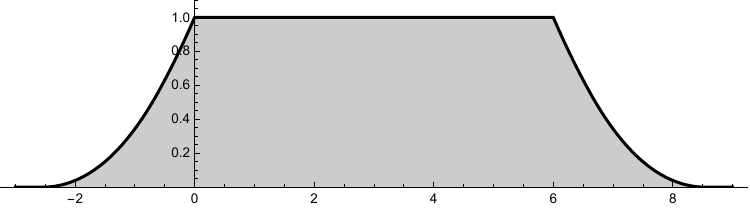}
    \caption{$\EV{w\ast t}$ for the witness to $(3,1.6305627,0.5476096)$.}
    \label{fig:b30-Ewstart pics}
\end{figure}

\subsection{The WT-triple with smallest \texorpdfstring{$b$}{b}}
\label{sec:best triple}
Allowing $k_0$ to grow, we can do better in $b$. The best triple we have found is the
following; note that the gain over Claim~\ref{claim:smallk0} in $b$ is small, and is
paid for with a much larger $k_0$.

\begin{claim}\label{claim:bestb}
The triple
\[ (\upsilon,\alpha,\beta) = (10,\alpha_0,\beta_0 )
\approx (10,\ 0.944814,\ 0.941259) \]
is a WT-triple, where
\[ \alpha_0 = {\scriptstyle\frac{\begin{array}{r}24534927811575735074232072732714727979451900637084863352122\\
68095831410090107899954872437715855867237470270581061955550\\
70756187862800085525830942972173964247560024026270654818106\\
70864505766255992189006831208767880323636547985259235591445\\
13867197246825055699735993342348910137052272035579462663787
\end{array}}{\begin{array}{r}25968024015819018585255749413962888050874260032839104696162\\
07527338283601181039792274031559691376644227380078582776222\\
47583366127645288213594143153013277322919052741726652238862\\
15485406618227891413397956115521280961039716282295125506672\\
24642669840915160749996375246335775289250052685000000000000
\end{array}}} \]
and
\[ \beta_0 = \frac{1772130384572488116897712811968136530510904755768535954960651}{1882725390027397260273972602940280969126497650532485399838720}. \]
\end{claim}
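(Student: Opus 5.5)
The claim is a finite verification: $(10,\alpha_0,\beta_0)$ is a WT-triple once we exhibit a witness $(\sigma;w,t)$ meeting Definition~\ref{def:WT-triple}. I will use the output of the optimization in Section~\ref{sec:optimization}. Concretely, $\sigma=2\upsilon=20$, $N$ is the grid parameter, $\delta=\upsilon/N$, there are $K=6$ atoms, and the data are rational $\vec p$, $G$, $H$ taken from the repository. The witness is the random step function defined in Section~\ref{sec:setup}. By construction $w_i$ is supported on $[0,10)$ and $t_i$ on $[-10,20)$, so the support requirements hold automatically. Every remaining check is exact rational arithmetic, as for the explicit witness of the Erd\H{o}s--Tur\'an triple.

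The checks, in order, are these.
\begin{enumerate}
\item Confirm $p_i\ge 0$ and $\sum p_i=1$ exactly.
\item For condition~\ref{cond: t flat}, confirm $H_{ij}=1$ for $N\le j\le 2N-1$, which is \eqref{eq:condition0}.
\item For condition~\ref{cond: w normalized}, confirm $\delta\sum_j g^{(i)}_j=1$ for every atom $i$ separately, which is \eqref{eq:condition1}. Each atom must integrate to $1$, not just the average.
\item For condition~\ref{cond: w positive}, confirm all $G_{ij}\ge 0$. Then $w\ge0$ forces $w\ast\tilde w\ge0$, so the witness is in fact strong provided $t\ge0$ too, which is also checked entrywise.
\item For conditions~\ref{cond: def alpha} and~\ref{cond: def beta}, compute $\delta\sum_i p_i\|\vec h^{(i)}\|_2^2-20$ and $\delta\sum_i p_i\|\vec g^{(i)}\|_2^2$ exactly. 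Confirm they are at most $\alpha_0$ and $\beta_0$ respectively; in fact I expect them to equal $\alpha_0$ and $\beta_0$, which is where those huge fractions come from.
\end{enumerate}

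The only non-finitary step is condition~\ref{cond: t*w >1}, since it asks for $\EV{w\ast t(x)}\ge1$ on the whole continuum $(0,20)$. Reduce it to finitely many inequalities as follows. Each $w_i\ast t_i$ is a convolution of two step functions on the same grid of mesh $\delta$, so it is continuous and piecewise linear with breakpoints only at multiples of $\delta$. The same holds for the convex combination $\EV{w\ast t}$. A piecewise linear function is bounded below on each segment by the smaller of its endpoint values. Therefore it suffices to check $\EV{w\ast t}(j\delta)\ge1$ for $0\le j\le 2N$. Each such value equals $\delta\,\tr(\diag(\vec p)\,G\,H[j,j+N-1]^T)$, using the mirrored indexing of $\vec h$, which gives exactly \eqref{eq:condition3}.

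I expect the main obstacle to be bookkeeping rather than mathematics. The mirroring convention has to be handled carefully, so that the trace formula really computes $w_i\ast t_i(j\delta)$ at the correct grid points including both endpoints. Inequality \eqref{eq:condition3} is tight up to the $10^{-30}$ slack, so this comparison must be done in exact rational arithmetic; floating point is not reliable there. That exact check is what the verification script does, and the claim then follows.
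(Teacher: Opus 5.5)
Your proposal is correct and is essentially the paper's proof: exhibit the rational six-atom witness $(20;w,t)$ from the repository and check Definition~\ref{def:WT-triple} in exact arithmetic, which is what the paper's verification script does, with condition~\ref{cond: t*w >1} reduced to the grid points $j\delta$ by piecewise linearity as in Section~\ref{sec:optimization}. On the mirroring bookkeeping you flagged: with the paper's indexing, $\delta\,\tr(\diag(\vec p)\,G\,H[j,j+N-1]^T)$ actually equals $\EV{w\ast t}((2N-j)\delta)$ rather than $\EV{w\ast t}(j\delta)$, but $j\mapsto 2N-j$ permutes $\{0,\dots,2N\}$, so the finite check is unaffected.
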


Consequently $b \le 1.88607$.

\begin{proof}
The witness $(10,w,t)$ is depicted in Figures~\ref{fig:b-w} (for $w$) and~\ref{fig:b-t} (for $t$). The exact values are available in \url{https://github.com/dcartermath/sidon-error/tree/main/asymptotic}, and the fact that these functions witness the relevant WT-triple is verified by \url{https://github.com/dcartermath/sidon-error/blob/main/verify.py}. The most interesting property, that $\EV{w\ast t}$ is at least 1 on the interval $[0,\sigma]$, is shown visually in Figure~\ref{fig:b-Ewstart pics}.
\end{proof}

\begin{figure}[htbp!]
    \centering
    \includegraphics{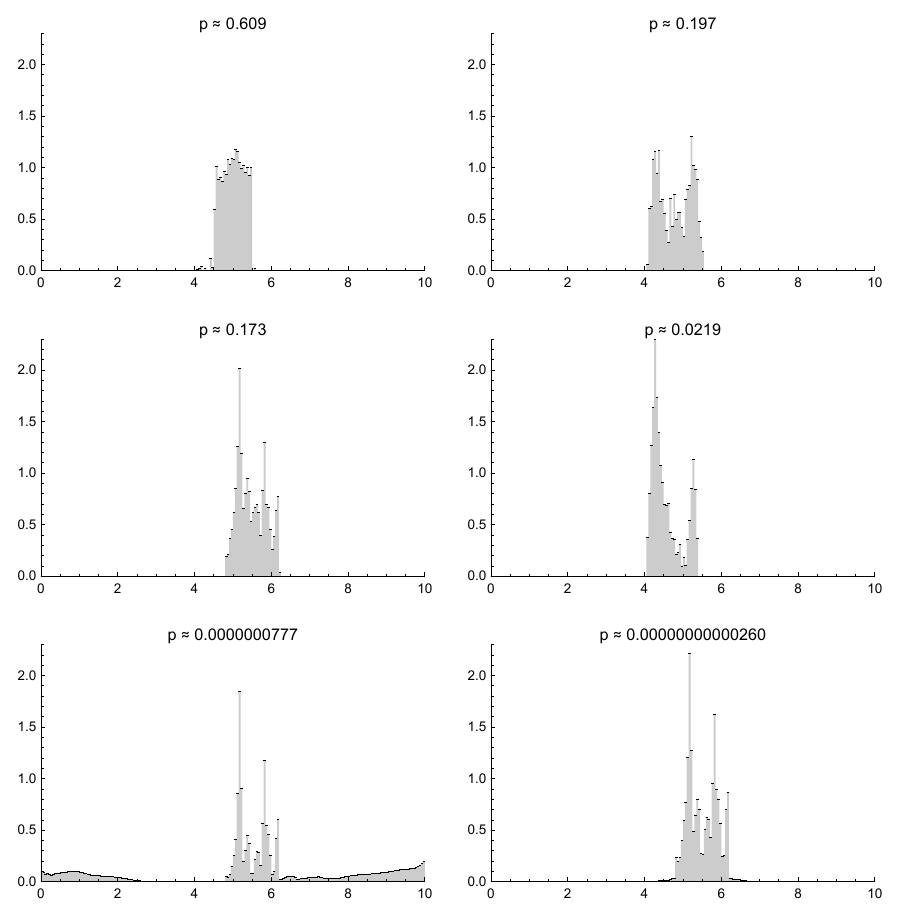}
    \caption{The window for the six atoms of a witness of $(10,0.944814,0.941259)$.}
    \label{fig:b-w}
\end{figure}

\begin{figure}[htbp!]
    \centering
    \includegraphics{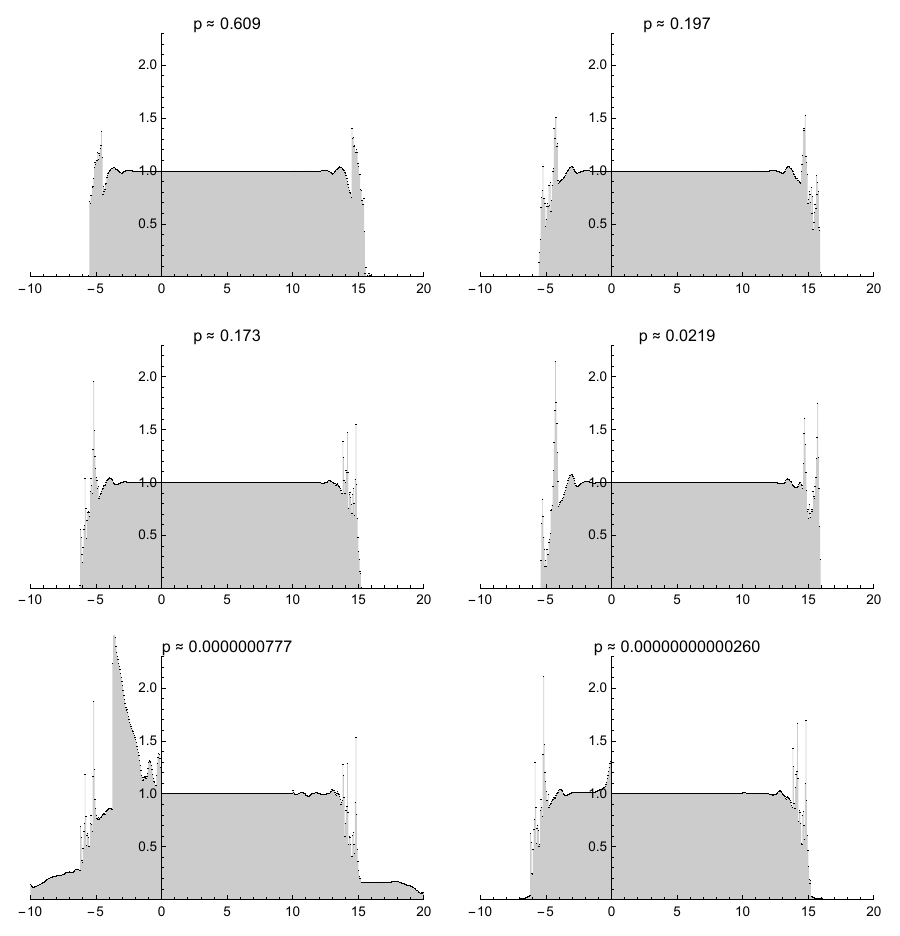}
        \caption{The table for the six atoms of a witness of $(10,0.944814,0.941259)$.}
    \label{fig:b-t}
\end{figure}

\begin{figure}[htbp!]
    \centering
    \includegraphics{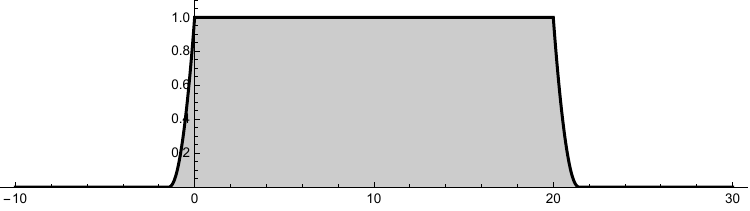}
    \caption{$\EV{w\ast t}$ for the witness to $(10,0.944814,0.941259)$.}
    \label{fig:b-Ewstart pics}
\end{figure}

\end{document}